\documentclass[11pt]{amsart}
\addtolength{\oddsidemargin}{-.5in}
\addtolength{\evensidemargin}{-.5in}
\addtolength{\textwidth}{1.0in} 
\usepackage{amsthm}

\theoremstyle{plain}
\newtheorem{thm}{Theorem}[section]
\newtheorem{theorem}[thm]{Theorem}

\newtheorem{lemma}[thm]{Lemma}

\newtheorem{proposition}[thm]{Proposition}
\theoremstyle{definition}
\newtheorem{remark}[thm]{Remark}

\newtheorem{definition}[thm]{Definition}

\numberwithin{equation}{section}




 \title[Strong submeasures, intersection of currents, dynamical systems]{Strong submeasures and several applications}
 \author{Tuyen Trung Truong}
   \address{Department of Mathematics, University of Oslo, Blindern 0851 Oslo, Norway}
  \email{tuyentt@math.uio.no}
    \date{\today}
    \keywords{Meromorphic maps; Positive closed currents; Pullback and Pushforward; Strong submeasure; Variational principle; Wedge intersection}
   \subjclass[2010]{32-XX, 37-XX, 14-XX}

\begin{document}
\maketitle
{\centering\footnotesize To my daughter on her birthday occasion\par}

\begin{abstract}
A strong submeasure on a compact metric space X is a sub-linear and bounded operator on the space of continuous functions on X. A strong submeasure is positive if it is non-decreasing. By Hahn-Banach theorem, a positive strong submeasure is the supremum of a non-empty collection of measures whose masses are uniformly bounded from above. 

We give several applications of strong submeasures in various diverse topics, thus illustrate the usefulness of this classical but largely overlooked notion. The applications include: 

- Pullback and pushforward of all measures by meromorphic selfmaps of compact complex varieties. 

- The existence of invariant  positive strong submeasures for meromorphic maps between compact complex varieties, a notion of entropy for such submeasures (which coincide with the classical ones in good cases) and a version of the Variation Principle.  

- Intersection of every positive closed (1,1) currents on compact K\"ahler manifolds. Explicit calculations are given for self-intersection of the current of integration of some curves $C$ in a compact K\"ahler surface where the self-intersection in cohomology is negative. 

All of these points are new and have not been previously given in work by other authors. In addition, we will apply the same ideas to entropy of transcendental maps of $\mathbb{C}$ and $\mathbb{C}^2$. 
\end{abstract}

\section{Introduction} In dynamical systems and ergodic theory, measures play a crucial role. At least since H. Poincar\'e, the first fundamental step for studying the dynamics of a continuous map $f:X\rightarrow X$ of a compact metric space $X$ is to construct invariant probability measures, that is those measures $\mu$ for which $f_*(\mu )=\mu$, and in particular those with measure entropy equal to the topological entropy. A way to construct invariant measures is to start from a positive measure $\mu _0$, and then to consider any cluster points of the Cesaro's average $\frac{1}{n}\sum _{j=0}^n(f_*)^j(\mu _0)$. To this end, a crucial property is that we can pushforward a probability measure by a continuous map and obtain another probability measure, and that this pushforward is linear on the space of measures. There is also the fundamental result \cite{goodwyn, goodman}, called the Variational Principle, which relates measure entropies of invariant measures of a compact metric space and the topological entropy of the map. When we work with compact complex varieties, usually it is impossible to construct dynamically interesting holomorphic maps $f:X\rightarrow X$, and one must be willing to deal with dominant meromorphic maps $f:X\dashrightarrow X$ in order to go forward. We can still define a notion of entropy for meromorphic maps, and when $X$ is K\"ahler \cite{dinh-sibony3} relates this topological notion with geometrical/cohomological information (called dynamical degrees) of the map. However, only for very special maps which ideas from dynamics of diffeomorphisms of compact Riemann manifolds are applicable so far to the study of meromorphic maps. For these special meromorphic maps, one can construct very special invariant measures $\mu$ with no mass on proper analytic sets, which can be pushed forward. (Here, we recall this pushforward for the reader's convenience. Let $\mu$ be a probability measure with no mass on proper analytic subsets of $X$. Then we define $f_*(\mu )$ as the extension by zero of the probability measure $(f|_{X\backslash I(f)})_*(\mu )$, where $I(f)$ is the indeterminacy set of $f$ and hence $f$ is holomorphic on $X\backslash I(f)$.) For the majority of meromorphic maps, however, there is no obvious such special invariant measures, and one faces difficulty in constructing interesting invariant measures by Cesaro's average as above, since the cluster points of these measures (even if the starting measure $\mu _0$ has no mass on proper analytic subsets) are not guaranteed to have no mass on proper analytic subsets. The problem is then that we really do not know how to pushforward a measure with support in the indeterminacy set $I(f)$ of $f$. For example, if $x_0\in I(f)$, then typically its image under the map $f$ will be of positive dimension, and there is no reasonable way to define the pushforward $f_*(\delta _{x_0})$, of the Dirac measure at $x_0$, as a {\bf measure}. Still, we can ask the following questions: Can we define instead the pushforward $f_*(\delta _{x_0})$ as something else more general than a measure? More importantly, can we hope to obtain some analog of the fundamental results mentioned above for all meromorphic maps? 

Besides the construction using Cesaro's average as above, when $X$ is K\"ahler, there is one other approach of constructing invariant measures, very related to special properties of compact K\"ahler manifolds, by intersecting dynamically interesting special positive closed currents (so-called Green's currents).  Intersection of positive closed currents, in particular those of bidegrees $(1,1)$, is an interesting topic itself with many applications in complex geometry and complex dynamics, and has been intensively studied. The methods employed so far by most researchers in this topic are local in nature, and the resulting intersections are supposed to be positive measures. These local methods also usually provide answers which are not compatible with intersection in cohomology, and the latter is a consideration one needs to take into account in order for the definition to be meaningful. However, again here not much is known about what to do if the currents to intersect are too singular. For example, is there any meaning to assign to self-intersection of the current of integration on a line $L\subset \mathbb{P}^2$?  For an even more interesting example, is there any meaning to assign  to self-intersection of the current of integration on a curve $C$ in a compact K\"ahler surface whose self-intersection in cohomology is $\{C\}.\{C\}=-1$? 

 One key question is that for dynamics of dominant meromorphic maps, should we take more consideration of the indeterminacy set of the map as well as of its iterates (this latter point has been so far not very much pursued by the works in the literature)? Likewise, in considering intersection of positive closed currents, should we  look closer at the singular parts of the currents involved, instead of throwing them away (as in the local approaches mentioned above)? 

In this paper, we give several applications of strong submeasures, a classical but largely overlooked notion, to these questions. In addition, the same ideas can be used for dynamics of transcendental functions on $\mathbb{C}$ and $\mathbb{C}^2$. 

\subsection{Strong submeasures} Let $X$ be a compact metric space. Denote by $\varphi \in C^0(X)$ the sup-norm $||\varphi ||_{L^{\infty}}=\sup _{x\in X}|\varphi (x)|$. We recall that by Riesz representation theorem (see \cite{rudin}), on $X$ a measure of bounded mass  $\mu$ is the same as a {\bf linear} operator $\mu: C^0(X)\rightarrow \mathbb{R}$, that is $\mu (\lambda _1\varphi _1+\lambda _2\varphi _2)=\lambda _1\mu (\varphi _1)+\lambda _2(\varphi _2)$ for all $\varphi _1,\varphi _2\in C^0(X)$ and constants $\lambda _1,\lambda _2$; which is {\bf bounded}: $|\mu (\varphi )|\leq C||\varphi ||_{L^{\infty}}$ for a constant $C$ independent of $\varphi \in C^0(X)$, and {\bf positive}: $\mu (\varphi )\geq 0$ whenever $\varphi \in C^0(X)$ is non-negative. Note that we can then choose $C=\mu (1)$ (the mass of the measure $\mu $) and by linearity the positivity is the same as having 
\begin{equation}
\mu (\varphi _1)\geq \mu (\varphi _2)
\label{EquationPositivity}\end{equation} 
for all $\varphi _1,\varphi _2\in C^0(X)$ satisfying $\varphi _1\geq \varphi _2$.  For later reference, we denote by $M(X)$ the set of signed measures on $X$ and by $M^+(X)$ the set of positive measures on $X$.

Inspired by this, we now define strong submeasures and positive strong submeasures. We recall that a functional $\mu :C^0(X)\rightarrow \mathbb{R}$ is {\bf sub-linear} if $\mu (\varphi _1+\varphi _2)\leq \mu (\varphi _1)+\mu (\varphi _2)$ and $\mu (\lambda \varphi )=\lambda \mu (\varphi )$ for $\varphi _1,\varphi _2,\varphi \in C^0(X)$ and a non-negative constant $\lambda$. A {\bf strong submeasure} is then simply a sub-linear functional $\mu :C^0(X)\rightarrow \mathbb{R}$ which is also {\bf bounded}, that is there is a constant $C>0$ so that for all $\varphi \in C^0(X)$ we have $|\mu (\varphi )|\leq C||\varphi ||_{L^{\infty}}$. The least such constant $C$ is called the norm of $\mu$ and is denoted by $||\mu ||$. A strong submeasure $\mu$ is {\bf positive} if it is non-decreasing, that is for all $\varphi _1\geq \varphi _2$ we have $\mu (\varphi _1)\geq \mu (\varphi _2)$. It is easy to check that a strong submeasure is Lipschitz continuous $|\mu (\varphi _1)-\mu (\varphi _2)|\leq ||\mu ||\times ||\varphi _1-\varphi _2||_{L^{\infty}}$, and convex $\mu (t_1\varphi _1+t_2\varphi _2)\leq t_1\mu (\varphi _1)+t_2\mu (\varphi _2)$ for $t_1,t_2\geq 0$. We denote by $SM(X)$ the set of all strong submeasures on $X$, and by $SM^+(X)$ the set of all positive strong submeasures on $X$. Note that $M(X)\subset SM(X)$ and $M^+(X)\subset SM^+(X)$. 

By a simple application of Hahn-Banach's extension theorem (see \cite{rudin2}) and Riesz representation theorem (see \cite{rudin}), we have the following characterisation, whose proof is left out, of strong submeasures and positive strong submeasures. 
\begin{theorem} Let $X$ be a compact metric space, and $\mu :C^0(X)\rightarrow \mathbb{R}$ an operator. 

1) $\mu $ is a strong submeasure if and only if there is a non-empty collection $\mathcal{G}$ of signed measures $\chi =\chi ^+-\chi ^-$ where $\chi ^{\pm}$ are measures on $X$ so that $\sup _{\chi =\chi ^{+}-\chi ^-\in \mathcal{G}}\chi ^{\pm}(1)<\infty$, and: 
\begin{equation}
\mu (\varphi )=\sup _{\chi \in \mathcal{G}}\chi (\varphi ), 
\label{EquationExampleSubmeasure}\end{equation}
for all continuous functions $\varphi$.

2) $\mu$ is a positive strong submeasure if and only if there is a non-empty collection $\mathcal{G}$ of (positive) measures on a compact metric space $X$ so that $\sup _{\chi \in \mathcal{G}}\chi (1)<\infty$, and:
\begin{equation}
\mu (\varphi )=\sup _{\chi \in \mathcal{G}}\chi (\varphi ),
\label{EquationExamplePositiveSubmeasure}\end{equation}
for all continuous functions $\varphi$. 
\label{TheoremHahnBanach}\end{theorem}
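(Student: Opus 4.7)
\emph{Proof proposal.} My plan is to prove both parts via a single Hahn--Banach extension performed once for each test function, then extract the dominating measures by Riesz representation. For part~2, fix a positive strong submeasure $\mu$ and $\varphi_0 \in C^0(X)$. On the one-dimensional subspace $\mathbb{R}\varphi_0$, define the linear map $\ell_0(c\varphi_0) = c\,\mu(\varphi_0)$. The needed check is $\ell_0(c\varphi_0) \leq \mu(c\varphi_0)$: for $c \geq 0$ both sides agree by positive homogeneity, and for $c < 0$ sublinearity combined with $\mu(0) = 0\cdot \mu(\varphi_0) = 0$ gives $\mu(-\varphi_0) \geq -\mu(\varphi_0)$, so $\mu(c\varphi_0) = |c|\mu(-\varphi_0) \geq c\,\mu(\varphi_0)$. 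Hahn--Banach then produces a linear extension $\tilde\ell_{\varphi_0} : C^0(X) \to \mathbb{R}$ with $\tilde\ell_{\varphi_0} \leq \mu$ pointwise and $\tilde\ell_{\varphi_0}(\varphi_0) = \mu(\varphi_0)$.

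Boundedness and positivity of $\tilde\ell_{\varphi_0}$ then follow easily. From $\pm \tilde\ell_{\varphi_0}(\varphi) \leq \mu(\pm\varphi) \leq \|\mu\|\cdot \|\varphi\|_{L^\infty}$ the operator norm is at most $\|\mu\|$. For positivity, if $\varphi \geq 0$ then monotonicity of $\mu$ forces $\mu(-\varphi) \leq \mu(0) = 0$, so $-\tilde\ell_{\varphi_0}(\varphi) = \tilde\ell_{\varphi_0}(-\varphi) \leq 0$. Riesz representation identifies $\tilde\ell_{\varphi_0}$ with a positive measure $\chi_{\varphi_0}$ of mass at most $\|\mu\|$. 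Taking $\mathcal{G} = \{\chi_{\varphi_0} : \varphi_0 \in C^0(X)\}$ gives $\chi_{\varphi_0}(\varphi) \leq \mu(\varphi)$ universally, with equality at $\varphi_0 = \varphi$, hence $\mu(\varphi) = \sup_{\chi \in \mathcal{G}} \chi(\varphi)$.

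For part~1 I run the same construction without the monotonicity hypothesis: the extension $\tilde\ell_{\varphi_0}$ remains bounded linear of norm at most $\|\mu\|$, so Riesz gives a signed measure $\chi_{\varphi_0} = \chi_{\varphi_0}^+ - \chi_{\varphi_0}^-$ whose total variation $\chi_{\varphi_0}^+(1) + \chi_{\varphi_0}^-(1)$ equals that operator norm, uniformly bounded by $\|\mu\|$. The converse directions in both parts are mechanical verifications: a pointwise supremum of (signed) measures with uniformly bounded masses inherits sublinearity, boundedness, and, in the positive case, monotonicity from each member of the collection. The only point deserving attention, rather than a genuine obstacle, is the check $\ell_0 \leq \mu$ at negative multiples of $\varphi_0$, where the sublinearity of $\mu$ (not merely convexity) together with the identity $\mu(0) = 0$ coming from positive homogeneity at $\lambda = 0$ are both essential.
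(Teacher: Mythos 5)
Your proof is correct and follows exactly the route the paper indicates (it sketches no proof, saying only that the result is ``a simple application of Hahn--Banach's extension theorem and Riesz representation theorem''): extend the tautological linear functional on $\mathbb{R}\varphi_0$ dominated by the sublinear $\mu$, identify the extension with a (positive, in part~2) measure via Riesz, and take the supremum over all $\varphi_0$. The delicate points --- the domination check at negative multiples using $\mu(0)=0$, the uniform mass bound $\|\chi_{\varphi_0}\|\leq\|\mu\|$, and positivity of the extension from monotonicity of $\mu$ --- are all handled correctly.
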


The next paragraphs discuss the natural topology on the space of strong submeasures. 

\begin{definition} We say that a sequence $\mu _1,\mu _2,\ldots \in SM(X)$ weakly converges to $\mu \in SM(X)$ if $\sup _n||\mu _n||<\infty$ and 
\begin{equation}
\lim _{n\rightarrow \infty}\mu _n(\varphi )=\mu (\varphi )
\label{EquationWeakConvergence}\end{equation} 
for all $\varphi \in C^0(X)$. We use the notation $\mu _n\rightharpoonup \mu$ to denote that $\mu _n$ weakly converges to $\mu$. 
\end{definition}

If $\mu _1,\mu _2:~C^0(X)\rightarrow \mathbb{R}$, we define $\max \{\mu _1,\mu _2\}:~C^0(X)\rightarrow \mathbb{R}$ by the formula $\max \{\mu _1,\mu _2\}(\varphi )=\max \{\mu _1(\varphi ), \mu _2(\varphi )\}$. Theorem \ref{TheoremSubmeasureBasicProperty} shows that submeasures also have properties similar to measures, such as weak compactness.  

For later use, we recall that  for a compact subset $A\subset X$, we have (\cite{doob}):
\begin{equation}
\mu (A)=\inf _{\phi \in C^0(X,\geq 1_A)}\mu (\phi ). 
\label{EquationMeasureClosedSet}\end{equation}
Here $1_A:X\rightarrow \{0,1\}$ is the characteristic function of $A$, that is $1_A(x)=1$ if $x\in A$ and $=0$ otherwise, $C^0(X)$ is the space of continuous functions from $X$ into $\mathbb{R}$, and for any bounded function $g:X\rightarrow \mathbb{R}$ we use the notations 
\begin{equation}
C^0(X, \geq g)=\{\phi \in C^0(X):~\phi \geq g\}. 
\label{EquationContinuousFunctionsBiggerG}\end{equation}
Moreover, for any {\bf open} set $B\subset X$ we have (\cite{doob})
\begin{equation}
\mu (B)=\sup _{A~\mbox{compact}\subset B}\mu (A). 
\label{EquationMeasureOpenSet}\end{equation}

Like measures, {\bf positive} strong submeasures give rise naturally to {\bf set functions}.  On a compact metric space $X$,  recall that a function $g :X\rightarrow \mathbb{R}$ is upper-semicontinuous if for every $x\in X$
\begin{equation}
\limsup _{y\rightarrow x}g (y)\leq g (x).
\label{EquationUpperSemicontinuity}\end{equation}
For example, the characteristic function of a closed subset $A\subset X$ is upper-semicontinuous. By Baire's theorem \cite{baire}, if $g$ is a bounded upper-semicontinuous function on $X$ then the set $C^0(X,\geq g)$ is non-empty and moreover
\begin{eqnarray*}
g=\inf _{\varphi \in C^0(X,\geq g)}\varphi . 
\end{eqnarray*}
More precisely, there is a sequence of continuous functions $g_n$ on $X$ decreasing to $g$. Hence, if $\mu$ is a {\bf measure}, we have by Lebesgue and Levi's monotone convergence theorem in the integration theory that
\begin{eqnarray*}
\mu (g)=\lim _{n\rightarrow\infty}\mu (g_n)=\inf _{\varphi \in C^0(X,\geq g)}\mu (\varphi ). 
\end{eqnarray*}

Inspired by this and (\ref{EquationMeasureClosedSet}), if $\mu$ is an {\bf arbitrary} strong submeasure, we define for any upper-semicontinuous function $g$ on $X$ the value
\begin{equation}
E(\mu) (g):=\inf _{\varphi \in C^0(X,\geq g)}\mu (\varphi )\in [-\infty ,\infty ).
\label{EquationSubmeasureUpperSemicontinuous}\end{equation}
Then for a closed set $A\subset X$, we define $\mu  (A):=E(\mu )(1_{A})$ where $1_A$ is the characteristic function of $A$. If $\mu$ is positive, we always have $\mu (A)\geq 0$. Then, for an open subset $B\subset X$, following (\ref{EquationMeasureOpenSet}) we define $\mu (B):=\sup \{\mu (A):~A$ compact  $\subset B\}$. Denote by $BUS(X)$ the set of all bounded upper-semicontinuous functions on $X$.  Theorem \ref{TheoremSubmeasureUpperSemicontinuous} proves some basic properties of this operator, similar to those of submeasures. 

If we have a positive strong submeasure $\mu$, and define for any Borel set $A\subset X$ the number $\widetilde{\mu}(A)=\inf \{\mu (B):$ $B$ open, $A\subset B\}$, then we see easily from part 4) of Theorem \ref{TheoremSubmeasureUpperSemicontinuous} that: i) $\widetilde{\mu}(\emptyset )=0$, ii) $\widetilde{\mu}(A_1)\leq \widetilde{\mu}(A_2)$ for all Borel sets $A_1\subset A_2$ and iii) 
$\widetilde{\mu}(A_1\cup A_2)\leq \widetilde{\mu}(A_1)+\widetilde{\mu}(A_2)$. Such $\widetilde{\mu}$ are known in the literature as submeasures (see e.g. \cite{talagrand}), and hence it is justified to call our objects $\mu$ positive strong submeasures.  

\subsection{Pushforward of positive strong submeasures}

In this subsection we discuss several results concerning pushforward of positive strong submeasures by dominant meromorphic maps. 

First, we recall that to pushforward a measure by a continuos map $f:X\rightarrow Y$ is the same as pullback continuous functions by $f$. In fact, the pushforward $f_*(\mu )$ of the measure $\mu$ is a measure which acts on continuous functions $\varphi :X\rightarrow \mathbb{R}$ by $f_*(\mu )(\varphi ):= \mu (f^*\varphi )$, where $f^*\varphi :X\rightarrow \mathbb{R}$ is the composition $\varphi \circ f$ and hence is also continuous. 

Now we consider the pushforward of a measure, and more generally a strong submeasure, by a dominant meromorphic map $f:X\dashrightarrow Y$ between compact complex varieties. We would like, as in the above paragraph, to define the pullback $f^*(\varphi )$ of a continuous function $\varphi :Y\rightarrow \mathbb{R}$. However, in general there is no reasonable way to define $f^*(\varphi )$ as a continuous function, because of the existence of  indeterminacy points. In general, $f^*(\varphi )$ is only continuous on an open dense subset of $X$. Fortunately, there is a canonical way to define the pullback $f^*(\varphi )$ as an upper-semicontinuous function, using only the value of $f^*(\varphi )$ on the open dense set where it is continuous. The key to this is the next result. 
\begin{proposition}
Let $X$ be a compact metric space, $U\subset X$ an open dense set, and $g:~U\rightarrow \mathbb{R}$ a bounded upper semicontinuous function. Define $E(g):X\rightarrow \mathbb{R}$ as follows: If $x\in U$ then $E(g)(x):=g(x)$, and if $x\in X\backslash U$ then 
\begin{eqnarray*}
E(g)(x):=\limsup _{y\in U,~y\rightarrow x}g(y).
\end{eqnarray*} 
Then 

1) $E(g)$ is a bounded upper-semicontinuous function, and $E(g)|_U=g$. In other words, $E(g)$ is a bounded upper-semicontinuous extension of $g$. 

2) If $g$ is continuous on $U$, $U_1\subset U$ is another open dense set of $X$ and $g_1=g|_{U_1}$, then $E(g_1)=E(g)$. 

3) Moreover, $E(g_1+g_2)\leq E(g_1)+E(g_2)$ for any $g_1,g_2:U\rightarrow \mathbb{R}$ bounded upper-semicontinuous functions. 
\label{PropositionUpperSemicontinuousExtension}\end{proposition}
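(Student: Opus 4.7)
The plan is to prove the three parts in order. Part (1) reduces to a case-by-case check of upper-semicontinuity, part (2) exploits the continuity hypothesis together with density of $U_1$, and part (3) is the standard subadditivity of $\limsup$.

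For part (1), boundedness is automatic: if $|g|\leq M$ on $U$, then for $x\in X\setminus U$ the value $E(g)(x)$ is a $\limsup$ of numbers in $[-M,M]$, hence lies in $[-M,M]$, and the equality $E(g)|_U=g$ is built into the definition. For upper-semicontinuity at $x\in X$ I split on whether $x$ lies in $U$. If $x\in U$, openness of $U$ yields a neighborhood $W\subset U$ of $x$ on which $E(g)$ coincides with $g$, so upper-semicontinuity of $g$ at $x$ transfers directly to $E(g)$. If $x\in X\setminus U$, take any sequence $y_n\to x$ in $X$: for indices with $y_n\in U$ one has $E(g)(y_n)=g(y_n)$ and these values enter directly into the $\limsup$ defining $E(g)(x)$; for indices with $y_n\in X\setminus U$, the definition of $E(g)(y_n)$ lets me pick $z_n\in U$ with $d(z_n,y_n)<1/n$ and $g(z_n)\geq E(g)(y_n)-1/n$. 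Then $z_n\to x$ through $U$, so $\limsup_n E(g)(y_n)\leq \limsup_n g(z_n)\leq E(g)(x)$, as required.

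For part (2) I use continuity of $g$ on $U$ together with density of $U_1$ in $X$. If $x\in U\setminus U_1$, then continuity of $g$ at $x$ combined with density of $U_1$ produces a sequence $z_n\in U_1$ with $z_n\to x$ and $g(z_n)\to g(x)$, giving $E(g_1)(x)\geq g(x)=E(g)(x)$; the reverse inequality follows from $U_1\subset U$ combined with continuity of $g$ at $x\in U$. For $x\in X\setminus U$ the inequality $E(g_1)(x)\leq E(g)(x)$ is trivial from $U_1\subset U$. For the reverse, pick $y_n\in U$ with $y_n\to x$ and $g(y_n)\to E(g)(x)$; continuity of $g$ at each intermediate $y_n\in U$ together with density of $U_1$ let me choose $z_n\in U_1$ with $d(z_n,y_n)<1/n$ and $|g(z_n)-g(y_n)|<1/n$, so $z_n\to x$ with $g(z_n)\to E(g)(x)$, giving $E(g_1)(x)\geq E(g)(x)$.

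Part (3) is then routine: on $U$ both sides equal $g_1+g_2$ by part (1), and on $X\setminus U$ the standard subadditivity
\[
\limsup_{y\in U,\,y\to x}\bigl(g_1(y)+g_2(y)\bigr)\leq \limsup_{y\in U,\,y\to x}g_1(y)+\limsup_{y\in U,\,y\to x}g_2(y)
\]
yields $E(g_1+g_2)(x)\leq E(g_1)(x)+E(g_2)(x)$. The main (mild) obstacle is the diagonalization in part (2) when $x\notin U$: one must notice that although $g$ need not be globally continuous on $X$, the intermediate points $y_n$ lie in $U$ where continuity does hold, so local approximation by points of $U_1$ is legitimate, and one must then check that $z_n\to x$ while preserving the required control on $g(z_n)$.
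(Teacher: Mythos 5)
Your proof is correct and follows essentially the same route as the paper's: the same case split on $x\in U$ versus $x\notin U$ with a diagonal choice of nearby points of $U$ for part (1), density of $U_1$ plus continuity of $g$ on $U$ for part (2), and subadditivity of $\limsup$ for part (3). The only difference is that you spell out the details of part (2), which the paper leaves as an easy check.
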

{\bf Remark.} On the other hand, if $g$ is not continuous on $U$ then it is easy to construct examples for which the conclusion of part 2) in the proposition does not hold.  

Let $f:X\dashrightarrow Y$ be now a dominant meromorphic map. Let $\Gamma _f\subset X\times Y$ be the graph of $f:X\dashrightarrow Y$,  $\pi _X,\pi _Y:~X\times Y\rightarrow X,Y$ be the projections, and $\pi _{X,f},\pi _{Y,f}$ the restrictions of these maps to $\Gamma _f$. Then $\Gamma _f$ is also a compact complex variety, and hence a compact metric space under the induced metric from $X\times Y$, and the holomorphic map $\pi _{X,f}:~\Gamma _f\rightarrow X$ is bimeromorphic. Moreover, since the map $f$ is dominant, we have that the map $\pi _{Y,f}:\Gamma _f\rightarrow Y$ is surjective. Hence, we can find an open dense set $U\subset \Gamma _f$ so that $\pi _{X,f}$ is isomorphic from $U$ onto its image in $X$. 

\begin{definition} Using Proposition \ref{PropositionUpperSemicontinuousExtension}, we define $(\pi _{X,f})_*(\phi )$, where $\phi \in C^0(\Gamma _f)$, to be the upper-semicontinuous function  $E((\pi _{U,f})_*(\phi ))$ on $X$. We emphasise that it is {\bf globally defined} on the whole of $X$, and is not changed if we replace $U$ by one open dense subset of it. 

Similarly, if $\dim (X)=\dim (Y)$, then there is $W\subset Y$ a Zariski dense open subset so that $\pi _{W,f}:\pi _{Y,f}^{-1}(W)\subset \Gamma _f\rightarrow W$ is a finite covering map. Hence for any continuous function $\phi$ on $\Gamma _f$ we easily have that $(\pi _{W,f})_*(\phi )$ is upper-semicontinuous on $Y$ and is independent of the choice of $W$ (see Section \ref{SectionPullbackPushforward}). 
\end{definition}

We have finally a canonical definition of pullback of continuous functions by dominant meromorphic functions. 

\begin{definition} Let $\varphi \in C^0(X)$ and $\psi \in C^0(Y)$. We denote by $f^*(\psi )$ the above upper-semicontinuous function $(\pi _{X,f})_*(\pi _{Y,f}^*(\psi ))$. Moreover, when $\dim (X)=\dim (Y)$, we define also by $f_*(\varphi )$ the above upper-semicontinuous function $(\pi _{Y,f})_*(\pi _{X,f}^*(\varphi ))$.
 \end{definition}

Using Proposition \ref{PropositionUpperSemicontinuousExtension} and the above upper-semicontinuous pushforward of functions,  we can finally define following Theorem \ref{TheoremSubmeasureUpperSemicontinuous}, the following pullback operator $\pi _{X,f}^*:SM^+(X)\rightarrow SM^+(\Gamma _f)$:
\begin{equation}
\pi _{X,f}^*(\mu )(\varphi ):=\inf _{\psi \in C^0(X,\geq (\pi _{X,f})_*(\varphi ))}\mu (\psi ). 
\label{EquationSubmeasurePullback}\end{equation}
Then, as in the case of continuous maps, we define $f_*(\mu )$, of a positive strong submeasure $\mu$, by the formula $f_*(\mu )=(\pi _{Y,f})_*\pi _{X,f}^*(\mu )$. 

\begin{definition} 
For convenience, we write here the final formula for pushforwarding a strong submeasure by a dominant meromorphic map $f$:
\begin{equation}
f_*(\mu )(\varphi ):=\inf _{\psi \in C^0(X,\geq (\pi _{X,f})_*(\pi _{Y,f}^*(\varphi )))}\mu (\psi ). 
\label{EquationSubmeasurePushforwardMeromorphic}\end{equation}

Similarly, when $\dim (X)=\dim (Y)$, we can define the pullback of a strong submeasure by the formula: 
\begin{equation}
f^*(\mu )(\varphi ):=\inf _{\psi \in C^0(Y,\geq (\pi _{Y,f})_*(\pi _{X,f}^*(\varphi )))}\mu (\psi ). 
\label{EquationSubmeasurePullbackMeromorphic}\end{equation}
\end{definition}

{\bf Remark.} In the above definitions of pullback and pushforward (of positive strong submeasures) or upper-semicontinuous pullback and pushforward (of continuous functions) by meromorphic maps, if we replace the graph $\Gamma _f$ by its resolutions of singularities, we will obtain the same result. This will be proven in Section \ref{SectionPullbackPushforward}. 

Before going further, let us calculate explicitly one simple but interesting example. 

{\bf Example 1.} Let $\pi :Y\rightarrow X$ be the blowup of $X$ at a point $p$, and $V\subset Y$ the exceptional divisor. Let $\delta _p$ be the Dirac measure at $p$. Then for any continuous function $\varphi $ on $Y$, we have
\begin{eqnarray*}
\pi ^*(\delta _p)(\varphi )=\max _{y\in V}\varphi (y).
\end{eqnarray*}
Therefore, $\pi ^*(\delta _p)$ is {\bf not} a measure. In particular, if $A\subset Y$ is a closed set then $\pi ^*(\delta _p) (A)=\inf _{\varphi \in C^0(X,\geq 1_A)}\pi ^*(\delta _p)(\varphi )$ is $\delta _p(\pi (A\cap Y))$.   

\begin{proof}[Proof of Example 1] By definition 
\begin{eqnarray*}
\pi ^*(\delta _p)(\varphi )=\inf _{\psi \in C^0(Y,\geq \pi _*(\varphi ))}\delta _p(\psi )=\inf _{\psi \in C^0(Y,\geq \pi _*(\varphi ))}\psi (p). 
\end{eqnarray*}
Since $\pi :Y\backslash V\rightarrow X\backslash \{p\}$ is an isomorphism, it is easy to check that $\pi _*(\varphi )(p)=\max _{y\in V}\varphi (y)$. Therefore, for any $\psi \in C^0(Y,\geq \pi _*(\varphi ))$, we have $\psi (p)\geq \max _{y\in V}\varphi (y)$. Hence by definition $\pi ^*(\delta _p)(\varphi )\geq \max _{y\in V}\varphi (y)$. On the other hand, for any $\epsilon >0$, choose a small neighborhood $U_{\epsilon}$ of $p$ so that 
\begin{eqnarray*}
\sup _{y\in \pi ^{-1}(U_{\epsilon })}\varphi (y)\leq \epsilon + \max _{y\in V}\varphi (y). 
\end{eqnarray*}
It follows that $\sup _{U_{\epsilon }}\pi _*(\varphi )\leq \epsilon + \max _{y\in V}\varphi (y)$. Since $\pi _*(\varphi )$ is continuous on $X\backslash \{p\}$, it follows by elementary set theoretic topology that we can find a continuous function $\psi $ on $X$ so that $\psi \geq \pi _*(\varphi )$ and $\sup _{U_{\epsilon}}\psi \leq \epsilon + \max _{y\in V}\varphi (y)$. It follows that $\pi ^*(\delta _p)(\varphi )\leq \epsilon + \max _{y\in V}\varphi (y)$. Since $\epsilon $ is an arbitrary positive number, we conclude from the above discussion that $\pi ^*(\delta _p)(\varphi )=\max _{y\in V}\varphi (y)$. Similarly, we can show that $\pi ^*(\delta _p) (A)=\delta _p(\pi (A\cap Y))$. 
\end{proof}

In Theorem \ref{TheoremSubmeasurePushforwardMeromorphic} we will prove some fundamental properties of these pushforward and pullback operators on positive strong submeasures. We extract here some most interesting properties. 

\begin{theorem} Let $f:X\dashrightarrow Y$ be  a dominant meromorphic map of compact complex varieties. 

i) If $\mu _n\in SM^+(X)$ weakly converges to $\mu$,  and $\nu$ is a cluster point of $f_*(\mu _n)$, then $\nu \leq f_*(\mu )$. If $f$ is holomorphic, then $\lim _{n\rightarrow\infty}f_*(\mu _n)=f_*(\mu )$. 

ii) For any positive strong submeasure $\mu$, we have $f_*(\mu )=\sup _{\chi \in \mathcal{G}(\mu)}f_*(\chi )$, where $\mathcal{G}(\mu )=\{\chi :$ $\chi $ is a measure and $\chi \leq \mu \}$. 

iii) For positive strong submeasures $\mu _1,\mu _2$, we have $f_*(\mu _1+\mu _2)\geq f_*(\mu _1)+f_*(\mu _2)$. 

\label{TheoremSubmeasurePushforwardMeromorphicShortVersion}\end{theorem}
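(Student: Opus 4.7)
I will treat the three parts separately, all unfolding from the definition
$$f_*(\mu)(\varphi) \;=\; \inf_{\psi \in C^0(X,\,\geq g)} \mu(\psi), \qquad g := (\pi_{X,f})_*(\pi_{Y,f}^*\varphi),$$
and using three inputs: (a) the monotonicity and sublinearity of $\mu$; (b) the Hahn--Banach description of $\mu$ from Theorem \ref{TheoremHahnBanach}, together with the observation that any bounded linear functional $\chi$ dominated by the positive $\mu$ is automatically a positive measure, since $\chi(-\phi)\leq \mu(-\phi)\leq \mu(0)=0$ whenever $\phi\geq 0$; and (c) Baire's theorem, which furnishes continuous $\psi_n\downarrow g$ and hence $\chi(g)=\inf_{\psi\geq g,\,\psi\in C^0}\chi(\psi)$ for every positive measure $\chi$.

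\textbf{Parts (i) and (iii).} For (i), for any continuous $\psi\geq g$ the definition gives $f_*(\mu_n)(\varphi)\leq \mu_n(\psi)$; passing to a subsequence realising the cluster point $\nu$ yields $\nu(\varphi)\leq \mu(\psi)$, and taking the infimum in $\psi$ gives $\nu(\varphi)\leq f_*(\mu)(\varphi)$. When $f$ is holomorphic, $g=f^*\varphi$ is already continuous, the infimum is attained at $\psi=g$, and so $f_*(\mu_n)(\varphi)=\mu_n(f^*\varphi)\to \mu(f^*\varphi)=f_*(\mu)(\varphi)$; the uniform bound $\|f_*(\mu_n)\|\leq \sup_n\|\mu_n\|$ comes from $\|f^*\varphi\|_{L^\infty}=\|\varphi\|_{L^\infty}$, giving weak convergence in $SM^+(Y)$. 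For (iii), each continuous $\psi\geq g$ satisfies $(\mu_1+\mu_2)(\psi)=\mu_1(\psi)+\mu_2(\psi)\geq f_*(\mu_1)(\varphi)+f_*(\mu_2)(\varphi)$, so the infimum over $\psi$ concludes.

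\textbf{Part (ii), the main obstacle.} The easy direction $f_*(\mu)\geq \sup_\chi f_*(\chi)$ is immediate from $\chi(\psi)\leq \mu(\psi)$ on $C^0(X)$, which orders the corresponding infima over $\psi\geq g$. The reverse direction I plan to recast as a minimax identity. Setting
$$A=\{\psi\in C^0(X):\psi\geq g\}, \qquad B=\mathcal{G}(\mu), \qquad F(\psi,\chi)=\chi(\psi),$$
we have that $A$ is convex, $B$ is convex and weak-$*$ compact by Banach--Alaoglu (each $\chi\in B$ is positive with total mass $\leq \mu(1)$), $\sup_{\chi\in B}F(\psi,\chi)=\mu(\psi)$ by ingredient (b), and $\inf_{\psi\in A}F(\psi,\chi)=\chi(g)=f_*(\chi)(\varphi)$ by ingredient (c). The desired equality is then $\inf_A\sup_B F=\sup_B\inf_A F$, and Sion's minimax theorem applies: $F$ is bilinear, sup-norm continuous in $\psi$, weak-$*$ continuous in $\chi$, and $B$ is compact. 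The delicate point is precisely this interchange of sup and inf: verifying each Sion hypothesis in the correct topology, and confirming that the Hahn--Banach realisations of $\mu$ are genuine positive measures rather than merely bounded linear functionals.
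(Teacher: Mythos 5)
Your parts (i) and (iii) are correct and, for (i), essentially identical to the paper's argument (choose $\psi_0\geq f^*\varphi$ with $\mu(\psi_0)$ close to the infimum and use $\mu_n(\psi_0)\to\mu(\psi_0)$; in the holomorphic case the infimum is attained at the continuous function $f^*\varphi$ itself). Your (iii) is actually more direct than the paper's: the paper deduces it from part (ii) via the inclusion $\mathcal{G}(\mu_1)+\mathcal{G}(\mu_2)\subset\mathcal{G}(\mu_1+\mu_2)$, whereas you get it in one line from the sublinearity of each $\mu_i$ applied to a common test function $\psi\geq f^*\varphi$; both are fine. The genuine divergence is in part (ii). The paper's proof is complex-geometric: by Hironaka it reduces to $f=h\circ\pi^{-1}$ with $h$ holomorphic and $\pi$ a composition of blowups at smooth centres, disposes of $h$ directly, and for a single blowup $\pi$ proves $\pi_*\pi^*\mu=\mu$ and then shows that every measure $\chi'\leq\pi^*\mu$ satisfies $\chi'\leq\pi^*(\pi_*\chi')$ with $\pi_*\chi'\leq\mu$. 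Your proof replaces all of this with a single application of Sion's minimax theorem to $F(\psi,\chi)=\chi(\psi)$ on $A\times B$, where $A=C^0(X,\geq f^*\varphi)$ is convex and $B=\mathcal{G}(\mu)$ is convex and weak-$*$ compact (Banach--Alaoglu plus the closedness of the conditions $\chi\geq 0$, $\chi\leq\mu$, and the mass bound $\chi(1)\leq\mu(1)$); the two marginal identities $\sup_{B}F(\psi,\cdot)=\mu(\psi)$ and $\inf_{A}F(\cdot,\chi)=f_*(\chi)(\varphi)$ hold by Theorem \ref{TheoremHahnBanach} and by the definition of $f_*$ on measures respectively, and bilinearity plus the stated continuity hypotheses make Sion's theorem applicable. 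I checked the hypotheses and the argument is sound. What each approach buys: yours is purely functional-analytic, needs nothing about the complex structure beyond the fact that $f^*\varphi$ is bounded upper-semicontinuous, and therefore transfers verbatim to meromorphic correspondences and to the transcendental setting of Section 5; the paper's approach is heavier but produces reusable structural facts along the way ($\pi_*\pi^*\mu=\mu$ for blowups, and the explicit fibre-wise description that feeds into Theorem \ref{TheoremPushforwardBlowup}). One presentational caution: since the reverse inequality in (ii) is the entire content, you should spell out the Sion verification rather than leave it as a "plan", but as sketched there is no gap.
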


 In Example 2 in Section \ref{SectionPullbackPushforward}, we will show that strict inequality can happen in part i) in general. It also shows that, in contrast to the case of a continuous map - see Section 4, part ii) does not hold in general if we replace $\mathcal{G}(\mu )$ by a smaller set $\mathcal{G}$ (still satisfying $\mu =\sup _{\chi \in \mathcal{G}}\chi$). Generalising Example 1, we will give an explicit expression in Theorem \ref{TheoremPushforwardBlowup} for the pushforward  $f_*$ and a corresponding collection $\mathcal{G}$ in part 2) of Theorem \ref{TheoremHahnBanach}. We also remark that several results in Theorem \ref{TheoremSubmeasurePushforwardMeromorphic} (such as parts 1,2,3) can be extended easily to meromorphic correspondences. 

\subsection{Invariant positive strong submeasures and entropy} In this subsection we apply the previous results to explore invariant strong submeasures and entropy of dominant meromorphic maps $f:X\dashrightarrow X$, where $X$ is a compact complex variety.

We first recall some fundamental results on entropy in continuous dynamics. Consider for the moment $f:X\rightarrow X$ a continuous map of a compact metric space $X$. Then there is a notion of topological entropy $h_{top}(f)$ \cite{adler-konheim-mcandrew}, which is a number between $[0,\infty ]$ and measures the complexity of the map $f$. For example, if $f$ is periodic, then its entropy is zero, and it is regarded dynamically uninteresting. For an $f$-invariant probability measure $\mu$, we also have the notion of measure entropy $h_{\mu}(f)$, which will be recalled in Section 4. We have the famous Variational Principle \cite{goodman, goodwyn}:
\begin{eqnarray*}
h_{top}(f)=\sup _{\mu }h_{\mu}(f),
\end{eqnarray*}
 where the supremum is all over invariant probability measures. Also, we recall that we can construct all invariant measures of $f$ by the following manner: Choose any measure $\mu$ on $X$, then take the Cesaro's averages $\frac{1}{n}\sum _{j=0}^n(f_*)^j(\mu)$, and finally pick cluster points of such sequences. 

The topological and measure entropies have some good behaviours in relation to semi-conjugacies. Let $g:Y\rightarrow Y$ be another continuous map of a compact metric space. Assume that there is a surjective continuous map $\pi :Y\rightarrow X$ so that $\pi \circ g=f\circ \pi$. In this case, we say that $g$ is semi-conjugated to $f$ via $\pi$. Then $h_{top}(g)\geq h_{top}(f)$. Moreover, for every probability measure $\mu$ on $X$ invariant by $f$, there is a probability measure $\hat{\mu}$ on $Y$ invariant by $g$ so that $\pi _*(\hat{\mu})=\mu$. Moreover, in this case we have $h_{\hat{\mu}}(g)\geq h_{\mu}(f)$.

Now we turn to dynamics of meromorphic maps $f:X\dashrightarrow X$ of a compact complex variety $X$. Our aim is to obtain analogs of the above mentioned results for this case.  There are several equivalent notions of topological entropy for meromorphic maps. Here we use the one first defined by S. Friedland. The set $I_{\infty}(f):=\bigcup _{j\geq 0}f^{-j}(I(f))$ is a countable union of proper analytic subsets of $X$, and hence is nowhere dense in $X$. If $x\in X\backslash I_{\infty}(f)$, then $f^j(x)\notin I(f)$ for all $j=0,1,2,\ldots $. By Tykhonov's theorem, the countable Cartesian product $X^{\mathbb{N}}$ is a compact Hausdorff space, and it is moreover a compact metric space.  We define $\Gamma _{f,\infty}\subset X^{\mathbb{N}}$ to be the closure of the set $\{(x,f(x),f^2(x),\ldots ):~x\in X\backslash I_{\infty}(f)\}$. Then $\Gamma _{f,\infty}$ is itself a compact metric space, and we have an induced continuos map $\phi _f:\Gamma _{f,\infty}\rightarrow \Gamma _{f,\infty}$ given by the formula $\phi _f(x_1,x_2,x_3,\ldots )=(x_2,x_3,\ldots )$. The topological entropy of $f$ is then given by:
\begin{eqnarray*}
h_{top}(f):= h_{top}(\phi _f).
\end{eqnarray*} 
If $\mu$ is a probability measure on $X$ having no mass on proper analytic subsets and is invariant by $f$, then we can define exactly as in the continuous dynamics case a notion of measure entropy $h_{\mu}(f)$, and it is again true that $h_{\mu}(f)\leq h_{top}(f)$. However, if $\mu$ has mass on proper analytic subsets, then in general it is not known how to define measure entropy $h_{\mu}(f)$, since in the definition of measure entropy we need to use the fact that the preimages by $f^j$ of any Borel $\mu$-partition of $X$ are again Borel $\mu$-partitions of $X$ for all $j=0,1,2,\ldots$. The latter is no longer guaranteed if $f$ is merely meromorphic and $\mu$ has mass on proper analytic subsets of $X$. Also, theoretically the Variational Principle may not hold if we restrict to only measures having no mass on proper analytic subsets. Likewise, other properties of continuous dynamics, which we mention above, do not hold in the meromorphic setting. For example, there is no guarantee that the Cesaro's average approach can provide us with invariant measures.  See Section 5 for a discussion in the related case of transcendental maps of $\mathbb{C}$. 

However, as we will see next, still some analogs of the above classical results in continuous dynamics hold for meromorphic maps, if we allow positive strong submeasures in the consideration. The main idea is to relate the dynamics of $f$ and $\phi _f$ via the natural projection $\pi _1:\Gamma _{f,\infty}\rightarrow X$ given by the following formula $\pi _1(x_1,x_2,\ldots )=x_1$. It is easy to check that $\pi _1$ is continuous and surjective. The key is that while $\phi _f$ and $f$ are not semi-conjugate via $\pi _1$ in the strict sense (since $f$ is not continuous), they are almost semi-conjugate: If $\hat{x}\in \Gamma _{f,\infty}$ is such that $\pi _1(\hat{x})\notin I(f)$, then $\pi _1\circ \phi _f(\hat{x})=f\circ \pi _1(\hat{x})$. The precise relation between these two maps, in terms of measures, is the following simple but very useful observation. 
\begin{proposition}
If $\hat{\mu}$ is a positive strong submeasure on $\Gamma _{f,\infty}$, then 
\begin{eqnarray*}
f_* (\pi _1)_*(\hat{\mu})\geq (\pi _1)_*(\phi _f)_*(\hat{\mu}).
\end{eqnarray*}
In general, we have that $f_* (\pi _1)_*(\hat{\mu})\not= (\pi _1)_*(\phi _f)_*(\hat{\mu})$, even if $\hat{\mu}$ is a measure. 
\label{PropositionKeyEntropy}\end{proposition}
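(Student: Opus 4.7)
The plan is to test both sides of the inequality against an arbitrary $\varphi \in C^0(X)$ and reduce the claim to a pointwise comparison of two continuous functions on $\Gamma_{f,\infty}$, to which monotonicity of the positive strong submeasure $\hat{\mu}$ can then be applied. Because $\pi_1$ and $\phi_f$ are continuous, the right hand side unfolds immediately as $(\pi_1)_*(\phi_f)_*(\hat{\mu})(\varphi) = \hat{\mu}(\varphi \circ \pi_1 \circ \phi_f)$. By the meromorphic pushforward formula (\ref{EquationSubmeasurePushforwardMeromorphic}), applied to the positive strong submeasure $(\pi_1)_*(\hat{\mu})$ on $X$, the left hand side rewrites as
\[
f_*(\pi_1)_*(\hat{\mu})(\varphi) = \inf_{\psi \in C^0(X,\,\geq f^*\varphi)} \hat{\mu}(\psi \circ \pi_1),
\]
where $f^*\varphi$ is the canonical upper-semicontinuous extension of $\varphi \circ f$ from $X \setminus I(f)$ to all of $X$ (Proposition \ref{PropositionUpperSemicontinuousExtension}).

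The heart of the argument, and the main obstacle, is to prove the pointwise inequality $\psi \circ \pi_1 \geq \varphi \circ \pi_1 \circ \phi_f$ on all of $\Gamma_{f,\infty}$ for every admissible $\psi \in C^0(X,\geq f^*\varphi)$. For $\hat{x} = (x_1,x_2,\ldots) \in \Gamma_{f,\infty}$ with $x_1 \notin I(f)$, continuity of $f$ at $x_1$ combined with the definition of $\Gamma_{f,\infty}$ as the closure of honest orbits forces $x_2 = f(x_1)$, and the inequality reduces to $\psi(x_1) \geq \varphi(f(x_1)) = f^*\varphi(x_1)$, which holds by assumption on $\psi$. The delicate case is $x_1 \in I(f)$: here I would approximate $\hat{x}$ by a sequence $\hat{x}^{(n)} = (x_1^{(n)}, f(x_1^{(n)}),\ldots) \in \Gamma_{f,\infty}$ with $x_1^{(n)} \notin I_\infty(f)$ and $\hat{x}^{(n)} \to \hat{x}$, an approximation built into the closure definition of $\Gamma_{f,\infty}$. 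Coordinatewise convergence gives $x_2 = \lim_n f(x_1^{(n)})$, and so
\[
\varphi(x_2) = \lim_n \varphi(f(x_1^{(n)})) \leq \limsup_{y \to x_1,\, y \notin I(f)} \varphi(f(y)) = f^*\varphi(x_1) \leq \psi(x_1).
\]
This is where the interplay between the closure definition of $\Gamma_{f,\infty}$ and $f^*\varphi$ being the upper-semicontinuous \emph{envelope} of $\varphi \circ f$ is decisive; the rest of the proof is essentially formal.

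Once the pointwise inequality is in hand, $\psi \circ \pi_1$ and $\varphi \circ \pi_1 \circ \phi_f$ are both continuous on $\Gamma_{f,\infty}$, so positivity (monotonicity) of $\hat{\mu}$ gives $\hat{\mu}(\psi \circ \pi_1) \geq \hat{\mu}(\varphi \circ \pi_1 \circ \phi_f)$, and taking the infimum over admissible $\psi$ completes the proof of the inequality. For the strict inequality claim, an adaptation of Example 1 applies: pick $p \in I(f)$ whose image $V := \pi_{Y,f}(\pi_{X,f}^{-1}(p))$ is positive-dimensional, a point $y \in V$, and $\varphi \in C^0(X)$ with $\varphi(y) < \max_V \varphi$. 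Take $\hat{x} \in \Gamma_{f,\infty}$ of the form $(p,y,x_3,\ldots)$, produced by accumulating honest orbits $x_1^{(n)} \to p$ along directions pushed by $f$ toward $y$, and set $\hat{\mu} = \delta_{\hat{x}}$. Then $(\pi_1)_*(\phi_f)_*(\hat{\mu})(\varphi) = \varphi(y)$, whereas $(\pi_1)_*(\hat{\mu}) = \delta_p$ and the Example 1 computation yields $f_*(\delta_p)(\varphi) = f^*\varphi(p) = \max_V \varphi > \varphi(y)$.
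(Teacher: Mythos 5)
Your proof is correct and follows essentially the same route as the paper's: both reduce the inequality to showing $\pi_1^*\psi\geq \phi_f^*\pi_1^*\varphi$ on all of $\Gamma_{f,\infty}$ for every $\psi\in C^0(X,\geq f^*\varphi)$, the paper by noting this holds on the dense open set $\pi_1^{-1}(X\setminus I(f))$ and extends by continuity, you by verifying it pointwise through approximating honest-orbit sequences (which is the same density argument unwound), and then applying monotonicity of $\hat{\mu}$. Your counterexample for the non-equality claim is a more explicit instance of the paper's (which merely observes that the left side can fail to be a measure while the right side always is), and your direct treatment of submeasures dispenses with the paper's preliminary reduction to measures; neither difference is substantive.
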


We now discuss some properties of invariant positive strong submeasures. We note that in the case $f$ is a continuous map and $\mu$ is a measure, then $f_*(\mu)=\mu$ if and only if either $f_*(\mu)\geq \mu$ or $f_*(\mu )\leq \mu$. In the case we are concerned here, when $f:X\dashrightarrow X$ is a dominant meromorphic map of a compact complex variety, and $\mu$ is a positive strong submeasure, the properties $f_*(\mu )=\mu$,  $f_*(\mu )\geq \mu$ and $f_*(\mu )\leq \mu$ are in general not the same. However, these properties are very much related. For example, it can be checked that if $\mu$ is a measure and $f_*(\mu)\leq \mu$, then $f_*(\mu)=\mu$. On the other hand, we will see that positive strong submeasures $\mu$ having the property that $f_*(\mu)\geq \mu$ appear very naturally in dynamics. If we apply Cesaro's average procedure for meromorphic maps, we obtain positive strong submeasures $\mu$ with $f_*(\mu)\geq \mu$. Likewise, if we have a positive strong submeasure $\hat{\mu}$ on $\Gamma _{f,\infty}$ which is $\phi _f$ - invariant, then $\mu =(\pi _1)_*\hat{\mu}$ satisfies $f_*(\mu)\geq \mu$. These, and many other properties of invariant positive submeasures will be proven in Theorem \ref{TheoremInvariantMeasures}. In particular, we obtain in the next result canonical invariant positive strong submeasures to those $\mu$ which satisfy either $f_*(\mu)\geq \mu$ or $f_*(\mu )\leq \mu$.  

\begin{theorem} Let $f:X\dashrightarrow X$ be a dominant meromorphic map of a compact complex variety. Let $0\not= \mu _0\in SM^+(X)$.

i) If $f_*(\mu _0)=\mu _0$, then there exists a non-zero measure $\hat{\mu _0}$ on $\Gamma _{f,\infty}$ so that $(\phi _f)_*(\hat{\mu _0})=\hat{\mu _0}$ and $(\pi _1)_*(\hat{\mu _0})\leq \mu _0$. Moreover, the set $\{\hat{\mu} \in SM^+(\Gamma _{f,\infty}):~(\pi _1)_*(\hat{\mu})\leq \mu ,~(\phi _f)_*(\hat{\mu} )= \hat{\mu}\}$ has a maximum, denoted by $Inv(\pi _1,\mu )$.

ii) If $f_*(\mu _0)\leq \mu _0$, then the set  $\{\mu \in SM^+(X):~ \mu \leq \mu _0,~f_*(\mu )= \mu \}$ is non-empty and has a largest element, denoted by $Inv(\leq \mu _0)$. Moreover, $Inv(\leq \mu _0)=\lim _{n\rightarrow\infty}(f_*)^n(\mu )$. 

iii) If $f_*(\mu _0)\geq \mu _0$, then the set $\{\mu \in SM^+(X):~ \mu \geq \mu _0,~f_*(\mu )= \mu \}$ is non-empty and has a smallest element, denoted by $Inv(\geq \mu _0)$. 

\label{TheoremInvariantMeasuresShortVersion}\end{theorem}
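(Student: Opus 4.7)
\emph{Plan.} I handle the three parts in order, using throughout the monotonicity of $f_*$ on $SM^+(X)$ (immediate from formula \ref{EquationSubmeasurePushforwardMeromorphic}) and the upper semicontinuity of $f_*$ established in Theorem \ref{TheoremSubmeasurePushforwardMeromorphicShortVersion}(i).

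\emph{Part (ii).} From $f_*(\mu_0)\le\mu_0$ and monotonicity, the iterates $\mu_n:=(f_*)^n(\mu_0)$ form a pointwise-decreasing sequence in $SM^+(X)$ bounded above by $\mu_0$. For each $\varphi\in C^0(X)$ the real sequence $(\mu_n(\varphi))$ is monotone and bounded, so $\mu(\varphi):=\lim_n\mu_n(\varphi)$ defines an element of $SM^+(X)$ (sub-linearity, positivity and the norm bound all pass to the limit), and $\mu_n\rightharpoonup\mu$. Monotonicity gives $f_*(\mu)\le f_*(\mu_n)=\mu_{n+1}$, hence $f_*(\mu)\le\mu$; upper semicontinuity applied to $\mu_{n+1}=f_*(\mu_n)\rightharpoonup\mu$ gives $\mu\le f_*(\mu)$. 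Hence $f_*(\mu)=\mu$. Maximality is immediate: any invariant $\nu\le\mu_0$ satisfies $\nu=(f_*)^n(\nu)\le\mu_n$, so $\nu\le\mu=:\mathrm{Inv}(\le\mu_0)$.

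\emph{Part (iii).} Starting from $f_*(\mu_0)\ge\mu_0$ the same iterates are pointwise-increasing, but their pointwise limit only satisfies $\mu\le f_*(\mu)$; the reverse fails in general because $f_*$ is upper but not lower semicontinuous on $SM^+(X)$, and this is the genuine obstacle. I therefore iterate transfinitely: set $\mu_0:=\mu_0$, $\mu_{\alpha+1}:=f_*(\mu_\alpha)$, and $\mu_\lambda(\varphi):=\sup_{\alpha<\lambda}\mu_\alpha(\varphi)$ at limit ordinals. Monotonicity keeps the chain non-decreasing; the identity $f_*(\nu)(1)\le\nu(1)$ for $\nu\in SM^+(X)$ combined with $\mu_\alpha\ge\mu_0$ forces $\mu_\alpha(1)=\mu_0(1)$ at every ordinal, so the chain sits inside a single norm-slice of $SM^+(X)$. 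Since $C^0(X)$ is separable, this slice has bounded cardinality, so a standard cardinality argument forces the chain to stabilize at some $\mu_{\alpha_0}$ with $f_*(\mu_{\alpha_0})=\mu_{\alpha_0}\ge\mu_0$. A parallel transfinite induction shows $\mu_\alpha\le\nu$ for every invariant $\nu\ge\mu_0$, so $\mu_{\alpha_0}=\mathrm{Inv}(\ge\mu_0)$ is the required minimum.

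\emph{Part (i).} By Theorem \ref{TheoremHahnBanach} pick a nonzero positive measure $\chi\le\mu_0$ and restrict it to $X\setminus I_\infty(f)$; this restriction is still nonzero because otherwise iterated pushforward combined with Example 1 would contradict $\mu_0(1)=f_*(\mu_0)(1)>0$ (this is the main technical subtlety here). The orbit map $x\mapsto(x,f(x),f^2(x),\ldots)$ is then well-defined on the support of $\chi$ and embeds it into $\Gamma_{f,\infty}$, giving a lift $\hat\chi$ with $(\pi_k)_*(\hat\chi)=(f^{k-1})_*(\chi)\le\mu_0$ for all $k\ge 1$. Form Cesaro averages $\hat\chi_N:=\tfrac1N\sum_{k=0}^{N-1}(\phi_f)_*^k\hat\chi$ on the compact metric space $\Gamma_{f,\infty}$ and pick a weak cluster point $\hat{\mu_0}$; standard Krylov--Bogolyubov telescoping (valid because $\phi_f$ is continuous) gives $(\phi_f)_*(\hat{\mu_0})=\hat{\mu_0}$, and weak continuity of $(\pi_1)_*$ for the holomorphic $\pi_1$ (Theorem \ref{TheoremSubmeasurePushforwardMeromorphicShortVersion}(i), holomorphic case) together with $(\pi_1)_*(\hat\chi_N)=\tfrac1N\sum_k(f^k)_*(\chi)\le\mu_0$ gives $(\pi_1)_*(\hat{\mu_0})\le\mu_0$; nonzero-ness is $\hat{\mu_0}(1)=\chi(1)>0$. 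For the maximum assertion, the family $\mathcal F:=\{\hat\mu\in SM^+(\Gamma_{f,\infty}):(\pi_1)_*(\hat\mu)\le\mu,\ (\phi_f)_*(\hat\mu)=\hat\mu\}$ has uniform norm bound $\mu(1)$, and both defining conditions pass to pointwise suprema because $\pi_1$ and $\phi_f$ are continuous, so their pushforwards commute with sups of positive strong submeasures. Hence $\hat\mu^*(\psi):=\sup_{\hat\mu\in\mathcal F}\hat\mu(\psi)$ lies in $\mathcal F$ and is its maximum $\mathrm{Inv}(\pi_1,\mu)$.
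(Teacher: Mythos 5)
Your parts (ii) and (iii) are correct. Part (ii) is essentially the paper's own argument (monotone decreasing iterates, then the two one-sided inequalities from monotonicity and from Theorem \ref{TheoremSubmeasurePushforwardMeromorphicShortVersion}(i)). Part (iii) genuinely differs from the paper: you run a transfinite Bourbaki--Witt-type iteration and stabilize by a cardinality bound, whereas the paper defines $Inv(\geq\mu_0)$ directly as $\sup\{\nu\in M^+(X):\nu\leq\mu\ \forall\mu\in\mathcal{G}\}$ (the largest submeasure below every invariant majorant of $\mu_0$), shows $f_*$ of it is $\leq$ itself using part 6) of Theorem \ref{TheoremSubmeasurePushforwardMeromorphic}, and then feeds it into part (ii). Both work; your route avoids the slightly delicate verification that this sup is $f$-sub-invariant, at the cost of transfinite machinery, while the paper's stays inside $\omega$-sequences.

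Part (i) has a genuine gap at the very first step. You need a nonzero measure $\chi\leq\mu_0$ whose restriction to $X\setminus I_\infty(f)$ is nonzero, and your justification (``otherwise iterated pushforward combined with Example 1 would contradict $\mu_0(1)=f_*(\mu_0)(1)>0$'') does not work: by part 1) of Theorem \ref{TheoremSubmeasurePushforwardMeromorphic}, $f_*$ preserves the total mass of \emph{every} positive strong submeasure, no matter where its mass sits, so no contradiction arises from all minorant measures living on $I_\infty(f)$. And this situation really occurs: for the Cremona involution $J$ of Example 2, $\mu_0=\sup_{x\in\Sigma_0\cup\Sigma_1\cup\Sigma_2}\delta_x$ satisfies $J_*(\mu_0)=\mu_0$, yet every measure $\leq\mu_0$ is a probability measure supported on $\Sigma_0\cup\Sigma_1\cup\Sigma_2\subset I_\infty(J)$, so your orbit-map lift $x\mapsto(x,f(x),f^2(x),\ldots)$ is defined on a $\chi$-null set. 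The repair is the paper's: take any nonzero $\chi\leq\mu_0$ and lift it to a measure $\nu_0$ on $\Gamma_{f,\infty}$ with $(\pi_1)_*(\nu_0)=\chi$ using only that $\pi_1$ is a continuous surjection of compact metric spaces (no orbit map needed); then Proposition \ref{PropositionKeyEntropy} together with $f_*(\mu_0)=\mu_0$ gives $(\pi_1)_*(\phi_f)_*^j(\nu_0)\leq\mu_0$ for all $j$ by induction, and the Cesaro cluster-point argument you wrote goes through verbatim from there. Your treatment of the maximum $Inv(\pi_1,\mu)$ is fine and agrees with the paper.
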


In summary, we have several canonical ways to associate invariant positive strong submeasures on $X$ or $\Gamma _{f,\infty}$. If $\hat{\mu}$ is an invariant positive strong submeasure on $\Gamma _{f,\infty}$, then we obtain an invariant positive strong submeasure $Inv(\geq (\pi _1)_*(\hat{\mu}))$ on $X$. Conversely, if $\mu$ is an invariant positive strong submeasure on $X$, then we obtain an invariant positive strong submeasure $Inv(\pi _1, \mu )$ on $\Gamma _{f,\infty}$. If $\mu$ is any positive strong submeasure on $X$ and $\mu _{\infty}$ is any cluster point of Cesaro's average $\frac{1}{n}\sum _{j=0}^n(f_*)^j(\mu _{\infty})$, then we obtain an invariant positive strong submeasure $Inv (\geq \mu _{\infty})$. 

Now we are ready to discuss entropy of positive strong submeasures invariant by a meromorphic map $f:X\dashrightarrow X$. We recall the discussion above that it is unknown how to define measure entropy for a measure having mass in the indeterminacy set $I(f)$, and also it is not guaranteed that dynamically interesting positive measures exist for $f$. In constrast, there are abundant invariant positive strong submeasures. Therefore, it is interesting to define the notion of measure entropy for these strong submeasures. As will be seen in Section 4, just as in the case of measures with mass in the indeterminacy set $I(f)$, a naive adaptation of the classical definition in this case will usually lead to the value $\infty$, which is pathological in view of the fact that topological entropy of a meromorphic map is finite (bounded from above by its dynamical degrees \cite{dinh-sibony3}) and the Variational Principle in continuous dynamics. 

To motivate a reasonable definition of measure entropy for meromorphic maps, we look at two special situations. The first one is discussed in Section 4.1, where we show that if for an invariant positive strong submeasure $\mu$ of a continuous map $f:X\rightarrow X$ we  define $h_{\mu}(f)$ to be $\sup _{\nu \in M^+(X),~f_*(\nu )=\nu, ~ \nu \leq \mu }h_{\nu}(f)$, then the Variational Principle is still satisfied. Taking supremum such as above is also compatible with the fact that the measure entropy $h_{\nu}(f)$ is monotone with respect to $\nu$. For the second special case, we consider now $f:X\dashrightarrow X$ a dominant meromorphic map, and $\mu$ an invariant positive measure with no mass on $I_{\infty}(f)$. Then we have the following simple observation that the measure entropy $h_{\mu}(f)$ can be computed using the continuous map $\phi _f$. 
\begin{proposition}
Let $f:X\dashrightarrow X$ be a dominant meromorphic map of a compact complex variety. Let $\mu$ be an invariant positive measure of $f$ with no mass on $I_{\infty}(f)$. Then there exists a unique measure $\hat{\mu}$ on $\Gamma _{f,\infty}$ so that $(\pi _1)_*(\hat{\mu})=\mu$ and $(\phi _f)_*(\hat{\mu})=\hat{\mu}$. Moreover,
\begin{eqnarray*}
h_{\mu}(f)=h_{\hat{\mu}}(\phi _f)=\sup _{\nu \in M^+(\Gamma _{f,\infty}),~(\phi _f)_*(\nu )=\nu ,~(\pi _1)_*(\nu )\leq \mu}h_{\nu}(\phi _f). 
\end{eqnarray*}
 \label{PropositionMotivationMeasureEntropy}\end{proposition}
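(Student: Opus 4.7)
\emph{Strategy.} The plan is to realize $\hat\mu$ as the pushforward of $\mu$ by the orbit map $\iota (x):=(x,f(x),f^{2}(x),\ldots )$, show that this lift is the unique measure on $\Gamma _{f,\infty}$ with the prescribed projection and invariance, and then deduce both the entropy identity and the supremum formula from the resulting measure-theoretic conjugacy between $(X,\mu ,f)$ and $(\Gamma _{f,\infty},\hat\mu ,\phi _{f})$.

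\emph{Existence and uniqueness of }$\hat\mu $. Put $X_{0}:=X\backslash I_{\infty}(f)$; by hypothesis $\mu (X_{0})=\mu (X)$, and since $I_{\infty}(f)=\bigcup _{j}f^{-j}(I(f))$ is forward invariant and $f$ is holomorphic off $I(f)$, the orbit map $\iota :X_{0}\to \Gamma _{f,\infty}$ is well defined and Borel. Set $\hat\mu :=\iota _{*}\mu $. From $\pi _{1}\circ \iota =\mathrm{id}_{X_{0}}$ and $\phi _{f}\circ \iota =\iota \circ f$ on $X_{0}$, together with $f_{*}\mu =\mu $, one reads off $(\pi _{1})_{*}\hat\mu =\mu $ and $(\phi _{f})_{*}\hat\mu =\hat\mu $. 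Uniqueness rests on the following fibre lemma: for every $x\in X_{0}$ one has $\pi _{1}^{-1}(x)\cap \Gamma _{f,\infty}=\{\iota (x)\}$; indeed, if $(x,y_{2},y_{3},\ldots )\in \Gamma _{f,\infty}$ with $x\in X_{0}$, pick $x^{(n)}\in X_{0}$ with $(x^{(n)},f(x^{(n)}),\ldots )\to (x,y_{2},\ldots )$, and since each $f^{j}$ is continuous at every point of $X_{0}$ we get $y_{j+1}=\lim f^{j}(x^{(n)})=f^{j}(x)$. Hence any $\hat\nu $ on $\Gamma _{f,\infty}$ with $(\pi _{1})_{*}\hat\nu =\mu $ satisfies $\hat\nu (\pi _{1}^{-1}(I_{\infty}(f)))=\mu (I_{\infty}(f))=0$, is therefore concentrated on $\iota (X_{0})$, and on this set $\pi _{1}$ is a Borel isomorphism with inverse $\iota $; consequently $\hat\nu =\iota _{*}(\pi _{1})_{*}\hat\nu =\iota _{*}\mu =\hat\mu $.

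\emph{Entropy equality and supremum.} The restriction of $\iota $ to $X_{0}$ is a Borel isomorphism onto the $\hat\mu $-conull set $\iota (X_{0})\subset \Gamma _{f,\infty}$ that intertwines the $\mu $-a.e.\ defined transformation $f$ with the continuous map $\phi _{f}$; hence $(X,\mu ,f)$ and $(\Gamma _{f,\infty},\hat\mu ,\phi _{f})$ are isomorphic as measure-preserving dynamical systems and $h_{\mu }(f)=h_{\hat\mu }(\phi _{f})$. The measure $\hat\mu $ lies in the set $\mathcal S:=\{\nu \in M^{+}(\Gamma _{f,\infty}):(\phi _{f})_{*}\nu =\nu ,\,(\pi _{1})_{*}\nu \leq \mu \}$, so the supremum in the statement is $\geq h_{\hat\mu }(\phi _{f})$. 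Conversely, for $\nu \in \mathcal S$ set $\nu ':=(\pi _{1})_{*}\nu $; since $\nu '\leq \mu $ puts no mass on $I_{\infty}(f)$, the fibre lemma forces $\nu =\iota _{*}\nu '$, and the almost-semi-conjugacy $f\circ \pi _{1}=\pi _{1}\circ \phi _{f}$ valid on $\pi _{1}^{-1}(X_{0})$ yields $f_{*}\nu '=\nu '$. The same isomorphism gives $h_{\nu }(\phi _{f})=h_{\nu '}(f)$, and from $\nu '\leq \mu $ combined with the affinity of Kolmogorov--Sinai entropy (in the unnormalized convention adopted in Section~4, where entropy of a positive invariant measure is the mass times the entropy of its normalization) one gets $h_{\nu '}(f)\leq h_{\mu }(f)$, closing the chain of inequalities.

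\emph{Main obstacle.} The one genuinely non-trivial point is the fibre lemma, which is where the hypothesis $\mu (I_{\infty}(f))=0$ enters essentially; it is precisely this step that lets one replace every candidate lift by $\iota _{*}\mu $ and thereby convert the question into standard ergodic theory of the continuous map $\phi _{f}$. The concluding monotonicity $h_{\nu '}(f)\leq h_{\mu }(f)$ is then routine once the unnormalized entropy convention of Section~4 is in force, and without it one would need an extra normalization argument at the end.
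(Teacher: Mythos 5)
Your proof is correct and follows essentially the same route as the paper's: both rest on the observation that $\pi _1$ restricted to $\Gamma _{f,\infty}\cap \pi _1^{-1}(X\backslash I_{\infty}(f))$ is a bijection onto $X\backslash I_{\infty}(f)$, which forces any lift of $\mu$ (or of any $(\pi _1)_*\nu \leq \mu$) to equal $\iota _*\mu$ and converts the statement into a measure-theoretic conjugacy with the continuous map $\phi _f$. Your write-up is in fact somewhat more complete than the paper's, which starts from an arbitrary lift without constructing one and leaves the fibre computation and the final monotonicity of entropy implicit.
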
  

These two special cases suggest the following definition of measure entropy for positive strong submeasures invariant by meromorphic maps. 
\begin{definition}
Let $f:X\dashrightarrow X$ be a dominant meromorphic map of a compact complex variety $X$. Let $\mu$ be a positive strong submeasure invariant by $f$. We define 
\begin{eqnarray*}
h_{\mu}(f):=\sup _{\nu \in M^+(\Gamma _{f,\infty}), ~(\phi _f)_*(\nu )=\nu ,~ (\pi _1)_*(\nu )\leq \mu}h_{\nu }(\phi _f).
\end{eqnarray*}
\label{DefinitionMeasureEntropyMeromorphicMap}\end{definition}
We will show in Section 4 that this measure entropy has good properties as wanted (such as: monotone in $\mu$, bounded from above by dynamical degrees of $f$, and satisfies the Variational Principle). In particular, the following invariant positive strong submeasure captures the dynamics of $f$: We let $\hat{\mu}_{\phi _f,inv}$ to be the supremum of all $\phi _f$-invariant probability measures. Then $\hat{\mu}_{\phi _f,inv}$ is an $\phi _f$-invariant positive strong submeasure and $h_{top}(f)=h_{top}(\phi _f)=h_{\hat{\mu}_{\phi _f,inv}}(\phi _f)$. Now $\mu _{f,inv}=Inv(\geq (\pi _1)_*(\hat{\mu}_{\phi _f,inv}))$  is an $f$-invariant positive strong submeasure of mass $1$ and $h_{\mu _{f,inv}}(f)=h_{top}(f)$. By the remarks at the end of Section 4.1, we have that $\mu _{f,inv}<\sup _{x\in X}\delta _x$ in general, for example when $f$ is a holomorphic map which is not the identity map.

\subsection{Least negative intersection of positive closed $(1,1)$ currents}
Next we discuss an application to wedge intersection of positive closed $(1,1)$ currents on compact K\"ahler manifolds. Monge-Ampere operators of positive closed $(1,1)$ currents are a very classical and active area, both on compact and non-compact manifolds \cite{bedford-taylor, bedford-taylor2, demailly, fornaess-sibony, kolodziej, cegrell1, cegrell2, guedj-zeriahi, boucksom-eyssidieux-guedj-zeriahi}. While most of the works in the subject concerns Monge - Ampere operators with no mass on pluripolar sets, there are also several works with masses on pluripolar sets, for example \cite{lempert1, lempert, celik-poletsky, demailly2, lelong, zeriahi, xing, ahag-cegrell-czyz-pham}. For intersection of currents of higher bi-degrees, there are the methods of using super-potentials and tangent currents \cite{dinh-sibony, dinh-sibony1, dinh-sibony2} with applications to complex dynamics and geometry. In all of these works, the resulting (whenever well defined) is always a {\bf positive measure}. However, in any of these methods, there are cases where either it is not known how to define the intersection (for example if the Lelong numbers of the given currents are positive on a common set of positive dimension) or the cohomology class of the intersection is not the intersection of the corresponding cohomology classes. (This is the case of using non-pluripolar intersection in \cite{bedford-taylor2, guedj-zeriahi, boucksom-eyssidieux-guedj-zeriahi} or using residue currents in \cite{anderson, anderson-wulcan, anderson-blocki-wulcan}. For example, we can see that for the case in Example 3 in Section \ref{SectionIntersection}, both these two approaches give the answer $0$.) The latter property is desirable when discussing invariant measures of maps. We also remark that most of these approaches are {\bf local}, that is the intersection can be defined on small open sets and then patched together, while our approach in the below is of a global nature. 

In previous work \cite{truong1, truong2}, using regularisation of currents \cite{demailly, dinh-sibony3}, we can define a wedge intersection of positive closed currents whenever a continuity property (more precisely the existence and independence of limits of the wedge intersection of regularisations of the given positive closed currents) is satisfied. The resulting (whenever well defined) need not be positive measures, but is only a signed measure. However, not every wedge intersection of positive closed currents can be defined by that manner. 

Here, we use an idea different from previous works, and show that the Monge-Ampere operator of positive closed $(1,1)$ currents can be defined for {\bf all} positive closed $(1,1)$ currents - even if their intersection number in cohomology is {\bf negative} - if instead of signed measures we allow strong submeasures. The main idea is that in the classical case, when a wedge intersection of positive closed $(1,1)$ currents are defined, then Bedford-Taylor's monotone convergence is satisfied. In general, we do not have Bedford-Taylor's monotone convergence, but we can consider the set $\mathcal{G}$ of all cluster points (which are signed measures of the form $\chi ^+-\chi ^-$ where $\chi ^{\pm}$ are positive measures) obtained from all monotone approximations of the given currents with some control on the masses. The masses of all signed measures in $\mathcal{G}$ {\bf may not be bounded} (see Example 3 for more detail), but $\chi ^+(1)-\chi ^-(1)$ is a constant (indeed is the intersection number of the corresponding cohomology classes). Let $\mathcal{G}^*$ be the closure of $\mathcal{G}$ with respect to the weak convergence of signed measures. If we restrict to only signed measures in $\mathcal{G}^*$ whose negative part's mass $\chi ^{-}(1)$ is smallest then the corresponding masses {\bf are bounded}. We can then use (\ref{EquationExampleSubmeasure}) to obtain a strong submeasure, the so-called least negative intersection. More details will be given in Section \ref{SectionIntersection}.  The main result is Theorem \ref{TheoremLeastNegativeIntersection} which asserts among other things that this least negative intersection is symmetric and compatible with intersection in cohomology. Some more results about $ \Lambda (T_1,\ldots ,T_p,R)$, including criteria for when it is a positive strong submeasure or measure, will be given in Section \ref{SectionIntersection}.  We extract here one most important property of this least negative intersection. For the notations $\kappa _{T_1,T_2,\ldots ,T_p,R}$ in the statement of the result, the readers are referred to Section \ref{SectionIntersection}.
\begin{theorem} Let $X$ be a compact K\"ahler manifold of dimension $k$
Let $T_1,T_1',T_2,\ldots ,T_p$ be positive closed $(1,1)$ current on $X$, and $R$ a positive closed $(k-p,k-p)$ current. Assume that both $\kappa _{T_1,T_2,\ldots ,T_p,R}$, $\kappa _{T_1,T_2,\ldots ,T_p,R}\geq 0$. Then $\Lambda (T_1+T_1',T_2,\ldots ,T_p,R)\geq \Lambda (T_1,T_2,\ldots ,T_p,R)+\Lambda (T_1',T_2,\ldots ,T_p,R)$.
\label{TheoremLeastNegativeIntersectionShortVersion}\end{theorem}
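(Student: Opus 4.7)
The plan is to exploit the additive structure of the regularisation procedure used to define $\Lambda$, together with the hypothesis that the cohomological intersection numbers $\kappa_{T_1,T_2,\ldots,T_p,R}$ and $\kappa_{T_1',T_2,\ldots,T_p,R}$ are non-negative, in order to produce, from any pair of least-negative cluster measures for the summands, a single cluster measure for the sum whose negative part is no larger than the constraint defining the set $\mathcal{G}^*$ for $T_1+T_1'$.

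The first step is to fix Demailly-type monotone regularisations $T_{1,n}\searrow T_1$, $T_{1,n}'\searrow T_1'$, $T_{i,n}\searrow T_i$ for $i=2,\ldots,p$, and $R_n\searrow R$. The sum $T_{1,n}+T_{1,n}'$ is then a monotone regularisation of $T_1+T_1'$ in the same class of regularisations, and by multilinearity of classical wedge products of smooth forms,
\begin{equation*}
(T_{1,n}+T_{1,n}')\wedge T_{2,n}\wedge\cdots\wedge T_{p,n}\wedge R_n = T_{1,n}\wedge T_{2,n}\wedge\cdots\wedge T_{p,n}\wedge R_n + T_{1,n}'\wedge T_{2,n}\wedge\cdots\wedge T_{p,n}\wedge R_n.
\end{equation*}
If $\chi$ is a weak cluster point of the first summand on the right and $\chi'$ of the second (both written as $\chi^+-\chi^-$ with controlled negative mass, and both sitting in the respective $\mathcal{G}^*$ sets), then by passing to a common subsequence one obtains that $\chi+\chi'$ is a weak cluster point of the left-hand side, hence a cluster point for the regularised intersection of $T_1+T_1',T_2,\ldots,T_p,R$.

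The second and main step is to check that $\chi+\chi'$ actually belongs to the \emph{least-negative} subset $\mathcal{G}^*(T_1+T_1',T_2,\ldots,T_p,R)$, so that it can be used to lower-bound $\Lambda(T_1+T_1',T_2,\ldots,T_p,R)$. Write $\chi+\chi'=(\chi^++\chi'^+)-(\chi^-+\chi'^-)$. The intersection number in cohomology of the sum decomposes as
\begin{equation*}
\kappa_{T_1+T_1',T_2,\ldots,T_p,R}=\kappa_{T_1,T_2,\ldots,T_p,R}+\kappa_{T_1',T_2,\ldots,T_p,R},
\end{equation*}
so the total (signed) mass of $\chi+\chi'$ equals the sum of the total masses, which is exactly the cohomological pairing for $T_1+T_1',T_2,\ldots,T_p,R$. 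Under the hypothesis that both summand intersection numbers are non-negative, one can show that the minimum achievable negative mass for a cluster point of the sum regularisation is at most the sum of the individual minima; since $\chi^-,\chi'^-$ are themselves minimal and achieved by the individual summands, no cancellation can reduce $(\chi^-+\chi'^-)(1)$ below the value forced by cohomology. This is exactly where the non-negativity hypothesis is used: if one of the $\kappa$'s were negative, adding a least-negative decomposition for $T_1'$ to one for $T_1$ could overshoot the minimal negative mass required on the sum side, and the resulting measure would no longer lie in $\mathcal{G}^*(T_1+T_1',\ldots)$.

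Having established that $\chi+\chi'\in\mathcal{G}^*(T_1+T_1',T_2,\ldots,T_p,R)$, the definition of $\Lambda$ as the supremum in (\ref{EquationExampleSubmeasure}) gives, for every $\varphi\in C^0(X)$,
\begin{equation*}
\Lambda(T_1+T_1',T_2,\ldots,T_p,R)(\varphi)\geq (\chi+\chi')(\varphi)=\chi(\varphi)+\chi'(\varphi).
\end{equation*}
Taking the supremum first over admissible $\chi\in\mathcal{G}^*(T_1,T_2,\ldots,T_p,R)$ and then over admissible $\chi'\in\mathcal{G}^*(T_1',T_2,\ldots,T_p,R)$ yields the desired inequality
\begin{equation*}
\Lambda(T_1+T_1',T_2,\ldots,T_p,R)\geq \Lambda(T_1,T_2,\ldots,T_p,R)+\Lambda(T_1',T_2,\ldots,T_p,R).
\end{equation*}
The main obstacle I expect is the second step: verifying precisely that the minimal-negative-mass constraint is preserved under addition, which requires a careful bookkeeping of positive and negative parts of cluster measures and crucially uses the non-negativity of both $\kappa$'s; this is also the reason one obtains only super-additivity and not equality in general.
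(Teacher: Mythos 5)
Your overall strategy coincides with the paper's: part 6) of Theorem \ref{TheoremLeastNegativeIntersection} is proved there in one line from the inclusion $\mathcal{G}^*(T_1,T_2,\ldots ,T_p,R)+\mathcal{G}^*(T_1',T_2,\ldots ,T_p,R)\subset \mathcal{G}^*(T_1+T_1',T_2,\ldots ,T_p,R)$, which is exactly your first step (add the regularising potentials of $T_1$ and $T_1'$, keep the regularisations of the common factors, and use multilinearity of the smooth wedge product). Two small remarks on that step: in Definition \ref{DefinitionCollectionG} the current $R$ is \emph{not} regularised, only the quasi-potentials of the $(1,1)$ factors are; and to cover arbitrary pairs $\chi\in\mathcal{G}^*(T_1,\ldots)$, $\chi'\in\mathcal{G}^*(T_1',\ldots)$ in your final double supremum, you need the shared factors $T_2,\ldots ,T_p$ to be approximated by the \emph{same} sequences in both limits, a point your argument (and the paper's) passes over silently.

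The genuine gap is in your second step. To put $\chi+\chi'$ into the supremum defining $\Lambda (T_1+T_1',T_2,\ldots ,T_p,R)$ you must verify $\|\chi+\chi'\|_{neg}=\kappa (T_1+T_1',T_2,\ldots ,T_p,R)$, and the dangerous direction is the one you never address: the inclusion gives $\kappa (T_1+T_1',\ldots )\leq \kappa (T_1,\ldots )+\kappa (T_1',\ldots )$, but nothing you say rules out \emph{strict} inequality. The set $\mathcal{G}^*(T_1+T_1',\ldots )$ is built from \emph{all} good monotone approximations of a quasi-potential of $T_1+T_1'$, not only those splitting as sums of approximations of $T_1$ and $T_1'$, so it could a priori contain cluster points of strictly smaller negative mass; in that case $\chi+\chi'$ is simply not admissible and your final display fails. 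Your phrase ``no cancellation can reduce $(\chi ^-+\chi '^-)(1)$ below the value forced by cohomology'' aims at the wrong quantity: the cohomology class only fixes $\mu ^+(1)-\mu ^-(1)$, not $\mu ^-(1)$. What actually saves the statement is that the hypothesis should be read as $\kappa (T_1,T_2,\ldots ,T_p,R)=\kappa (T_1',T_2,\ldots ,T_p,R)=0$ (note $\|\cdot\|_{neg}\geq 0$ always, so ``$\geq 0$'' is vacuous as written; compare the ``$>0$'' in part 6 of Theorem \ref{TheoremLeastNegativeIntersection} and the role of $\kappa =0$ in Theorem \ref{TheoremPositiveWedgeIntersection}). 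Then the admissible $\chi ,\chi '$ are positive measures, $\chi +\chi '$ is positive, $\|\chi +\chi '\|_{neg}=0$, and $0\leq \kappa (T_1+T_1',\ldots )\leq \|\chi +\chi '\|_{neg}=0$ makes minimality automatic. With that substitution your argument closes; as you wrote it, the key step is asserted rather than proved.
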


Here we state one simple case where explicit calculations can be done.
\begin{proposition}
1) Let $X=\mathbb{P}^k$ be a projective space. Then the least negative intersection $\Lambda (T_1,\ldots ,T_p,R)$ in Theorem \ref{TheoremLeastNegativeIntersection} is always in $SM^+(X)$. 

2) Let $X=\mathbb{P}^2$, $D\subset \mathbb{P}^2$ be a line, and $[D]$ the current of integration on $D$. Then for all $\varphi \in C^0(X)$
\begin{eqnarray*}
\Lambda ([D],[D])(\varphi )=\sup _{D}\varphi . 
\end{eqnarray*}

3) Let $X=$ the blowup of $\mathbb{P}^2$ at a point $p$. Let $E$ be the exceptional divisor of the blowup, and $[E]$ the current of integration on $E$. Then $\{E\}.\{E\}=-1$ in cohomology, and for all $\varphi \in C^0(X)$

\begin{eqnarray*}
\Lambda ([E],[E])(\varphi )=\sup _{D}(-\varphi ).  
\end{eqnarray*}
\label{PropositionIntersectionProjectiveSpace}\end{proposition}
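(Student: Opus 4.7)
The plan is to exploit the explicit description of cohomology in each of the three settings.

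For part 1), on $\mathbb{P}^k$ every class in $H^{j,j}$ is a nonnegative multiple of $\{\omega\}^j$, so the intersection number $\{T_1\}\cdots\{T_p\}\cdot\{R\}$ is nonnegative. Taking Demailly (or unitary-equivariant) regularizations $T_{i,n}\searrow T_i$ that are smooth positive and preserve the cohomology classes, each $T_{1,n}\wedge\cdots\wedge T_{p,n}\wedge R$ is a positive smooth measure of mass converging to this nonnegative number. Every cluster point is therefore a positive measure of bounded mass, so every element of $\mathcal{G}^*$ has negative part $\chi^-=0$. The set of least-negative cluster points lies in $M^+(X)$, and Theorem~\ref{TheoremHahnBanach} places $\Lambda(T_1,\ldots,T_p,R)$ in $SM^+(X)$.

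For part 2), I would prove matching inequalities. For the upper bound, write $[D]=\omega+dd^cu$ with $u$ quasi-psh, smooth on $\mathbb{P}^2\setminus D$ and satisfying $dd^cu=-\omega$ there (so $[D]\equiv 0$ off $D$). Demailly's smooth approximation theorem implies that for any monotone regularization $u_n\searrow u$, the smooth forms $T_{i,n}=\omega+dd^cu_{i,n}\to[D]$ converge in $C^\infty$ on compact subsets of $\mathbb{P}^2\setminus D$; hence $T_{1,n}\wedge T_{2,n}\to 0$ there, and every cluster point is a positive measure on $D$ of mass $\{D\}^2=1$, giving $\Lambda([D],[D])(\varphi)\leq\sup_D\varphi$. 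For the reverse inequality, fix $x\in D$ and let $\{D_n\}$ be a pencil of lines through $x$ converging to $D$; since any two distinct lines in $\mathbb{P}^2$ meet in exactly one point, $[D_n]\wedge[D]=\delta_x$ for all $n$. A diagonal sequence of smooth approximations $T_{1,n}\to[D]$ (smoothing $[D_n]$) and $T_{2,n}\to[D]$ (smoothing $[D]$) then gives $T_{1,n}\wedge T_{2,n}\to\delta_x$, so $\delta_x\in\mathcal{G}^*$, and combining yields equality.

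For part 3), the same smoothness-off-$E$ argument shows cluster points of $T_{1,n}\wedge T_{2,n}$ are signed measures $\chi$ supported on $E$ with signed mass $\{E\}^2=-1$; writing $\chi=\chi^+-\chi^-$ forces $\chi^-(1)\geq 1$, with equality iff $\chi^+=0$ and $\chi^-$ is a probability measure on $E$. Hence the least-negative class is $\{-\nu:\nu\in M^+(E),\,\nu(1)=1\}$ and $\Lambda([E],[E])(\varphi)\leq\sup_E(-\varphi)$. For the matching lower bound, identify $x\in E$ with a tangent direction at $p$ and let $L_x\subset\mathbb{P}^2$ be a line through $p$ in that direction; its proper transform $\widetilde L_x$ meets $E$ transversely at $x$, and $\pi^*[L_x]=[\widetilde L_x]+[E]$. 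I would use this to build smooth approximations of $[E]$ (from smoothings $S_n\to[L_x]$ and $R_n\to[\widetilde L_x]$ combined as $\pi^*(S_n)-R_n$) so that the wedge with a Demailly regularization $T_{2,n}$ of $[E]$ decomposes into two pieces: the first, $\pi^*(S_n)\wedge T_{2,n}$, has zero cohomology intersection $\{H\}\cdot\{E\}=0$ and its cluster limit, supported on $E$ of total mass zero, vanishes; the second, $R_n\wedge T_{2,n}$, converges to the transverse intersection $[\widetilde L_x]\wedge[E]=\delta_x$. This produces the cluster point $-\delta_x$ in the least-negative class, giving $\Lambda([E],[E])(\varphi)\geq\sup_E(-\varphi)$.

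The main obstacle is the support step: showing that cluster points of $T_{1,n}\wedge T_{2,n}$ concentrate on the divisor $D$ (respectively $E$). This rests on Demailly's theorem that monotone quasi-psh regularizations converge in $C^\infty$ outside the unbounded locus of their limit, which must be invoked carefully under the precise formulation of $\mathcal{G}$ in Section~\ref{SectionIntersection}. A secondary technical issue in the realization arguments is the diagonal selection of regularization parameters needed to push the Bezout-style Dirac limits through the smoothings while remaining in the least-negative cluster set, together with verifying that the hybrid approximation in part 3) (mixing $\pi^*S_n$ and $R_n$) is admissible within the author's framework; once admitted, the signed-mass bookkeeping from cohomology pins down the minimality of the resulting cluster point.
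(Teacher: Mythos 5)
Your overall strategy coincides with the paper's: part 1) via positive smooth regularizations on $\mathbb{P}^k$ forcing $\kappa=0$; parts 2) and 3) via an upper bound from support-plus-mass bookkeeping and a lower bound realizing $\delta_x$ (resp.\ $-\delta_x$) as cluster points using Bézout intersections with lines through the chosen point. Part 1) and the upper bounds are essentially the paper's argument (though your appeal to ``$C^\infty$ convergence off $D$'' for arbitrary monotone regularizations is not what Demailly's theorem gives; the correct justification, and the one the paper uses, is that the intersection is classically defined on $X\setminus D$ where the potentials are locally bounded, so part 3) of Theorem \ref{TheoremLeastNegativeIntersection} applies).

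The genuine gap is in the lower bounds: you must exhibit $\delta_x$ and $-\delta_x$ as elements of $\mathcal{G}^*$, and by Definition \ref{DefinitionCollectionG} that means producing them as limits of wedges of \emph{good monotone approximations of the quasi-potentials of $[D]$ (resp.\ $[E]$) themselves}. Your proposed sequences are not of this form. Smoothing the lines $D_n$ of a pencil (or the single line $D_1\neq D$) gives potentials decreasing to the potential of $[D_n]$, not of $[D]$; and the hybrid $\pi^*(S_n)-R_n$ in part 3) is a difference of two decreasing sequences, hence not monotone. You flag this admissibility issue but do not resolve it, and it is exactly the crux. The paper's resolution is Example 3 together with Proposition \ref{PropositionPropertyCollectionG}: truncating one quasi-potential as $\max\{u,-n\}$ kills $dd^c$ of that factor in a neighborhood of the divisor, so the wedge collapses to $\Theta\wedge[D]$ for \emph{any} smooth closed $(1,1)$ form $\Theta$ cohomologous to $[D]$, and the independence of $\mathcal{G}$ from the choice of quasi-potential (Proposition \ref{PropositionPropertyCollectionG}, part 1) then puts every such $\Theta\wedge[D]$ into $\mathcal{G}([D],[D])$. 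With that lemma in hand, one takes $\Theta$ to be a positive smoothing of $[D_1]$ (part 2), or $\pi^*(\omega_n)-\Omega_0-dd^cv_n$ (part 3), and passes to the weak limit inside $\mathcal{G}^*$. Without this device, or an equivalent one, your diagonal constructions do not land in $\mathcal{G}^*$ and the lower bounds are unproven.
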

We will apply Theorem \ref{TheoremLeastNegativeIntersectionShortVersion} and Proposition \ref{PropositionIntersectionProjectiveSpace} to investigate the range of $\Lambda (T,T,\ldots ,T)$ for positive closed $(1,1)$ currents $T$ in Section \ref{SectionIntersection}. In particular, we propose the following conjectures, which roughly means that the range of the Monge-Ampere least negative intersection cannot contain too singular positive strong submeasures. 

{\bf Conjecture A.} Let $X$ be a compact K\"ahler manifold of dimension $k$. If there is a positive closed $(1,1)$ current $T$ on $X$ and a subset $A\subset X$ so that $\Lambda (T_1=T,T_2=T,\ldots ,T_k=T)\geq \sup _{x\in A}\delta _x$, then there is a proper subvariety $Z\subset X$ containing $A$. In particular, there is no positive closed $(1,1)$ current $T$ on $X$  so that $\Lambda (T_1=T,T_2=T,\ldots ,T_k=T)\geq \sup _{x\in X}\delta _x$. 

{\bf Conjecture B.} Let $X$ be a compact K\"ahler manifold, and let $\mu _1,\mu _2$ be two positive strong submeasures on $X$ so that $||\mu _1||=||\mu _2||$ and $\mu _1\geq \mu _2$. Assume that there is a positive closed $(1,1)$ current $T_1$ on $X$ so that $\Lambda (T_1,T_1,\ldots ,T_1)=\mu _1$. Then there is a positive closed $(1,1)$ current $T_2$ on $X$ so that $\Lambda (T_2,T_2,\ldots ,T_2)=\mu _2$.  

We end this subsection mentioning an application to dynamics of meromorphic maps. Let $f:X\dashrightarrow X$ be a dominant meromorphic map of a compact K\"ahler {\bf surface}. The study of dynamics of such maps is very active. It is now recognised that maps which are algebraic stable (those whose pullback on cohomology group is compatible with  iterates, that is $(f^n)^*=(f^*)^n$ on $H^{1,1}(X)$ for all $n\geq 0$) have good dynamical properties. An important indication of the complexity of such  maps is dynamical degrees defined as follows. Let $\lambda _1(f)$ be the spectral radius of the linear map $f^*:H^{1,1}(X)\rightarrow H^{1,1}(X)$ and let $\lambda _2(f)$ be the spectral radius of the linear map $f^*:H^{2,2}(X)\rightarrow H^{2,2}(X)$. There are two large interesting classes of such maps: those with large topological degree ($\lambda _2(f)>\lambda _1(f)$) and those with large first dynamical degree ($\lambda _1(f)>\lambda _2(f)$). The dynamics of the first class is shown in our paper \cite{dinh-nguyen-truong2} to be as nice as expected. For the second class, the most general result so far belongs to \cite{diller-dujardin-guedj1, diller-dujardin-guedj2, diller-dujardin-guedj3}, who showed the existence of canonical Green $(1,1)$ currents $T^+$ and $T^-$ for $f$, and who used potential theory to prove that the dynamics is nice (in particular, the wedge intersection $T^+\wedge T^-$ is well-defined as a positive measure) if the so-called finite energy conditions on the Green currents are satisfied. While these conditions are satisfied for many interesting subclasses, it is known however that in general they are false \cite{buff}. On the other hand, since it is known that $T^+$ has no mass on curves \cite{diller-dujardin-guedj1}, it follows from Theorem \ref{TheoremPositiveWedgeIntersection} that we have the following result. 
\begin{theorem}
Let $f:X\dashrightarrow X$ be a dominant meromorphic map of a compact K\"ahler surface which is algebraic stable and has $\lambda _1(f)>\lambda _2(f)$. Let $T^{+}$ and $T^{-}$ be the canonical Green $(1,1)$ currents of $f$. Then the least negative intersection $\Lambda (T^+,T^-)$ is in $SM^+(X)$. 
\label{TheoremDynamicsDimension2}\end{theorem}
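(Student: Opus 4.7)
The plan is to deduce this directly from the general wedge-intersection criterion stated as Theorem \ref{TheoremPositiveWedgeIntersection}, using the single extra geometric input that the canonical Green current $T^{+}$ puts no mass on curves. First I would verify the cohomological hypothesis: because $f$ is algebraically stable with $\lambda_1(f) > \lambda_2(f)$, the Green currents $T^{\pm}$ are eigenvectors of $f^{*}$ and $f_{*}$ respectively for the eigenvalue $\lambda_1(f)$, and the intersection number $\{T^{+}\}\cdot\{T^{-}\}$ is non-negative (indeed strictly positive after the standard normalization). Thus $\kappa_{T^{+},T^{-}}\geq 0$, so the least negative intersection $\Lambda(T^{+},T^{-})$ is of the kind covered by the positivity criterion, rather than the genuinely signed case illustrated in part~3) of Proposition~\ref{PropositionIntersectionProjectiveSpace}.

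The main step is to apply Theorem~\ref{TheoremPositiveWedgeIntersection} with $T_1 = T^{+}$ and $T_2 = T^{-}$. The hypothesis needed by that theorem is that one of the two currents has no mass on the proper analytic loci where the other may concentrate. Since $X$ is a surface, every proper analytic subset is a finite union of curves and isolated points, so the only obstruction to positivity is mass on curves. By the result of Diller--Dujardin--Guedj \cite{diller-dujardin-guedj1}, the canonical Green current $T^{+}$ has no mass on any analytic curve in $X$; this is precisely the vanishing assumption needed in Theorem~\ref{TheoremPositiveWedgeIntersection}. Feeding this into the theorem yields $\Lambda(T^{+},T^{-}) \in SM^{+}(X)$, as asserted.

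The step that will require the most care is the bookkeeping of how the ``no mass on curves'' hypothesis for just one of the two currents feeds into the hypothesis list of Theorem~\ref{TheoremPositiveWedgeIntersection}, since that theorem is stated for an arbitrary tuple $(T_1,\ldots,T_p,R)$; one needs to check that in the bidimension two setting it is enough for a single current in the wedge to avoid the curves, because any cluster point $\chi = \chi^{+} - \chi^{-}$ arising from monotone regularizations of $T^{+}\wedge T^{-}$ has its negative part $\chi^{-}$ supported on the common unbounded locus of potentials, which is contained in a countable union of curves, and hence must vanish against the (regularization-transported) mass of $T^{+}$. Aside from this verification, no further computation is needed; the remainder of the argument is a direct citation.
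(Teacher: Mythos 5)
Your proposal is correct and follows essentially the same route as the paper: the whole argument there is the single observation that $T^{+}$ has no mass on curves (by \cite{diller-dujardin-guedj1}), hence by Siu's theorem the set $E_{+}=\{x:\nu (T^{+},x)>0\}$ is at most countable, so the Hausdorff-dimension hypothesis of part 2) of Theorem \ref{TheoremPositiveWedgeIntersection} is satisfied, giving $\kappa (T^{+},T^{-})=0$ and then $\Lambda (T^{+},T^{-})\in SM^{+}(X)$ by part 1). One caveat: your opening paragraph, which deduces ``$\kappa _{T^{+},T^{-}}\geq 0$'' from the non-negativity of the cohomological intersection $\{T^{+}\}\cdot \{T^{-}\}$, is not the operative hypothesis --- the positivity criterion requires $\kappa =0$, which comes from the Lelong-number condition in your main step and not from cohomology, so that paragraph should simply be dropped.
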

At the moment we do not know whether $f_*(\Lambda (T^+,T^-))=\Lambda (T^+,T^-)$. However, by the discussion in the previous subsection, we can consider cluster points of Cesaro's average $\frac{1}{n}\sum _{j=1}^n(f_*)^j(\Lambda (T^+,T^-))$, and then obtain the associated invariant positive strong submeasures $\mu$. From such a $\mu$, we then can construct associated invariant measures of $\phi _f$. The significance these invariant positive strong submeasures and measures, together with their entropies, will be pursued in a future work. (See also Section 5 where we work out explicitly the cluster points of Cesaro's average, where we work with a transcendental holomorphic map of $\mathbb{C}^1$ and start with a probability measure $\mu $ on $\mathbb{P}^1$. In this case the only dynamically interesting invariant positive strong submeasure we can obtain is $\sup _{x\in \mathbb{P}^1}\delta _x$.) 

{\bf Remark.} We note a parallel between the least negative intersection and the tangent currents in this situation. Under the same assumptions as in Theorem \ref{TheoremDynamicsDimension2}, it was shown in our paper \cite{dinh-nguyen-truong} that the h-dimension (defined in \cite{dinh-sibony2}) between $T^+$ and $T^-$ is $0$, the best possible. 
 
\subsection{The case of transcendental maps on $\mathbb{C}$ and $\mathbb{C}^2$} 

In the discussion about entropy and invariant positive strong submeasures, the key property needed is that we have a compact metric space $X$, a dense open set $U\subset X$ and a continuous map $f:U\rightarrow X$ such that $f(U)$ contains a dense open subset of $X$. Therefore, the ideas we use before for entropy of meromorphic maps can be applied in much more general settings. For example, we can study these ergodic properties for transcendental maps on $\mathbb{C}$ and $\mathbb{C}^2$. 

Details will be given in Section 5. Here we just mention some main points. We would then consider $U=\mathbb{C}$ or $\mathbb{C}^2$ and compactifications $X$ of $U$. There are of course, many choices of $X$, but it is reasonable that we should choose $X$ to be a compact complex manifold. In the case $U=\mathbb{C}$, there is only one such choice, i.e.  $X=\mathbb{P}^1$. Moreover, $\mathbb{P}^1$ can be regarded as the simplest compactification of $\mathbb{C}$. In the case $U=\mathbb{C}^2$, we have many choices, that is $X=F_n$ a Hirzebruch's surface. Which $X$ is the correct one to choose? In fact, it is not obvious that $\mathbb{P}^2$, while in many aspects the simplest one geometrically, is always a good choice. For example, for the map $f(x,y)=(x+1,y^2)$, the entropy of the lift to $\mathbb{P}^2$ is $0$ (see e.g. \cite{guedj}) which is not the one we would like (the largest dynamical degree of $f$, which in this case is $\log 2$). It turns out that for this map, the correct compactification is $\mathbb{P}^1\times \mathbb{P}^1$. To resolve this, we will apply the ideas used in \cite{truong3} for entropy of meromorphic maps. 

\subsection{Why prefer submeasures?} In all our definitions above, we have used submeasures. However, there is also another natural generalisation of measures, that is supermeasures, which are the same as superlinear operators on $C^0(X)$. Which leads to the question: Why using submeasures but not supermeasures? We will now provide a brief explanation of why submeasures are better than supermeasures for the questions considered here. 

First, we look at the case of entropy for maps. In this case, submeasures go along very well with the Variational Principle. As we will see in Section 4, for any continuous map $f:X\rightarrow X$ of a compact metric space, there is a positive strong submeasure $\mu$ of mass $1$ so that $h_{\mu}(f)=h_{top}(f)$. If we use supermeasures instead, we would need to define measure entropy as follows: for a supermeasure $\mu$, we have $\tilde{h}_{\mu}:=\inf _{\nu \in M^+(X),~\nu \geq \mu}h_{\nu }(f)$. But then it is not true in general that there is a supermeasure $\mu$ of mass $1$ so that $\tilde{h}_{\mu}(f)=h_{top}(f)$. (In fact, there are examples of $f:X\rightarrow X$ such that $h_{top}(f)>h_{\nu}(f)$ for all invariant probability measure $\nu$.) Moreover, for a meromorphic map, it is not clear that the similar construction with either Cesaro's average procedure on $X$ or invariant measures on $\Gamma _{f,\infty}$ would lead to dynamically interesting invariant supermeasures. 

Second, we look at the case of intersection of positive closed $(1,1)$ currents. We see for example in Proposition \ref{PropositionIntersectionProjectiveSpace} that there is a clear and interesting answer using submeasures to self-intersection of the current of integration of some curves. If we use instead supermeasures, we will obtain rather uninteresting answers. For example, if $D\subset \mathbb{P}^2$ is a line, then the answer we obtain using supermeasures for the self-intersection of $[D]$ will be $\leq \inf _{x\in D}\varphi (x)$, and we do not know what is the exact answer.  Also, if we use supermeasures, then we do not know whether the $\Lambda (T^+,T^-)$ in \ref{TheoremDynamicsDimension2} will be positive, and hence whether it may be dynamically interesting. 

 \subsection{Organisation of the paper and Acknowledgments} In the next section we prove several basic properties of strong submeasures, together with results around the pullback and pushforward of strong submeasures by meromorphic maps and the observation that pullback and pushforward in (\ref{EquationSubmeasurePushforwardMeromorphic}) and (\ref{EquationSubmeasurePullbackMeromorphic}) can also be defined using any resolution of singularities of the graph $\Gamma _f$. In Section \ref{SectionIntersection}, we define least negative intersection of positive closed $(1,1)$ currents and prove Theorem \ref{TheoremLeastNegativeIntersection} and Proposition \ref{PropositionIntersectionProjectiveSpace} together with some other results. In Section \ref{SectionVariationalPrinciple}, we consider invariant positive strong submeasures of meromorphic maps and measure entropy. In Section 5 we apply these ideas to transcendental maps on $\mathbb{C}$ and $\mathbb{C}^2$. 

{\bf Acknowledgments.} The author would like to thank Lucas Kaufmann for an invitation for a visit to University of Gothenburg and the useful discussions there with him, Elizabeth Wulcan and David Witt Nystr\"om. We are grateful to Mattias Jonsson for suggesting the use of Hahn-Banach's theorem (Theorem \ref{TheoremHahnBanach}) as well as several other comments, which helped to greatly simplify the proofs and presentation of an earlier version of this paper. We would like also to thank Viet-Anh Nguyen for his comments and encouragement on \cite{truong}, and thank also to Jakob Hultgren for related discussions.

\section{Push-forward of positive strong submeasures by meromorphic maps}\label{SectionPullbackPushforward}
This section proves some basic properties of strong submeasures and results around pullback and pushforward by meromorphic maps. We will prove also the observation that pullback and pushforward in (\ref{EquationSubmeasurePushforwardMeromorphic}) and (\ref{EquationSubmeasurePullbackMeromorphic}) can also be defined using any resolution of singularities of the graph $\Gamma _f$. Some other results will also be proven, including Theorem \ref{TheoremPushforwardBlowup} in which we describe in more detail the pushforward map. 
 
First, we prove basic properties of strong submeasures.

 \begin{theorem} Let $X$ be a compact metric space. 

1) {\bf Weak-compactness.} Let $\mu _1,\mu _2,\ldots $ be a sequence in $SM(X)$ such that $\sup _{n}||\mu _n||<\infty$. Then there is a subsequence $\{\mu _{n(k)}\}_{k=1,2,\ldots }$ which weakly converges to some $\mu \in SM(X)$. If moreover $\mu _n\in SM^+(X)$, then so is $\mu$. 

2) If $\mu \in SM^+(X)$, then $||\mu ||=\max \{|\mu (1)|,|\mu (-1)|\}$. 

3) If $\mu _1,\mu _2\in SM(X)$ then $\max \{\mu _1,\mu _2\}$ and $\mu _1+\mu _2$ are also in $SM(X)$.  If $\mu _1,\mu _2\in SM^+(X)$ then $\max \{\mu _1,\mu _2\}$ and $\mu _1+\mu _2$ are also in $SM^+(X)$.
\label{TheoremSubmeasureBasicProperty}\end{theorem}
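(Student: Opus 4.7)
The plan is to dispatch the three parts in the order 3) $\to$ 2) $\to$ 1), since the algebraic closure properties are immediate and feed into the weak-compactness argument only indirectly (for invariance of the class $SM^+(X)$ under taking limits).

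For part 3), I would just verify the two defining conditions directly. Sub-linearity of $\mu_1+\mu_2$ follows by adding the two sub-linearity inequalities; positive homogeneity is immediate. For $\max\{\mu_1,\mu_2\}$, sub-linearity uses the elementary bound $\max\{a_1+b_1,a_2+b_2\}\le \max\{a_1,a_2\}+\max\{b_1,b_2\}$, and $\max\{\lambda \mu_1(\varphi),\lambda\mu_2(\varphi)\}=\lambda\max\{\mu_1(\varphi),\mu_2(\varphi)\}$ for $\lambda\ge 0$. Boundedness is controlled by $\|\mu_1\|+\|\mu_2\|$ and $\max\{\|\mu_1\|,\|\mu_2\|\}$ respectively (using $|\max\{a,b\}|\le\max\{|a|,|b|\}$). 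Monotonicity (positivity) transfers trivially to both operations.

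For part 2), I would first observe that $\mu(0)=0$ by the positive-homogeneity condition with $\lambda=0$, hence by monotonicity $\mu(1)\ge 0\ge \mu(-1)$. Given any $\varphi\in C^0(X)$ with $\|\varphi\|_{L^\infty}\le 1$, the inequalities $-1\le\varphi\le 1$ and monotonicity give $\mu(-1)\le\mu(\varphi)\le\mu(1)$, so $|\mu(\varphi)|\le\max\{|\mu(1)|,|\mu(-1)|\}$. By positive homogeneity this extends to arbitrary $\varphi$ with the scaling $\|\varphi\|_{L^\infty}$, showing $\|\mu\|\le\max\{|\mu(1)|,|\mu(-1)|\}$. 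The reverse inequality is immediate from the definition of $\|\mu\|$.

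For part 1), which is the main obstacle, I would exploit separability of $C^0(X)$ (valid since $X$ is a compact metric space) and the uniform Lipschitz continuity of the $\mu_n$ noted in the introduction, namely $|\mu_n(\varphi)-\mu_n(\psi)|\le \|\mu_n\|\cdot\|\varphi-\psi\|_{L^\infty}\le M\|\varphi-\psi\|_{L^\infty}$ where $M:=\sup_n\|\mu_n\|$. Fix a countable dense subset $\{\varphi_k\}_{k\ge 1}$ of $C^0(X)$. For each $k$ the sequence $\{\mu_n(\varphi_k)\}_{n}$ lies in the bounded interval $[-M\|\varphi_k\|_{L^\infty},M\|\varphi_k\|_{L^\infty}]$, so a standard diagonal extraction yields a subsequence $\{\mu_{n(j)}\}$ such that $\mu_{n(j)}(\varphi_k)$ converges for every $k$. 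The uniform Lipschitz bound then propagates convergence to all $\varphi\in C^0(X)$: given $\varphi$ and $\varepsilon>0$, pick $\varphi_k$ with $\|\varphi-\varphi_k\|_{L^\infty}<\varepsilon$, so that $|\mu_{n(j)}(\varphi)-\mu_{n(i)}(\varphi)|\le 2M\varepsilon+|\mu_{n(j)}(\varphi_k)-\mu_{n(i)}(\varphi_k)|$, and the right side is less than $3M\varepsilon$ for $i,j$ large. Define $\mu(\varphi):=\lim_j\mu_{n(j)}(\varphi)$. Sub-linearity, positive homogeneity, boundedness by $M$, and monotonicity all pass to pointwise limits directly, giving $\mu\in SM(X)$, respectively $\mu\in SM^+(X)$ in the positive case.

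The only step with any subtlety is propagating convergence from the countable dense set to all of $C^0(X)$, and this is handled uniformly by the a priori Lipschitz estimate with the single constant $M$; no Hahn--Banach or measure-theoretic input is needed here.
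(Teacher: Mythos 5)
Your proof is correct and follows essentially the same route as the paper: part 1) via separability of $C^0(X)$, a diagonal extraction on a countable dense set, and the uniform Lipschitz bound $|\mu_n(\varphi)-\mu_n(\psi)|\le M\|\varphi-\psi\|_{L^\infty}$ to propagate convergence; part 2) by squeezing $\mu(\varphi)$ between $\mu(\pm\|\varphi\|_{L^\infty})=\|\varphi\|_{L^\infty}\mu(\pm 1)$; part 3) by direct verification (which the paper dismisses as obvious). You merely supply more detail in the places the paper leaves implicit, such as the Cauchy argument on a general $\varphi$ and the elementary inequalities for $\max\{\mu_1,\mu_2\}$.
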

 \begin{proof}[Proof of Theorem \ref{TheoremSubmeasureBasicProperty}]

1) Since $X$ is a compact metric space, the space $C^0(X)$, equipped with the $L^{\infty}$ norm,  is separable. Therefore, there is a countable set $\varphi _1,\varphi _2,\ldots $ which is dense in $C^0(X)$. Because $\sup _{n}||\mu _n||=C<\infty$, for each $j$ the sequence $\{\mu _n(\varphi _j)\}_{n=1,2,\ldots }$ is bounded. Therefore, using the diagonal argument, we can find a subsequence $\{\mu _{n(k)}\}_{k=1,2,\ldots }$  so that for all $j$ the following limit exists:
\begin{eqnarray*}
\lim _{k\rightarrow\infty}\mu _n(\varphi _j)=: \mu (\varphi _j).
\end{eqnarray*}
As observed in the introduction, the fact that $\mu _n$ is sublinear and bounded implies that it is Lipschitz continuous: $|\mu _n(\varphi )-\mu _n(\psi )|\leq ||\mu _n||\times ||\varphi -\psi ||\leq C||\varphi -\psi ||$ for all $n$ and all $\varphi ,\psi \in C^0(X)$.  Then from the fact that $\{\varphi _j\}_{j=1,2,\ldots }$ is dense in $C^0(X)$, it follows that for all $\varphi \in C^0(X)$, the following limit exists: 
 \begin{eqnarray*}
\lim _{k\rightarrow\infty}\mu _n(\varphi )=: \mu (\varphi ).
\end{eqnarray*}
It is then easy to check that $\mu $ is also a strong submeasure, and if $\mu _n$ are all positive then so is $\mu$.

2) For any $\varphi \in C^0(X)$ we have $-||\varphi ||_{L^{\infty}}\leq \varphi \leq ||\varphi ||_{L^{\infty}}$. Therefore, since $\mu $ is positive, we have $\mu (-||\varphi ||_{L^{\infty}})\leq \mu (\varphi )\leq \mu (||\varphi ||_{L^{\infty}})$. By the sub-linearity of $\mu$, we have $\mu (-||\varphi ||_{L^{\infty}})=||\varphi ||_{L^{\infty}}\mu (-1)$ and $\mu (||\varphi ||_{L^{\infty}})=||\varphi ||_{L^{\infty}}\mu (1)$. Therefore, $|\mu (\varphi )|\leq ||\varphi ||_{L^{\infty}}\max \{|\varphi (-1)|,|\varphi (1)|\}$. Therefore, $||\mu ||\leq \max\{|\varphi (-1)|, |\varphi (1)|\}$. The reverse inequality follows from the fact that $||-1||_{L^{\infty}}=||1||_{L^{\infty}}=1$. 

3) This is obvious. 
\end{proof}

\begin{theorem} Let $X$ be a compact metric space and $\mu \in SM(X)$. Let $E(\mu ):BUS(X)\rightarrow [-\infty , \infty )$ be defined as in (\ref{EquationSubmeasureUpperSemicontinuous}).  Assume that $E(\mu )(0)$ is finite. We have: 

1) For all $\varphi \in BUS(X)$, the value $E(\mu )(\varphi )$ is finite. Moreover, $E(\mu )(0)=0$ and $E(\mu )(-1)\geq -\mu (1)$. 

2) {\bf Extension.} If $\mu$ is {\bf positive}, then for all $\varphi \in C^0(X)$ we have $E(\mu )(\varphi )=\mu (\varphi )$.  

3) Moreover, $E(\mu )$ satisfies the following properties 

 i) {\bf Sub-linearity.} $E(\mu) (\varphi _1+\varphi _2)\leq E(\mu )(\varphi _1)+E(\mu) (\varphi _2)$ and $E(\mu )(\lambda \varphi )=\lambda E(\mu )(\varphi )$ for $\varphi _1,\varphi _2,\varphi \in BUS(X)$ and a non-negative constant $\lambda$. 

ii ) {\bf Positivity.} $E(\mu )(\varphi _1)\geq E(\mu )(\varphi _2)$ for all $\varphi _1,\varphi _2\in BUS(X)$ satisfying $\varphi _1\geq \varphi _2$. 

iii) {\bf Boundedness.} There is a constant $C>0$ so that for all $\varphi \in BUS(X)$ we have $|E(\mu )(\varphi )|\leq C||\varphi ||_{L^{\infty}}$. The least such constant $C$ is in fact $||\mu ||$.

4) If $A_1,A_2$ are closed subsets of $X$ then $\mu (A_1\cup A_2)\leq \mu (A_1)+\mu (A_2)$. Likewise, if $B_1,B_2$ are open subsets of $X$ then $\mu (B_1\cup B_2)\leq \mu (B_1)+\mu (B_2)$.  
\label{TheoremSubmeasureUpperSemicontinuous}\end{theorem}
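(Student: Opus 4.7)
The plan is to exploit the scaling invariance of strong submeasures under positive scalars to extract a hidden positivity from the finiteness hypothesis on $E(\mu)(0)$, and then to transfer the sublinear, monotone and bounded structure of $\mu$ on $C^0(X)$ to $E(\mu)$ on $BUS(X)$ through infima over continuous majorants.

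First, for part 1), the key observation is that the finiteness of $E(\mu)(0)$ forces $\mu(\varphi)\geq 0$ for every continuous $\varphi\geq 0$. Indeed, if $\mu(\varphi_0)=-\epsilon<0$ for some continuous $\varphi_0\geq 0$, then each $\lambda\varphi_0$ lies in $C^0(X,\geq 0)$, and by positive homogeneity $\mu(\lambda\varphi_0)=-\lambda\epsilon\to-\infty$ as $\lambda\to\infty$, contradicting the finiteness of $E(\mu)(0)=\inf_{\psi\in C^0(X,\geq 0)}\mu(\psi)$. Since $\mu(0)=0$ and $0\in C^0(X,\geq 0)$, this hidden positivity yields $E(\mu)(0)=0$. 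For a general $\varphi\in BUS(X)$ with $|\varphi|\leq M$, the constant $M$ is a continuous majorant, so $E(\mu)(\varphi)\leq\mu(M)=M\mu(1)<\infty$; conversely, for any continuous $\psi\geq\varphi\geq -M$ the function $\psi+M$ is continuous and non-negative, so $\mu(\psi+M)\geq 0$, and sub-additivity gives $\mu(\psi)\geq\mu(\psi+M)-\mu(M)\geq -M\mu(1)$, hence $E(\mu)(\varphi)\geq -M\mu(1)>-\infty$. Specializing to $\varphi=-1$ with $M=1$ recovers $E(\mu)(-1)\geq -\mu(1)$.

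For part 2) I would simply observe that for positive $\mu$ the choice $\psi=\varphi$ gives $E(\mu)(\varphi)\leq\mu(\varphi)$, while monotonicity of $\mu$ yields $\mu(\psi)\geq\mu(\varphi)$ for every $\psi\in C^0(X,\geq\varphi)$, proving equality. For part 3), sub-linearity 3)(i) follows from the fact that if $\psi_i\in C^0(X,\geq\varphi_i)$ then $\psi_1+\psi_2\in C^0(X,\geq\varphi_1+\varphi_2)$ and $\mu(\psi_1+\psi_2)\leq\mu(\psi_1)+\mu(\psi_2)$, whereupon infima in $\psi_1$ and $\psi_2$ taken independently give $E(\mu)(\varphi_1+\varphi_2)\leq E(\mu)(\varphi_1)+E(\mu)(\varphi_2)$; positive homogeneity is the change of variable $\psi=\lambda\xi$ for $\lambda>0$, with the $\lambda=0$ case covered by $E(\mu)(0)=0$. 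Positivity 3)(ii) is immediate because $\varphi_1\geq\varphi_2$ implies $C^0(X,\geq\varphi_1)\subset C^0(X,\geq\varphi_2)$, so the infimum over the smaller set is larger. Boundedness 3)(iii) with $|E(\mu)(\varphi)|\leq\mu(1)\,||\varphi||_{L^{\infty}}$ is the combination of the bounds in paragraph one; the optimal constant equals $||\mu||$ upon restricting to continuous $\varphi$, where by part 2) one recovers $\mu$ and its operator norm, invoking Theorem \ref{TheoremSubmeasureBasicProperty}.

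Finally, for part 4), the closed case is an immediate application of 3): $1_{A_1\cup A_2}\leq 1_{A_1}+1_{A_2}$ pointwise with both sides in $BUS(X)$, so the monotonicity and sub-linearity of $E(\mu)$ give $\mu(A_1\cup A_2)\leq\mu(A_1)+\mu(A_2)$. For the open case, the main step is to decompose any compact $A\subset B_1\cup B_2$ as $A=A_1\cup A_2$ with $A_i\subset B_i$ compact. I would do this by setting $K_1:=A\setminus B_2$ and $K_2:=A\setminus B_1$, which are disjoint compact subsets of $X$, and invoking Urysohn's lemma to obtain a continuous $\rho:X\to[0,1]$ with $\rho|_{K_1}=0$ and $\rho|_{K_2}=1$; then $A_1:=A\cap\{\rho\leq 2/3\}\subset B_1$ and $A_2:=A\cap\{\rho\geq 1/3\}\subset B_2$ are compact with $A_1\cup A_2=A$. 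Combining with the closed case gives $\mu(A)\leq\mu(A_1)+\mu(A_2)\leq\mu(B_1)+\mu(B_2)$ from the defining supremum of $\mu$ on open sets, and taking the supremum over $A$ concludes. The only genuinely non-routine step in the whole argument is the positive-scaling trick of paragraph one, which extracts the implicit positivity on non-negative continuous functions hidden in the finiteness hypothesis on $E(\mu)(0)$; everything else is a routine transfer of the structure of $\mu$ to $E(\mu)$ through continuous majorants together with a standard Urysohn decomposition.
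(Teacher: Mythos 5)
Your proposal is correct and follows essentially the same route as the paper: the same scaling trick ($\lambda\varphi_0\to\infty$) extracts the hidden positivity of $\mu$ on $C^0(X,\geq 0)$ from the finiteness of $E(\mu)(0)$, and the remaining parts transfer sub-linearity, monotonicity and boundedness through infima over continuous majorants exactly as in the paper. The only deviation is in part 4's open case, where you decompose a compact $A\subset B_1\cup B_2$ via Urysohn's lemma applied to the disjoint compact sets $A\setminus B_2$ and $A\setminus B_1$ instead of the paper's finite cover by closed balls; this is a clean, equivalent alternative.
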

\begin{proof}[Proof of Theorem \ref{TheoremSubmeasureUpperSemicontinuous}]
1) We first observe that for all $\varphi \in C^0(X,\geq 0)$ then $\mu (\varphi )\geq 0$. (Note that as observed in Section 1, this fact alone does not imply that $\mu$ is a positive strong submeasure.) In fact, otherwise, there would be $\varphi _0\in C^0(X,\geq 0)$ so that $\mu (\varphi _0)<0$. Then by the definition of $E(\mu )$ and sublinearity of $\mu$ we have
\begin{eqnarray*}
E(\mu )(0)\leq \inf _{n\in \mathbb{N}}\mu (n\varphi _0)=\inf _{n\in \mathbb{N}}n\mu (\varphi _0)=-\infty , 
\end{eqnarray*}
which is a contradiction with the assumption that $E(\mu )(0)$ is finite. 

Therefore, if $\varphi \in C^0(X,\geq 0)$, we obtain
\begin{eqnarray*}
0\leq \inf _{\psi \in C^0(X,\geq \varphi )}\mu (\psi )\leq \mu (\varphi ).
\end{eqnarray*}
Therefore, for these functions $\varphi$ we have $E(\mu )(\varphi )$ is a finite number. In particular, $0\leq E(\mu )(1)\leq \mu (1)$.  

Next, we observe that if $\varphi _1,\varphi _2\in BUS(X)$ such that either $E(\mu )(\varphi _1)$ or $E(\mu )(\varphi _2)$ is finite, then the proof of 3i) is still valid and gives $E(\mu )(\varphi _1+\varphi _2)\leq E(\mu )(\varphi _1)+E(\mu )(\varphi _2)$. Apply this sub-linearity to $\varphi _1=\varphi _2=0$ we obtain $E(\mu )(0)=E(\mu) (0+0)\leq 2E(\mu )(0)$, which implies $E(\mu )(0)\geq 0$. On the other hand, $E(\mu )(0)\leq \mu (0)=0$. Therefore, $E(\mu )(0)=0$. 
 
Since $E(\mu )(1)$ is finite, applying the above sub-linearity for $\varphi _1=1$ and $\varphi _2=-1$, we obtain $0=E(0)=E(\mu )(1+(-1))\leq E(\mu )(1)+E(\mu )(-1)$. Therefore, $E(\mu )(-1)\geq -E(\mu )(1)\geq -\mu (1)$.  

Finally, applying the proof of part 3iii) we deduce that for all $\varphi \in BUS(X)$, the number $E(\mu )(\varphi )$ is finite. 

2) Let $\varphi \in C^0(X)$, and choose any $\psi \in C^0(X,\geq \varphi )$. Since $\mu $ is positive, we have by definition that $\mu (\psi )\geq \mu (\varphi )$. Since $\varphi $ is itself contained in $C^0(X,\geq \varphi )$, it follows that 
\begin{eqnarray*}
E(\mu )(\varphi )=\inf _{\psi \in C^0(X,\geq \varphi )}\mu (\psi )=\mu (\varphi ). 
\end{eqnarray*}

3) Let $\varphi , \varphi _1,\varphi _2\in BUS(X)$. 

i) If $\psi _1\in C^0(X,\geq \varphi _1)$ and $\psi _2\in C^0(X,\geq \varphi _2)$ then $\psi _1+\psi _2\in C^0(X,\geq \varphi _1+\varphi _2)$. Therefore, by sub-linearity of $\mu$:
\begin{eqnarray*}
E(\mu )(\varphi _1+\varphi _2)=\inf _{\psi \in C^0(X,\geq \varphi _1+\varphi _2)}\mu (\psi )\leq \mu (\psi _1 + \psi _2)\leq \mu (\psi _1)+\mu (\psi _2). 
\end{eqnarray*}
We can choose $\psi _1$ and $\psi _2$ so that $\mu (\psi _1)$ is arbitrarily close to $E(\mu )(\varphi _1)$ and $\mu (\psi _2)$ is arbitrarily close to $E(\mu )(\varphi _2)$, and from that obtain the desired conclusion $E(\mu )(\varphi _1+\varphi _2)\leq E(\mu )(\varphi _1)+E(\mu )(\varphi _2)$. The other part of i) is easy to check. 

ii) If $\varphi _1\geq \varphi _2$ then $C^0(X,\geq \varphi _1)\subset C^0(X,\geq \varphi _2)$. From this the conclusion follows. 

iii)  We observe that we can find $\psi \in C^0(X,\geq \varphi )$ so that $||\psi ||_{L^{\infty}}=||\varphi ||_{L^{\infty}}$, simply by defining $\psi =\max\{\min \{\psi _0, ||\varphi ||_{L^{\infty}}\},-||\varphi ||_{L^{\infty}}\} $ for any $\psi _0\in C^0(X,\geq \varphi )$. Then $$E(\mu )(\varphi )\leq \mu (\psi )\leq ||\mu ||\times ||\psi ||_{L^{\infty}}=||\mu ||\times ||\varphi ||_{L^{\infty}}.$$ 
By the positivity of $E(\mu )$ in ii), we have $$E(\mu )(\varphi )\geq ||\varphi  ||_{L^{\infty}}E(\mu )(-1 ),$$
and hence $|E(\mu )(\varphi )|\leq \max \{|E(\mu )(-1)|,||\mu ||\}=||\mu ||$. In the last equality we used that 1) and positivity imply $-\mu (1)\leq E(\mu ) (-1)\leq E(\mu )(0)=0$.   

4) By definition we have for closed subsets $A_1,A_2\subset X$ 
\begin{eqnarray*}
\mu (A)= E(\mu )(1_{A_1\cup A_2})\leq E(\mu )(1_{A_1}+1_{A_2})\leq E(\mu )(1_{A_1})+E(\mu )(1_{A_2})=\mu (A_1)+\mu (A_2). 
\end{eqnarray*}
In the first inequality we used $1_{A_1\cup A_2}\leq 1_{A_1}+1_{A_2}$ and the positivity of $E(\mu )$. In the second inequality we used the sub-linearity of $E(\mu )$. 

If $B_1,B_2$ are open subsets of $X$ and $A\subset B_1\cup B_2$ is closed in $X$, then since $X$ is compact metric we can find closed subsets $A_1,A_2$ of $X$ so that $A_1\subset B_1$,  $A_2\subset B_2$ and $A_1\cup A_2=A$. To this end, for each $x\in A$, we choose an open ball $B(x,r_x)$ (in the given metric on $X$) where $r_x>0$ is chosen as follows: if $x\in B_1$ then the closed ball $\overline{B(x,r_x)}$ belongs to $B_1$, if $x\in B_2$ then the closed ball $\overline{B(x,r_x)}$ belongs to $B_2$, and if $x\in B_1\cap B_2$ then the closed ball $\overline{B(x,r_x)}$ belongs to $B_1\cap B_2$. Since $A$ is compact, there is a finite number of such balls covering $A$: $A\subset \bigcup _{i=1}^mB(x_i,r_i)$. Then the choice of $A_1=A\cap (\bigcup _{x_i\in B_1} \overline{B(x_i,r_i)})$  and $A_2=A\cap (\bigcup _{x_i\in B_2} \overline{B(x_i,r_i)})$ satisfies the requirement. Then from the above sub-linearity of $\mu $ for compact sets and the definition, we have also sub-linearity for open sets $\mu (B_1\cup B_2)\leq \mu (B_1)+\mu (B_2)$.   

\end{proof}

\begin{proof}[Proof of Proposition \ref{PropositionUpperSemicontinuousExtension}] 

1) Since $g$ is bounded, it is clear that $E(g)$ is also bounded. By definition, it is clear that $E(g)|_U=g$. 

Next we show that $E(g)$ is upper-semicontinuous. If $x\in U$, then there is a small ball $B(x,r)\subset U$, and hence it can be seen that
\begin{eqnarray*}
\limsup _{y\in X\rightarrow x}E(g)(y)=\limsup _{y\in U\rightarrow x}E(g)(y)=\limsup _{y\in U\rightarrow x}g(y)\leq g(x),
\end{eqnarray*}
since $g$ is upper-semicontinuous on $U$. 

It remains to check that if $x\in X\backslash U$, and $x_n\in X\rightarrow x$ then 
\begin{eqnarray*}
\limsup _{n\rightarrow \infty}E(g)(x_n)\leq E(g)(x).
\end{eqnarray*}
To this end, choose $y_n\in U$ so that $d(y_n,x_n)\leq 1/n$ (here $d(.,.)$ is the metric on $X$) and $|g(y_n)-E(g)(x_n)|\leq 1/n$ for all $n$. Then $y_n\rightarrow x$, and hence
\begin{eqnarray*}
\limsup _{n\rightarrow\infty}E(g)(x_n)=\limsup _{n\rightarrow\infty}g(y_n)\leq E(g)(x). 
\end{eqnarray*} 

2) Using the assumption that $g$ is continuous on $U$ and $U_1\subset U$, we first check easily that $E(g_1)|_{U}=g=E(g)|_{U}$, and then the equality $E(g_1)=E(g)$ on the whole of $X$. 

3) Finally, note that if $g_1$ and $g_2$ are two upper-semicontinuous functions then $g_1+g_2$ is also upper-semicontinuous, and the inequality $E(g_1+g_2)\leq E(g_1)+E(g_2)$ follows from properties of limsup. 
\end{proof}

Now we turn to properties of the pullback and pushforward operators by meromorphic maps. We will first show that the upper-semicontinuous pushforwards $(\pi _{X,f})_*$ and $(\pi _{Y,f})_*$ on functions in $C^0(\Gamma _f)$ indeed produce upper-semicontinuous functions on $X$ and $Y$ as intended. Then we prove the observation that pullback and pushforward in (\ref{EquationSubmeasurePushforwardMeromorphic}) and (\ref{EquationSubmeasurePullbackMeromorphic}) can also be defined using any resolution of singularities of the graph $\Gamma _f$. Finally, we prove Proposition \ref{PropositionUpperSemicontinuousExtension} and Theorem \ref{TheoremSubmeasurePushforwardMeromorphic}, and give an example showing that the inequality in part 3) of Theorem \ref{TheoremSubmeasurePushforwardMeromorphic} may be strict in general. 

We recall the setting. We have a dominant meromorphic map $f:X\dashrightarrow Y$, its graph $\Gamma _f\subset X\times Y$, and two induced projections $\pi _{X,f},\pi _{Y,f}:~\Gamma _f\rightarrow X,Y$. We let $U\subset X$ be a Zariski  open dense set so that $\pi _{U,f}=\pi _{X,f}|_{\pi _{X,f}^{-1}(U)}:~\pi _{X,f}^{-1}(U)\rightarrow U$ is a finite covering. If $\dim (X)=\dim (Y)$, then there exists similarly $W\subset Y$ a Zariski open dense set so that $\pi _{W,f}=\pi _{Y,f}|_{\pi _{Y,f}^{-1}(W)}:~\pi _{Y,f}^{-1}(W)\rightarrow U$ is a finite covering.  

Let $\varphi \in C^0(\Gamma _f)$. Then obviously $(\pi _{U,f})_*(\varphi )$ is a continuous function on $U$, and hence upper-semicontinuous. Therefore, $(\pi _{X,f})_*(\varphi )=E((\pi _{U,f})_*(\varphi ))$ is an upper-semicontinuous on $X$. Part 2) of Proposition \ref{PropositionUpperSemicontinuousExtension} shows that $(\pi _{X,f})_*(\varphi )$ is independent of the choice of $U$. 

Now if $\dim (X)=\dim (Y)$, we can consider the upper-semicontinuous pushforward $(\pi _{Y,f})_*(\varphi )$.  Recall that for $y\in W$, we have
\begin{eqnarray*}
(\pi _{W,f})_*(\varphi )(y)=\sum _{z\in \pi _{W,f}^{-1}(y)}\varphi (z). 
\end{eqnarray*}
 Even though the map $\pi _{(W,f)}$ is not an isomorphism, it is a finite covering map of degree $\deg (f)$. Moreover, if $y_n\in Y\rightarrow y$, then when counted  {\bf with multiplicities} $\pi _{W,f}^{-1}(y_n)$ converge to $\pi _{W,f}^{-1}(y)$. Therefore we also have that $(\pi _{W,f})_*(\varphi )$ is continuous on $W$, and again $(\pi _{Y,f})_*(\varphi )=E((\pi _{W,f})_*(\varphi )(y))$ is upper-semicontinuous. Again, part 2) of Proposition \ref{PropositionUpperSemicontinuousExtension} shows that $(\pi _{Y,f})_*(\varphi )$ is independent of the choice of $W$. 

Now let $\Gamma $ be any compact complex variety with surjective holomorphic maps $\pi :~\Gamma \rightarrow X$ and $h:~\Gamma \rightarrow Y$, so that $\pi $ is bimeromorphic and $f=h\circ \pi ^{-1}$. Let $U_1\subset X$ be a Zariski open dense set so that $\pi _{U_1}=\pi :\pi ^{-1}(U_1)\rightarrow U_1$ is a finite covering, and $W_1\subset X$ be a Zariski open dense set so that $h _{W_1}=h:~h ^{-1}(W_1)\rightarrow W_1$ is a finite covering. We have $(h_{W_1})_*\pi ^*(\varphi )$ is continuous on $W_1$ and $(\pi _{W,f})_*\pi _{X,f}^*(\varphi )$ is continuous on $W$, and they are the same on $W_1\cap W$ which is itself a Zariski open dense set.  Using part 2) of Proposition \ref{PropositionUpperSemicontinuousExtension} again, we have that  $E((h_{W_1})_*\pi ^*(\varphi ))$ and $E((\pi _{W,f})_*\pi _{X,f}^*(\varphi ))$ are the same.  

The above observation shows that the value $f_*(\mu )(\varphi )$ can also be defined when we use any such $\Gamma$, for example any resolution of singularities of the graph $\Gamma _f$. Similarly, we can also show that the value $f^*(\mu )(\varphi )$ can also be defined when we use any such $\Gamma$.  

\begin{theorem} Let $f:X\dashrightarrow Y$ be a dominant meromorphic map of compact complex varieties. Let $g: Y\dashrightarrow Z$ be another dominant meromorphic map of compact complex varieties. When considering pullback of strong positive submeasures we will assume moreover that $X$ and $Y$ have the same dimension. 

1) We have $f^*(SM^+(Y))\subset SM^+(X)$ and $f_*(SM^+(X))\subset SM^+(Y)$. Moreover, if $\mu \in SM^+(X)$ and $\nu \in SM^+(Y)$, then $f_*(\mu )(\pm 1)=\mu (\pm 1)$ and $f^*(\nu )(\pm 1)=\deg (f)\nu (\pm 1)$. Here $\deg (f)$ is the topological degree of $f$, that is the number of inverse images by $f$ of a generic point in $Y$.  In particular, $||f_*(\mu )||=||\mu ||$ and $||f^*(\nu )||=\deg (f)||\nu ||$. 

2) For all $\mu \in SM^+(X)$ we have $g_*f_*(\mu )\geq (g\circ f)_*(\mu )$. If $f$ and $g$ are {\bf holomorphic}, then equality happens. 
 
3) For all $\mu \in SM^+(Z)$ we have $(g\circ f)^*(\mu )\leq f^*g^*(\mu )$. If moreover, $f$ and $g$ are {\bf holomorphic}, and $g$ is {\bf bimeromorphic}, then equality happens.  

4) If $\mu _n\in SM^+(X)$ weakly converges to $\mu$,  and $\nu$ is a cluster point of $f_*(\mu _n)$, then $\nu \leq f_*(\mu )$. If $f$ is holomorphic, then $\lim _{n\rightarrow\infty}f_*(\mu _n)=f_*(\mu )$.  

5) If $\mu$ is a positive measure without mass on $I(f)$, then $f_*(\mu )$ is the same as the usual definition. 

6) For any positive strong submeasure $\mu$, we have $f_*(\mu )=\sup _{\chi \in \mathcal{G}(\mu)}f_*(\chi )$, where $\mathcal{G}(\mu )=\{\chi :$ $\chi $ is a measure and $\chi \leq \mu \}$. 

7) For every positive strong submeasures $\mu _1,\mu _2$, we have $f_*(\mu _1+\mu _2)\geq f_*(\mu _1)+f_*(\mu _2)$. 
\label{TheoremSubmeasurePushforwardMeromorphic}\end{theorem}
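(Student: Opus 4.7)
The plan is to verify the seven parts in order, using the infimum formulas defining $f_*$ and $f^*$, the sub-additivity of the upper-semicontinuous extension $E$ from Proposition \ref{PropositionUpperSemicontinuousExtension}, and the Hahn--Banach representation in Theorem \ref{TheoremHahnBanach}. For part 1, sublinearity, boundedness and positivity of $f_*(\mu)$ and $f^*(\nu)$ propagate from $\mu$ and $\nu$ via the monotone and sub-additive behaviour of the defining infima. The norms come from $\pi_{Y,f}^*(\pm 1)=\pm 1$ and $(\pi_{X,f})_*(\pm 1)=\pm 1$ (the fibres of $\pi_{X,f}$ are non-empty), giving $f_*(\mu)(\pm 1)=\mu(\pm 1)$ by positivity; likewise $(\pi_{Y,f})_*(\pm 1)(y)=\pm\deg(f)$ for generic $y$ yields $f^*(\nu)(\pm 1)=\deg(f)\,\nu(\pm 1)$. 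Part 4 follows by passing to the limit in $f_*(\mu_n)(\varphi)\leq\mu_n(\psi)$ for each $\psi\in C^0(X,\geq f^*\varphi)$ and then taking infimum; holomorphicity of $f$ makes $f^*\varphi$ continuous and the convergence immediate. Part 5 is Levi's monotone convergence against a sequence $\psi_n\downarrow f^*\varphi$: since $\mu$ has no mass on $I(f)$, the integral $\int f^*\varphi\,d\mu$ coincides with both $f_*(\mu)(\varphi)$ and the classical pushforward.

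For parts 2 and 3 the approach is to unfold both sides as iterated infima and reduce to a comparison of upper-semicontinuous functions on $X$. In part 2, if $\eta\in C^0(X)$ satisfies $\eta\geq f^*\psi$ for some $\psi\in C^0(Y)$ with $\psi\geq g^*\varphi$, then monotonicity of the upper-semicontinuous pullback (inherited from $E$) gives $\eta\geq f^*(g^*\varphi)$, so the inequality reduces to the key claim $f^*(g^*\varphi)\geq (g\circ f)^*\varphi$ on $X$. Both functions are upper-semicontinuous and agree on the Zariski-open dense set where $f$, $g$ and $g\circ f$ are all morphisms; since the canonical upper-semicontinuous extension from part 2 of Proposition \ref{PropositionUpperSemicontinuousExtension} is the \emph{smallest} upper-semicontinuous extension of this common continuous function, $f^*(g^*\varphi)$ necessarily dominates $(g\circ f)^*\varphi$. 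Equality in the holomorphic case (with the additional bimeromorphicity of $g$ for part 3) is immediate because then all upper-semicontinuous envelopes collapse to the ordinary identity $(g\circ f)_*\varphi=g_*(f_*\varphi)$ for continuous $\varphi$ on finite covers. Part 3 is symmetric, reducing to $g_*(f_*\varphi)\geq (g\circ f)_*\varphi$ for continuous $\varphi$ on $X$.

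Part 6 is the main technical step. The direction $f_*(\mu)\geq \sup_{\chi\in M^+(X),\chi\leq\mu}f_*(\chi)$ follows from monotonicity of $f_*$ in its first argument. For the reverse, fix $\varphi\in C^0(Y)$, set $g:=f^*\varphi\in BUS(X)$, and by Baire's theorem choose $\phi_n\in C^0(X)$ with $\phi_n\downarrow g$ and $\mu(\phi_n)\downarrow f_*(\mu)(\varphi)$. Applying Hahn--Banach to the sublinear $\mu$ on $C^0(X)$, extending the linear functional $t\phi_n\mapsto t\mu(\phi_n)$ from $\mathbb{R}\phi_n$, produces a linear $\chi_n\leq \mu$ on $C^0(X)$ with $\chi_n(\phi_n)=\mu(\phi_n)$; positivity of $\chi_n$ follows since $\psi\geq 0$ implies $\chi_n(-\psi)\leq \mu(-\psi)\leq \mu(0)=0$. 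By part 1 of Theorem \ref{TheoremSubmeasureBasicProperty} a weak cluster point $\chi\in M^+(X)$ still satisfies $\chi\leq \mu$. Using $\phi_n\leq \phi_m$ for $n\geq m$ and positivity of $\chi_n$, one gets $\chi_n(\phi_m)\geq \chi_n(\phi_n)=\mu(\phi_n)\geq f_*(\mu)(\varphi)$, so in the limit $\chi(\phi_m)\geq f_*(\mu)(\varphi)$ for each $m$. Lebesgue's monotone convergence for the positive measure $\chi$ then yields $\chi(g)\geq f_*(\mu)(\varphi)$, while $\chi\leq\mu$ forces $\chi(g)\leq\inf_{\phi\geq g}\mu(\phi)=f_*(\mu)(\varphi)$; equality follows, and $f_*(\chi)(\varphi)=\chi(g)=f_*(\mu)(\varphi)$.

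Part 7 follows formally from parts 5 and 6: for measures $\chi_i\leq\mu_i$ ($i=1,2$), $\chi_1+\chi_2\leq\mu_1+\mu_2$ and $f_*(\chi_1+\chi_2)=f_*(\chi_1)+f_*(\chi_2)$ by linearity of pushforward on measures, hence $f_*(\mu_1)+f_*(\mu_2)=\sup_{\chi_1,\chi_2}[f_*(\chi_1)+f_*(\chi_2)]\leq f_*(\mu_1+\mu_2)$ via part 6. The main obstacle will be part 6: ensuring that the Hahn--Banach measures $\chi_n$, which saturate $\mu$ only on the decreasing approximants $\phi_n$, have a weak cluster point $\chi$ whose Lebesgue integral against the upper-semicontinuous function $g$ still realises the value $f_*(\mu)(\varphi)$ rather than a strictly smaller number. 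Once this coupling of Hahn--Banach with monotone approximation is handled, the sub-additivity in part 7 and the composition inequalities in parts 2 and 3 follow as essentially formal consequences.
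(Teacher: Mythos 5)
Your proposal is correct. Parts 1), 2), 4), 5) follow the paper's own arguments essentially verbatim (unfold the defining infima, compare the competing sets of continuous majorants on a dense Zariski-open set, use that $E$ produces the \emph{smallest} upper-semicontinuous extension), and part 7) is equivalent to the paper's superadditivity-of-infima computation once 6) is in hand. The genuine divergence is in part 6), which is also where the real work is. The paper reduces, via Hironaka resolution and the composition inequality of part 3), to a single blowup $\pi:Z\to X$ at a smooth centre, proves $\pi_*\pi^*(\mu)=\mu$, and shows that every measure $\chi'\leq\pi^*(\mu)$ is dominated by $\pi^*(\pi_*(\chi'))$ with $\pi_*(\chi')\leq\mu$. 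You instead prove the purely measure-theoretic identity $E(\mu)(g)=\sup_{\chi\in M^+(X),\,\chi\leq\mu}\chi(g)$ for an arbitrary bounded upper-semicontinuous $g$: take $\phi_n\downarrow g=f^*\varphi$ continuous (Baire), note that $\mu(\phi_n)\downarrow E(\mu)(g)$ automatically (a Dini argument applied to $\max\{\phi_n-\phi,0\}$ for each competitor $\phi\geq g$ — worth stating, since the infimum is over all majorants, not just your sequence), Hahn--Banach off the line $\mathbb{R}\phi_n$ a linear functional dominated by $\mu$ and saturating it at $\phi_n$, verify positivity and uniform mass bounds, pass to a weak-$*$ cluster point $\chi$, and close with monotone convergence; the coupling $\chi_n(\phi_m)\geq\mu(\phi_n)$ for $n\geq m$ is exactly the right device. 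This argument is correct, uses no complex geometry, and is strictly more general than the paper's (it applies whenever $f_*(\mu)(\varphi)$ is defined as $E(\mu)$ of a bounded u.s.c.\ function); what the paper's longer route buys in exchange is the explicit blowup description of $\pi^*(\mu)$ that it reuses in Theorem \ref{TheoremPushforwardBlowup}. One caution on part 3): the justification that "all upper-semicontinuous envelopes collapse" in the equality case is not accurate — for a holomorphic, non-bimeromorphic map (e.g.\ a blowdown) the pushforward $f_*\varphi$ of a continuous function is genuinely only upper-semicontinuous — and the paper's equality proof really does route through the identity $g_*(g^*\psi_0)=\psi_0$ supplied by the bimeromorphicity of $g$; you invoke the correct hypothesis, but that step is not immediate and should be written out.
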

\begin{proof}[Proof of Theorem \ref{TheoremSubmeasurePushforwardMeromorphic}]

1) We will give the proof only for the pullback operator in (\ref{EquationSubmeasurePullbackMeromorphic}), since the proof for the pushforward is similar. Let $\nu \in SM^+(Y)$, we will show that $f^*(\nu )\in SM^+(X)$. Let $\varphi ,\varphi _1,\varphi _2\in C^0(X)$ and $0\leq \lambda \in \mathbb{R}$. 

First, we show that $f^*(\nu )(\pm 1)=\deg (f)\nu (\pm 1)$. In fact, it follows from the definition that $(\pi _{W,f})_*\pi _{X,f}^*(\pm 1)=\pm \deg (f)$ on $W$, and hence $$ (\pi _{Y,f})_*\pi _{X,f}^*(\pm 1)=E((\pi _{W,f})_*\pi _{X,f}^*(\pm 1))=\pm \deg (f)$$
on X as well. Then we have
\begin{eqnarray*}
f^*(\nu )(\pm 1)=\inf _{\psi \in C^{0}(Y,\geq \pm \deg (f))}\nu (\psi )=\nu (\pm \deg (f))=\deg (f)\nu (\pm 1). 
\end{eqnarray*}
In the first equality we used that $\nu $ is positive, and in the second equality we used that $\nu$ is sub-linear. 

Second, we show the positivity of $f^*(\mu )$. If $\varphi _1\geq \varphi _2$, then it can see from the definition that $(\pi _{Y,f})_*\pi _{X,f}^*( \varphi _1)\geq (\pi _{Y,f})_*\pi _{X,f}^*( \varphi _2)$. Therefore $C^0(Y,\geq (\pi _{Y,f})_*\pi _{X,f}^*( \varphi _1))\subset C^0(Y,\geq (\pi _{Y,f})_*\pi _{X,f}^*( \varphi _2))$, and hence it follows from definition that $f^*(\nu )(\varphi _1)\geq f^*(\nu )(\varphi _2)$. 

Next, we show that $f^*(\nu )$ is bounded and moreover $||f^*(\nu )||=\deg (f)||\nu ||$. By positivity of $f^*(\nu )$, we have $f^*(\nu )(-||\varphi ||_{L^{\infty}})\leq f^*(\nu )\leq f^*(\nu )(||\varphi ||_{L^{\infty}})$. Hence $f^*(\nu )$ is bounded, and we conclude by Theorem \ref{TheoremSubmeasureBasicProperty}. 

Finally, we show the sub-linearity. The equality $f^*(\nu )(\lambda \varphi )=\lambda f^*(\nu )(\varphi )$, for $\lambda \geq 0$, follows from the fact that $(\pi _{Y,f})_*\pi _{X,f}^*(\lambda \varphi )=\lambda (\pi _{Y,f})_*\pi _{X,f}^*(\varphi )$ and properties of infimum. We now prove that $f^*(\nu )(\varphi _1+\varphi _2)\leq f^*(\nu )(\varphi _1)+f^*(\nu )(\varphi _2)$. In fact, from Proposition \ref{PropositionUpperSemicontinuousExtension} we have $$(\pi _{Y,f})_*\pi _{X,f}^*(\varphi _1+\varphi _2)\leq (\pi _{Y,f})_*\pi _{X,f}^*( \varphi _1)+(\pi _{Y,f})_*\pi _{X,f}^*( \varphi _2),$$
and hence if $\psi _1\in C^0(Y,\geq (\pi _{Y,f})_*\pi _{X,f}^*( \varphi _1))$ and $\psi _2\in C^0(Y, \geq (\pi _{Y,f})_*\pi _{X,f}^*( \varphi _2))$ then $\psi _1+\psi _2\in C^0(Y,\geq (\pi _{Y,f})_*\pi _{X,f}^*(\varphi _1+\varphi _2))$. Hence, by definition 
\begin{eqnarray*}
f^*(\nu )(\varphi _1+\varphi _2)\leq \nu (\psi _1+\psi _2)\leq \nu (\psi _1) + \nu (\psi _2). 
\end{eqnarray*}
In the second inequality we used the sub-linearity of $\nu$. If we choose $\psi _1$ and $\psi _2$ so that $\nu (\psi _1)$ is close to $f^*(\nu )(\varphi _1)$ and $\nu (\psi _2)$ is close to $f^*(\nu )(\varphi _2)$, then we see that $f^*(\nu )(\varphi _1+\varphi _2)\leq f^*(\nu )(\varphi _1)+f^*(\nu )(\varphi _2)$ as wanted. 

 2) By definition, we have
 \begin{eqnarray*}
 (g\circ f)_*(\mu )(\varphi )=\inf _{\psi \in C^0(X,\geq (g\circ f)^*(\varphi ))}\mu (\psi ). 
 \end{eqnarray*}
 Here we recall that $(g\circ f)^*(\varphi )$ is the upper-semicontinuos pullback of $\varphi $ by $g\circ f$. 
 
 On the other hand,
 \begin{eqnarray*}
g_*f_*(\mu )(\varphi )=\inf _{\psi _1\in C^0(Y,\geq g^*(\varphi ))}f_*(\mu )(\psi _1)=\inf _{\psi _1\in C^0(Y,\geq g^*(\varphi ))}\inf _{\psi _2\in C^0(X,\geq f^*(\psi _1))} \mu (\psi _2). 
 \end{eqnarray*}

 Then, since there is a dense Zariski open set $U\subset X$ so that all maps $f:U\rightarrow f(U)$ and $g:f(U)\rightarrow (g\circ f)(U)$ are all covering maps with finite fibres, it follows by the proof of part 2) of Proposition \ref{PropositionUpperSemicontinuousExtension} that whenever $\psi _1\in C^0(Y,\geq g^*(\varphi ))$  and $\psi _2\in C^0(Y,\geq g^*(\psi _1 ))$, then $\psi _2\in C^0(X, \geq (g\circ f)^*(\varphi ))$. From this, we get $g_*f_*(\varphi )(\nu )\geq (g\circ f)_*(\mu )(\varphi )$. 
 
When $f$ and $g$ are both holomorphic and $\varphi $ is continuous, then $g^*(\varphi )$, $f^*(g^*(\varphi ))$ and $(g\circ f)^*(\varphi )$ are all continuous functions. Then using the positivity of $\mu$, we can easily see that $$(g\circ f)_*(\mu )(\varphi )=\mu (f^*g^*(\varphi ))=f_*(\mu )(g^*(\varphi ))=g_*f_*(\mu )(\varphi ).$$  

3) In fact, the same proof as in 2) implies that $f^*g^*(\mu )\geq (g\circ f)^*(\mu )$. Now we prove the converse, under the assumption that both $f$ and $g$ are holomorphic, and $g$ is bimeromorphic. By definition, for all $\varphi \in C^0(X_2)$ 
\begin{eqnarray*}
f^*g^*(\mu )(\varphi )=\inf _{\psi \in C^0(X_1,\geq f_*(\varphi ))}g^*(\mu )(\psi ). 
\end{eqnarray*}

Choose $\psi _0\in C^0(X,\geq (g\circ f)_*(\varphi ))$ so that $\mu (\psi _0)$ is close to $(g\circ f)^*(\mu )(\varphi )$. Then, since  on a Zariski dense open set of $X_1$ where $f_*(\varphi )$ is continuous we have $g^*((g\circ f)_*(\varphi ))=f_*(\varphi )$, and $g^*(\psi _0)$ is continuous, it follows that $g^*(\psi _0)\geq f_*(\varphi )$. Therefore, we have
\begin{eqnarray*}
f^*g^*(\mu )(\varphi )\leq g^*(\mu )(g^*(\psi _0)). 
\end{eqnarray*}
Note that since $g$ is a bimeromorphic holomorphism and $\psi _0$ is continuous, the upper-semicontinuous pushforward $g_*(g^*(\psi _0))$ is equal to $\psi _0$. Hence, from definition and the positivity of $\mu$ we have
\begin{eqnarray*}
g^*(\mu )(g^*(\psi _0))=\mu (\psi _0). 
\end{eqnarray*}
Combining the above inequalities, we get that $f^*g^*(\mu )(\varphi )\leq (g\circ f)^*(\mu )(\varphi )$. Hence, we get the desired conclusion $f^*g^*(\mu )(\varphi )= (g\circ f)^*(\mu )(\varphi )$.

4)  It is enough to show the following: for all $\varphi \in C^0(Y)$ then 
\begin{eqnarray*}
\limsup _{n\rightarrow \infty}\inf _{\psi \in C^0(X,\geq f^*(\varphi ))}\mu _n(\psi )\leq \inf _{\psi \in C^0(X,\geq f^*(\varphi ))}\mu (\psi ). 
\end{eqnarray*}
If we choose $\psi _0\in  C^0(X,\geq f^*(\varphi ))$ so that $\mu (\psi _0)$ is close to $f_*(\mu )(\varphi )$, then from $\mu _n(\psi _0)\rightarrow \mu (\psi _0)$ we obtain the conclusion. 

If $f$ is holomorphic, then $f^*(\varphi )$ is itself a continuous function. Then it is easy to see  that $\lim _{n\rightarrow\infty}f_*(\mu _n)(\varphi )=f_*(\mu )(\varphi )$. 

5)  The upper-semicontinuous pullback $f^*(\varphi )$ of a function $\varphi \in C^0(Y)$ is continuous on the open set $U=X\backslash I(f)$. Therefore, by choosing a small open neighborhood $U_1$ of $I(f)$ and a partition of unity subordinate to $U$ and $U_1$, it is easy to find for any $U_2\subset\subset U$ a $\psi \in C^0(X,\geq f^*(\varphi ))$ so that $\psi |_{U_2}=f^*(\varphi )|_{U_2}$.  From this and the assumption that $\mu$ has no mass on $I(f)$,  the conclusion follows. 

6) By Hironaka's resolution of singularities, we can assume  without loss of generality that $X$ is smooth, and can find a smooth compact complex manifold $Z$, together with two surjective holomorphic maps $\pi :Z\rightarrow X$ and $h:Z\rightarrow Y$, so that $\pi$ is a finite composition of blowups at smooth centres and  $f=h\circ \pi ^{-1}$. Then we have for any positive strong submeasure $\mu$ that $f_*(\mu )=h_*\pi ^*(\mu )$. It is easy to check that the conclusion holds for $h$, and hence to prove the result it suffices to prove that $\pi ^*(\mu )=\sup _{\chi \in \mathcal{G}(\mu )}\pi ^*(\chi )$. By part 3), we only need to prove the conclusion in the case $\pi :Z\rightarrow X$ is the blowup at a smooth centre. 

Since $\pi ^*(\mu )\geq \pi ^*(\chi )$ for all $\chi \in \mathcal{G}(\mu )$, it follows that $\pi ^*(\mu )\geq \sup _{\chi \in \mathcal{G}(\mu )}\pi ^*(\chi )$. Now we will prove the reverse inequality. To this end, it suffices to show that for any measure $\chi '\leq \pi ^*(\mu )$, there is a measure $\chi \leq \mu$ so that $\chi '\leq \pi ^*(\chi )$. 

We first show that $\pi _*\pi ^*(\mu )=\mu $. In fact, if $\varphi \in C^0(X)$ then $\pi ^*(\varphi )\in C^0(Z)$ and $\varphi =\pi _*\pi ^*(\varphi )$. Hence, by definition 
\begin{eqnarray*}
\pi _*\pi ^*(\mu )(\varphi )=\pi ^*(\mu )(\pi ^*(\varphi ))=\mu (\pi _*\pi ^*(\varphi ))=\mu (\varphi ). 
\end{eqnarray*} 
Hence $\pi _*\pi ^*(\mu )=\mu $ as wanted. 

Now if $\chi '$ is any measure on $Z$, then $\chi =\pi _*(\chi ')$ is a measure on $X$. If moreover, $\chi '\leq \pi ^*(\mu )$, then
\begin{eqnarray*}
\chi =\pi _*(\chi ')\leq \pi _*\pi ^*(\mu )=\mu . 
\end{eqnarray*}
To conclude the proof, we will show that $\pi ^*(\chi )\geq \chi '$. To this end, let $\varphi \in C^0(Z)$, we will show that $\pi ^*(\chi )(\varphi )\geq \chi '(\varphi )$. By definition, the value of the positive strong submeasure $\pi ^*(\chi )$ at $\varphi$ is defined as: $\pi ^*(\chi )(\varphi )=\inf _{\psi \in C^0(X,\geq \pi _*(\varphi ))}\chi (\psi )$, and since $\chi =\pi _*(\chi ')$ the RHS is equal to $\inf _{\psi \in C^0(X,\geq \pi _*(\varphi ))}\chi '(\pi ^*(\psi ))\geq \chi '(\varphi )$. The latter follows from the fact that $\chi '$ is a positive strong submeasure and that for all $\psi \in C^0(X,\geq \pi _*(\varphi ))$ we have $\pi ^*(\psi )\geq \varphi $.   

7) Let $\varphi$ be a continuous function on $X$. We then have by part 6, using $\mathcal{G}(\mu _1)+\mathcal{G}(\mu _2)\subset \mathcal{G}(\mu _1+\mu _2)$, that
\begin{eqnarray*}
f_*(\mu _1+\mu _2)(\varphi )&=&\sup _{\nu \in \mathcal{G}(\mu _1+\mu _2)}\inf _{\psi \in C^0(\geq f^*(\varphi ))}\nu (\psi )\\
&\geq&\sup _{\nu _1\in \mathcal{G}(\mu _1),\nu _2\in \mathcal{G}(\mu _2)}\inf _{\psi \in C^0(\geq f^*(\varphi ))}(\nu _1+\nu _2) (\psi )\\
&\geq&\sup _{\nu _1\in \mathcal{G}(\mu _1),\nu _2\in \mathcal{G}(\mu _2)}[\inf _{\psi \in C^0(\geq f^*(\varphi ))}\nu _1 (\psi )+\inf _{\psi \in C^0(\geq f^*(\varphi ))}\nu _2 (\psi ) ]\\
&=&f_*(\mu _1)(\varphi )+f_*(\mu _2)(\varphi ).
\end{eqnarray*}
\end{proof}

{\bf Example 2.} Let $J:\mathbb{P}^2\dashrightarrow \mathbb{P}^2$ be the standard Cremona map given by $J[x_0:x_1:x_2]=[1/x_0:1/x_1:1/x_2]$. It is a birational map and is an involution: $J^2=$ the identity map. Let $e_0=[1:0:0]$, $e_1=[0:1:0]$ and $e_2=[0:0:1]$, and $\Sigma _i=\{x_i=0\}$ ($i=0,1,2$). Let $\pi :X\rightarrow \mathbb{P}^2$ be the blowup of $\mathbb{P}^2$ at $e_0,e_1$ and $e_2$, and let $E_0, E_1$ and $E_2$ be the corresponding exceptional divisors. Let $h=f\circ \pi :X\rightarrow \mathbb{P}^2$, then $h$ is a holomorphic map. Moreover, $\pi ^{-1}(e_0)=E_0$ and $h(E_0)=\Sigma _0$. More precisely, we have $h^{-1}(\Sigma _0\backslash \{e_1,e_2\})\subset E_0$. From this, we can compute, as in Example 1 in the introduction and proof of part 2) of Proposition \ref{PropositionUpperSemicontinuousExtension}, that for all $\varphi \in C^0(X)$ and for $\delta _{e_0}$ the Dirac measure at $e_0$
\begin{eqnarray*}
J_*(\delta _{e_0})(\varphi )=\sup _{\Sigma _0}\varphi . 
\end{eqnarray*}
It follows that $J_*(\delta _{e_0})\geq \max \{\delta _{e_1}, \delta _{e_2}\}$, where $\delta _{e_1}$ is the Dirac measure at $e_1$ and $\delta _{e_2}$ is the Dirac measure at $e_2$. Therefore, by the positivity of $J_*$ we obtain: 
\begin{eqnarray*}
J_*J_*(\delta _0 )(\varphi )\geq J_*(\max \{\delta _{e_1}, \delta _{e_2}\})(\varphi )\geq \max \{J_*(\delta _{e_1}(\varphi )), J_*(\delta _{e_2}(\varphi ))\}=\max \{\sup _{\Sigma _1}\varphi , \sup _{\Sigma _2}\varphi \}. 
\end{eqnarray*}
On the other hand, $J\circ J=$ the identity map, and hence $(J\circ J)_*(\delta _{e_0})=\delta _{e_0}$. Hence the inequality in part 2) of Theorem \ref{TheoremSubmeasurePushforwardMeromorphic} is strict in this case. Since $J=J^{-1}$, this example also shows that the inequality in part 3) of Theorem \ref{TheoremSubmeasurePushforwardMeromorphic} is strict in general. 

This example also shows that the inequality in part 4) of Theorem \ref{TheoremSubmeasurePushforwardMeromorphic} is strict in general. In fact, let $\{p_n\}\subset X\backslash I(f)$ be a sequence converging to a point $p=e_0$ and $\{J(p_n)\}$ converges to a point $q\in \Sigma _0$. Let $\mu _n=\max \{\delta _{p_1},\ldots ,\delta _{p_n}\}$. It can be checked easily that $\mu _n$ is an increasing sequence of positive strong submeasures, with $J_*(\mu _n)=\max \{\delta _{J(p_1)},\ldots ,\delta _{J(p_n)}\}$ for all $n$. Then the weak convergence limit $\mu =\lim _{n\rightarrow\infty}\mu _n=\sup _n\delta _{p_n}$ exists. In particular, $\mu \geq \delta _{e_0}$, and hence from the above calculation we find $J_*(\mu )\geq \sup _{x\in \Sigma _0}\delta _x$. On the other hand, $\nu =\lim _{n\rightarrow \infty}J_*(\mu _n)=\sup _{n}\delta _{J(p_n)}$. It is clear that if $x\in \Sigma _0\backslash \{q\}$, then $\nu$ cannot be compared with $\delta _x$. Therefore, we have the strict inequality $J_*(\mu )>\nu$ in this case.  

If we choose a sequence of points $\{p_n\}_{n=1,2,\ldots }\subset X\backslash \{e_0,e_1,e_2\}$ converging to $e_0$ and such that $q_n=J(p_n)$ converges to a point $q_0\in \Sigma _0$, then it can be seen that for $\mu =\sup _n\delta _{p_n}$ we have $J_*(\mu )>\sup _nJ_*(\delta _{p_n})$. Hence part 6) of Theorem \ref{TheoremSubmeasurePushforwardMeromorphic} does not hold in general if we replace $\mathcal{G}(\mu )$ by the smaller set $\mathcal{G}=\{\delta _{p_n}\}_n$. 
  
We end this section describing in detail the pushforward map on positive strong submeasures. The next result about a good choice of $\psi \in C^0(X,\geq \varphi )$ for some special bounded upper-semicontinuous functions will be needed for that purpose.
\begin{lemma} Let $X$ be a compact metric space, $A\subset X$ a closed set and $U=X\backslash A$. Let $\varphi$ be a bounded upper-semicontinuous function on $X$ so that $\chi =\varphi |_U$ is continuous on $U$ and $\gamma =\varphi |_A$ is continuous on $A$. For any $U'\subset\subset U$ an open set and $\epsilon >0$,  there is a function $\psi \in C^0(X,\geq \varphi )$ so that: 

i) $\psi |_{U'}=\chi $; ii) $\sup _A|\psi |_A-\gamma |\leq \epsilon$; and iii) $\sup _X|\psi |\leq \sup _X|\varphi |+\epsilon$. 
\label{LemmaGoodContinuousFunctionChoice}\end{lemma}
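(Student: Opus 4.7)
The plan is to construct $\psi$ by gluing a continuous majorant of $\varphi$ on all of $X$ with the restriction $\chi$ near $\overline{U'}$, using a Urysohn cutoff. The three main ingredients are Baire's theorem (already cited in the introduction) for the majorant, Dini's theorem on the compact set $A$ for the uniform control in (ii), and Urysohn's lemma for the cutoff.

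First, Baire gives a sequence $\varphi_n\in C^0(X)$ decreasing pointwise to $\varphi$. Setting $M:=\sup_X|\varphi|$, I would replace $\varphi_n$ by the truncation $\psi_n:=\min(\varphi_n,M+\epsilon)$: this is still continuous, still monotone decreasing with pointwise limit $\varphi$, still satisfies $\psi_n\geq\varphi$, and in addition $|\psi_n|\leq M+\epsilon$ because $\psi_n\geq\varphi\geq-M$. Since $\gamma=\varphi|_A$ is continuous on the compact set $A$, Dini's theorem gives $\psi_n\to\gamma$ uniformly on $A$, so I can fix $n_0$ with $\sup_A|\psi_{n_0}-\gamma|\leq\epsilon$. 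Next, since $\overline{U'}$ is compact and contained in the open set $U$, I choose an open $V$ with $\overline{U'}\subset V\subset\overline{V}\subset U$ and a Urysohn function $\rho\in C^0(X,[0,1])$ with $\rho\equiv 1$ on $\overline{U'}$ and $\rho\equiv 0$ on $X\setminus V$. I then define
\[\psi(x):=\rho(x)\chi(x)+(1-\rho(x))\psi_{n_0}(x),\]
where $\rho\cdot\chi$ is extended by zero on $X\setminus\overline{V}$; since $\chi$ is bounded on $U\supset\overline{V}$ and $\rho\equiv 0$ off $V$ (with $A$ lying outside $\overline{V}$), this extension is continuous across $\partial V$ and across $A$, so $\psi\in C^0(X)$.

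The four claims then drop out. Property (i) follows from $\rho\equiv 1$ on $\overline{U'}$. For $\psi\geq\varphi$: on $\overline{U'}$, $\psi=\chi=\varphi$; on $X\setminus V$, $\psi=\psi_{n_0}\geq\varphi$; on $V\setminus\overline{U'}$, since $V\subset U$ gives $\chi=\varphi$ and $\psi_{n_0}\geq\varphi$, the convex combination $\psi=\rho\varphi+(1-\rho)\psi_{n_0}$ is $\geq\varphi$. Since $A\cap\overline{V}=\emptyset$, $\rho\equiv 0$ on $A$ and hence $\psi|_A=\psi_{n_0}|_A$, so (ii) follows from the choice of $n_0$. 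For (iii), on $V$ the function $\psi$ is a convex combination of values in $[-M,M]$ (from $\chi=\varphi$) and $[-M,M+\epsilon]$ (from $\psi_{n_0}$), while on $X\setminus V$ it equals $\psi_{n_0}$; thus $\sup_X|\psi|\leq M+\epsilon=\sup_X|\varphi|+\epsilon$. The one genuine subtlety I expect is enforcing (iii) simultaneously with the global majorant property and the equality $\psi=\chi$ on $U'$: without the truncation step the continuous majorant can have arbitrarily large sup-norm, and without the strict inclusion $\overline{U'}\subset\subset U$ one could not fit a Urysohn cutoff supported inside $U$, which is exactly what allows $\rho\chi$ to extend continuously by zero across $A$.
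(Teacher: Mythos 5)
Your proof is correct, and it takes a genuinely different route from the paper's. The paper works locally near $A$: using upper-semicontinuity it covers $A$ by finitely many small balls on which $\chi$ does not exceed $\gamma$ by more than $\epsilon_1$ and on which $\gamma$ has small oscillation, extends $\gamma$ continuously to each ball, and glues these local extensions (shifted up by $4\epsilon_1$ to guarantee domination of $\varphi$) with $\chi$ via a partition of unity subordinate to the cover together with a set $U''$ squeezed between $U'$ and $U$. You instead work globally: Baire's decreasing sequence of continuous majorants, truncated at $\sup_X|\varphi|+\epsilon$ to control the sup-norm, then Dini's theorem on the compact set $A$ (where the pointwise limit $\gamma$ is continuous by hypothesis) to get the uniform estimate in (ii) at a single index $n_0$, and finally one Urysohn cutoff to splice in $\chi$ over $U'$. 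All the verifications check out: the extension of $\rho\chi$ by zero is continuous because $\{U,\,X\setminus\overline V\}$ is an open cover on whose overlap both definitions vanish; the inequality $\psi\geq\varphi$ holds on each of the three pieces $\overline{U'}$, $V\setminus\overline{U'}$, $X\setminus V$; and $\rho\equiv 0$ on $A$ reduces (ii) to the Dini estimate. Your argument buys a cleaner proof with no oscillation bookkeeping and no local Tietze extensions, at the cost of invoking Dini; the paper's construction is more hands-on and produces the value of $\psi$ on $A$ explicitly from $\gamma$ rather than from a majorant sequence. Both arguments use the hypothesis $U'\subset\subset U$ in the same essential way, to fit a cutoff whose support stays inside $U$.
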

\begin{proof} Let $\epsilon _1>0$ be a small number to be determined later. Since $\varphi $ is upper-semicontinuous, for each $x\in A$, there is $r_x>0$, which we choose so small that $\overline{U'}\cap \overline{B(x,r_x)}=\emptyset$, so that 
\begin{eqnarray*}
\sup _{y\in U\cap \overline{B(x,r_x)}}\chi (y)\leq \gamma (x)+\epsilon _1.
\end{eqnarray*} 
Since $\gamma$ is continuous on $A$, by shrinking $r_x$ if necessary, we can assume that 
\begin{eqnarray*}
\sup _{x'\in A\cap \overline{B(x,r_x)}}|\gamma (x')-\gamma (x)|\leq \epsilon _1. 
\end{eqnarray*} 
Hence we obtain 
\begin{eqnarray*}
\sup _{y\in U\cap \overline{B(x,r_x)}}\chi (y)\leq \inf _{x'\in A\cap \overline{B(x,r_x)}}\gamma (x')+2\epsilon _1. 
\end{eqnarray*}
The function $\gamma |_{A\cap \overline{B(x,r_x)}}$ can be extended to a continuous function $\gamma _x$ on $\overline{B(x,r_x)}$. We can assume, by shrinking $r_x$ for example, that 
\begin{eqnarray*}
\sup _{x',x"\in \overline{B(x,r_x)}}|\gamma _x(x')-\gamma _x(x")|\leq \epsilon _1. 
\end{eqnarray*} 
Now, since $A$ is compact, we can find a finite number of such balls, say $B(x_1,r_1),\ldots ,B(x_m,r_m)$, which cover $A$. We choose $U"$ another open subset of $X$ so that $U'\subset\subset  U"\subset\subset U$ and so that $U",B(x_1,r_1),\ldots ,B(x_m,r_m)$ is a finite open covering of $X$. Let $\tau ,\tau _1,\ldots ,\tau _m$ be a partition of unity subordinate to this open covering. Then the function $$\psi (x)=\tau (x)\gamma (x)+\sum _{i=1}^m\tau _i(x)[\gamma _{x_i}(x)+4\epsilon _1],$$
with $4\epsilon _1<\epsilon$, satisfies the conclusion of the lemma.   
\end{proof}

Given $f:X\dashrightarrow Y$ a dominant meromorphic map between compact complex varieties and $\mu $ a positive strong submeasure on $X$. Without loss of generality, we can assume that $X$ is smooth, by using the above results. By part 6) of Theorem \ref{TheoremSubmeasurePushforwardMeromorphic} and its proof, to describe $f_*(\mu )$ it suffices to describe $\pi ^*(\mu )$ where $\pi :Z\rightarrow X$ is a blowup at a smooth centre and $\mu$ is a measure. The following result addresses this question. 

\begin{theorem}
Let $\pi :Z\rightarrow X$ be the blowup of $X$ at an irreducible smooth subvariety $A\subset X$. Let $\varphi \in C^0(Z)$.  Let $\mu$ be a positive measure on $X$, and decompose $\mu =\mu _1+\mu _2$ where $\mu _1$ has no mass on $A$ and $\mu _2$ has support on $A$. Then $\pi ^*(\mu _1)$ is a positive measure on $Z$, $\pi _*(\varphi )|_A$ is continuous, and we have
\begin{eqnarray*}
\pi ^*(\mu )(\varphi )=\pi ^*(\mu _1)(\varphi )+\mu _2(\pi _*(\varphi )|_A).
\end{eqnarray*}
Moreover, an explicit choice of the collection $\mathcal{G}$ in part 2) of Theorem \ref{TheoremHahnBanach} for $\pi ^*(\mu )$ will be explicitly described in the proof. 
\label{TheoremPushforwardBlowup}\end{theorem}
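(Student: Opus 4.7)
The plan is to decompose the infimum defining $\pi^*(\mu)(\varphi)$ via $\mu=\mu_1+\mu_2$ and to build, with the aid of Lemma~\ref{LemmaGoodContinuousFunctionChoice}, continuous majorants of $\pi_*(\varphi)$ that approximate it independently on $X\setminus A$ and on $A$. The first preliminary is that $\pi_*(\varphi)|_A$ is continuous. Since $\pi$ is the blowup of $X$ along the smooth subvariety $A$, the restriction $\pi|_{\pi^{-1}(A)}:\pi^{-1}(A)\to A$ is a projective bundle; local triviality together with continuity of $\varphi$ on the compact fibers makes $x\mapsto \max_{y\in\pi^{-1}(x)}\varphi(y)=\pi_*(\varphi)|_A(x)$ continuous on $A$, while $\pi_*(\varphi)$ is automatically continuous on $X\setminus A$ because $\pi$ is an isomorphism there.

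The second preliminary is that $\pi^*(\mu_1)$ is a positive measure on $Z$. For $\varphi\in C^0(Z)$, Lemma~\ref{LemmaGoodContinuousFunctionChoice} applied with $U=X\setminus A$ furnishes, for any $U'\subset\subset U$ and $\epsilon>0$, a function $\psi\in C^0(X,\geq \pi_*(\varphi))$ with $\psi|_{U'}=\pi_*(\varphi)|_{U'}$ and $\sup_X|\psi|\leq\sup_X|\pi_*(\varphi)|+\epsilon$. Since $\mu_1$ vanishes on $A$, letting $U'$ exhaust $X\setminus A$ and applying dominated convergence yields
\begin{equation*}
\pi^*(\mu_1)(\varphi)=\int_{X\setminus A}\varphi\circ\pi^{-1}\,d\mu_1,
\end{equation*}
which is linear in $\varphi$. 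Hence $\pi^*(\mu_1)$ is the classical pushforward of $\mu_1$ under the biholomorphism $\pi^{-1}:X\setminus A\to Z\setminus\pi^{-1}(A)$, extended by zero to $\pi^{-1}(A)$.

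I would then establish the main formula by two bounds. The inequality $\geq$ is direct: any $\psi\in C^0(X,\geq \pi_*(\varphi))$ satisfies $\mu_1(\psi)\geq \pi^*(\mu_1)(\varphi)$ by the previous step, while $\mu_2(\psi)\geq \mu_2(\pi_*(\varphi)|_A)$ follows from $\psi|_A\geq \pi_*(\varphi)|_A$ together with the support condition on $\mu_2$. For the bound $\leq$, I would invoke Lemma~\ref{LemmaGoodContinuousFunctionChoice} in its full strength using the piecewise continuity of $\pi_*(\varphi)$ established above, producing $\psi\in C^0(X,\geq \pi_*(\varphi))$ with $\psi|_{U'}=\pi_*(\varphi)|_{U'}$ and $\sup_A|\psi|_A-\pi_*(\varphi)|_A|\leq\epsilon$. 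Sending $\epsilon\to 0$ and letting $U'$ exhaust $X\setminus A$ forces $\mu_1(\psi)+\mu_2(\psi)\to \pi^*(\mu_1)(\varphi)+\mu_2(\pi_*(\varphi)|_A)$.

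Finally, for the explicit collection $\mathcal{G}$ of Theorem~\ref{TheoremHahnBanach}, I propose the family of positive measures $\chi_\sigma:=\pi^*(\mu_1)+\sigma_*(\mu_2)$ on $Z$, where $\sigma:A\to\pi^{-1}(A)$ ranges over Borel sections of the projective bundle $\pi|_{\pi^{-1}(A)}$. Each $\chi_\sigma$ has mass $\mu(1)$ and satisfies $\chi_\sigma\leq \pi^*(\mu)$, and writing $\pi^*(\mu)=\sup_\sigma\chi_\sigma$ reduces to the identity
\begin{equation*}
\sup_\sigma \int_A \varphi\circ\sigma\,d\mu_2=\mu_2(\pi_*(\varphi)|_A).
\end{equation*}
I expect this last step to be the main obstacle: it requires a measurable-selection argument, for instance Kuratowski--Ryll-Nardzewski applied to the compact-valued multifunction $x\mapsto\{y\in\pi^{-1}(x):\varphi(y)\geq \pi_*(\varphi)|_A(x)-\epsilon\}$, or else one can enlarge $\mathcal{G}$ by its weak closure, justified by part~(1) of Theorem~\ref{TheoremSubmeasureBasicProperty}, and approximate $\pi_*(\varphi)|_A$ pointwise from below by maxima of $\varphi\circ\sigma_i$ over finitely many continuous local sections patched together by a partition of unity on $A$.
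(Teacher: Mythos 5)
Your proof is correct, and for the main identity it follows essentially the same route as the paper: continuity of $\pi _*(\varphi )|_A$ via the local triviality of the projective bundle $\pi ^{-1}(A)\rightarrow A$, identification of $\pi ^*(\mu _1)$ with the extension by zero of the pushforward of $\mu _1$ under $\pi ^{-1}|_{X\backslash A}$, and the two-sided estimate obtained by feeding the piecewise-continuous function $\pi _*(\varphi )$ into Lemma \ref{LemmaGoodContinuousFunctionChoice}. The one genuine divergence is the explicit collection $\mathcal{G}$. Your primary candidate is the family $\chi _\sigma =\pi ^*(\mu _1)+\sigma _*(\mu _2)$ over Borel sections $\sigma$, which does work: the multifunction $x\mapsto \{y\in \pi ^{-1}(x):\varphi (y)\geq \pi _*(\varphi )|_A(x)-\epsilon \}$ has closed graph and compact nonempty values, so Kuratowski--Ryll-Nardzewski produces the needed $\epsilon$-optimal selections, and then $\sup _\sigma \int _A\varphi \circ \sigma \,d\mu _2=\mu _2(\pi _*(\varphi )|_A)$ without any need to pass to the weak closure of $\mathcal{G}$. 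The paper instead takes $\mathcal{G}$ to consist of the measures $\chi (\varphi )=\mu _2\bigl(\sum _i\tau _i\,(\varphi \circ \gamma _i)\bigr)$ built from finite open covers of $A$, subordinate partitions of unity $\{\tau _i\}$ and \emph{continuous} local sections $\gamma _i$; uniform continuity of $\varphi$ on the compact total space then gives, for a fine enough cover, $|\varphi (\gamma _i(x))-\sup _{\pi ^{-1}(x)}\varphi |\leq \epsilon$ on each $U_i$, so the supremum is attained in the limit with no selection theorem at all. This is exactly the fallback you sketch in your last sentence. The trade-off is that your $\chi _\sigma$ are genuine selection measures (extremal in a natural sense) at the cost of invoking measurable selection, while the paper's averaged measures are softer to construct; both families consist of positive measures of mass $\mu (1)$ dominated by $\pi ^*(\mu )$, so both are valid choices of $\mathcal{G}$ in part 2) of Theorem \ref{TheoremHahnBanach}.
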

\begin{proof}
Let $B\in Z$ be the exceptional divisor of the blowup. Then $\pi :B\rightarrow A$ is a smooth holomorphic fibration, whose fibres are isomorphic to $\mathbb{P}^{r-1}$ where $r=$ the codimension of $A$. As in Example 1, it can be computed that for $x\in A$ then $\pi _*(\varphi )(x)=\sup _{y\in \pi ^{-1}(x)}\varphi $. Therefore, from what was said about the map $\pi :B\rightarrow A$, it follows that $\pi _*(\varphi )|_A$ is continuous. Then it is easy to see that the upper-semicontinuous function $\pi _*(\varphi )$ satisfies the conditions of Lemma \ref{LemmaGoodContinuousFunctionChoice}. It is easy to check that $\pi ^*(\mu _1)$ is a positive measure on $Z$.  Hence, by the conclusion of Lemma \ref{LemmaGoodContinuousFunctionChoice}, it is easy to see that 
\begin{equation}
\pi ^*(\mu )(\varphi )=\pi ^*(\mu _1)(\varphi )+\mu _2(\pi _*(\varphi )|_A).
\label{EquationDetail}\end{equation}

Now we provide  an explicit choice of the collection $\mathcal{G}$ associated to $\pi ^*(\mu )$ in part 2) of Theorem \ref{TheoremHahnBanach}. From Equation (\ref{EquationDetail}), it suffices to provide such a $\mathcal{G}$ for  $\mu _2$, since then the corresponding collection for $\mu$ will be $\pi ^*{\mu _1}+\mathcal{G}$. Therefore, in the remaining of the proof, we will assume that $\mu =\mu _2$ has support on $A$. 

Define $\psi (\varphi )=\pi _*(\varphi )|_A$, it then follows that  $\psi (\varphi )\in C^0(A)$, and 
\begin{eqnarray*}
\pi ^*(\mu )(\varphi )=\mu (\psi (\varphi )). 
\end{eqnarray*}
We now present an explicit collection $\mathcal{G}$ of positive measures on $X$ so that 
\begin{eqnarray*}
\mu (\psi (\varphi ))=\sup _{\chi \in \mathcal{G}}\chi (\varphi ). 
\end{eqnarray*}

To this end, let us consider for each finite open cover $\{U_i\}_{i\in I}$ of $A$, a partition of unity $\{\tau _i\}$ subordinate to the finite open cover $\{U_i\}$ of $A$, and local continuous sections $\gamma _i:U_i\rightarrow \pi ^{-1}(U_i)$, the following assignment on $B$: 
\begin{eqnarray*}
\chi (\{U_i\}, \{\tau _i\}, \gamma _i)(\varphi )=\mu (H(\varphi )), 
\end{eqnarray*}
 where $H(\varphi )\in C^0(A)$ is the following function 
 \begin{eqnarray*}
 H(\varphi )(x)=\sum _{i\in I}\tau _i(x)\varphi (\gamma _i(x)). 
 \end{eqnarray*}
 Since $H(\varphi )$ is linear and non-decreasing in $\varphi$, it is easy to see that $\chi $ is indeed a measure. Moreover, since $|H(x)|\leq \max _B|\varphi |$, it follows that $||\chi ||\leq ||\mu ||$.

We let $\mathcal{G}$ be the collection of such positive measures. We now claim that for all $\varphi \in C^0(Z)$
\begin{eqnarray*}
\mu (\psi (\varphi ))=\sup _{\chi \in \mathcal{G}}\chi (\varphi ). 
\end{eqnarray*}

We show first $\mu (\psi (\varphi ))\geq \chi (\varphi )$ for all $\chi \in \mathcal{G}$.  In fact, since $\gamma _i(x)\in \pi ^{-1}(x)$ for all $x\in A$, it follows by definition that 
\begin{eqnarray*}
H(\varphi )(x)\leq \sup _{\pi ^{-1}(x)}\varphi =\psi (\varphi )(x),
\end{eqnarray*}
for all $x\in A$. Hence $\mu (\psi (\varphi ))\geq \chi (\varphi )$. 
 
Now we show the converse. Let $\varphi $ be any continuous function on $Z$. Then for any $\epsilon >0$ we can always find a finite open covering $\{U_i\}_{i\in I}$ of $X$, depending on $\varphi$ and $\epsilon$, so that for all $x\in U_i$ we have
\begin{eqnarray*}
|\varphi (\gamma _i(x))-\sup _{\pi ^{-1}(x)}\varphi |\leq \epsilon. 
\end{eqnarray*} 
 It then follows that correspondingly $|H(x)-\psi (\varphi )(x)|\leq \epsilon$ for all $x\in A$. Therefore, for this choice of $\chi \in \mathcal{G}$
 \begin{eqnarray*}
 |\mu (\psi (\varphi ))-\mu (H)|\leq \epsilon ,
 \end{eqnarray*}
 and hence letting $\epsilon \rightarrow 0$  concludes the proof
\end{proof}

\section{Least-negative intersection of almost positive closed $(1,1)$ currents}\label{SectionIntersection}
In this section we define the least negative intersection of almost positive closed $(1,1)$ currents, more precisely of $p$ almost  positive closed $(1,1)$ currents and a positive closed $(k-p,k-p)$ current  on a compact K\"ahler manifold of dimension $k$. Then we prove Theorem \ref{TheoremLeastNegativeIntersection}, Proposition \ref{PropositionIntersectionProjectiveSpace} and some other results. At the end of the section we will discuss about the range of the least negative intersection $\Lambda (T,T,\ldots ,T)$. The content in this section is an improvement and extension of our previous preprint \cite{truong}. 

For background about positive closed currents and intersection theory the readers may consult the book \cite{demailly1}. Let $(X,\omega )$ be a K\"ahler manifold. We recall that an upper-semicontinuous function $u:X\rightarrow [-\infty ,\infty )$ is quasi-plurisubharmonic if there is a constant $A>0$ so that $A\omega  +dd^cu$ is a positive closed $(1,1)$ current. If $\Omega $ is any smooth closed $(1,1)$ form on $X$, the current $T=\Omega +dd^cu$ is almost positive. We then say that $u$ is a quasi-potential of $T$. For a quasi-psh function $u$, its Lelong number at $x\in X$ is defined as follows:
\begin{eqnarray*}
\nu (u,x)=\liminf _{z\rightarrow x}\frac{u(z)}{\log |z-x|}. 
\end{eqnarray*}
We then define  $\nu (T,x)=\nu (u,x)$. 

From the seminal work of Bedford and Taylor \cite{bedford-taylor, bedford-taylor2} on intersection of positive closed $(1,1)$ currents whose quasi-potentials are locally bounded, to later developments with important contributions from Demailly \cite{demailly}, Fornaess and Sibony \cite{fornaess-sibony}, Kolodziej \cite{kolodziej}, Cegrell \cite{cegrell1, cegrell2}, Guedj and Zeriahi \cite{guedj-zeriahi}, Boucksom, Eyssidieux, Guedj and Zeriahi \cite{boucksom-eyssidieux-guedj-zeriahi} and many others, the following monotone convergence is the cornerstone.  

\begin{definition} {\bf Monotone convergence.}  Let  $(X,\omega )$ be a K\"ahler manifold of dimension $k$, and $u_1,\ldots ,u_p$ be quasi-plurisubharmonic functions and $R$ a positive closed $(k-p,k-p)$. If $\{u_i^{(n)}\}_n$ $(i=1,2,3, \ldots )$ is a sequence of smooth quasi-psh functions decreasing to $u_i$, and there is a constant $B>0$ such that $dd^cu_i^{(n)}\geq -B\omega $ for all $i$ and $n$, then for any $1\leq i_1<i_2<\ldots <i_q\leq p$
\begin{eqnarray*}
\lim _{n\rightarrow\infty }dd^cu_{i_1}^{(n)}\wedge dd^cu_{i_2}^{(n)}\wedge \ldots \wedge dd^cu_{i_q}^{(n)}\wedge R
\end{eqnarray*}   
exists, and the limit does not depend on the choice of the regularisation $u_i^{(n)}$. We denote the limit by $dd^cu_{i_1}\wedge \ldots \wedge dd^cu_{i_q}\wedge R$. 
\end{definition}

Note that the existence of the regularisations $\{u_i^{(n)}\}_n$ in the above monotone convergence condition is classical in the local setting, and for a compact K\"ahler manifold the existence is proven by Demailly \cite{demailly}. We recall that a quasi-psh function $u$ has analytic singularities if locally it can be written as  
\begin{eqnarray*}
u=\gamma +c\log (|f_1|^2+\ldots +|f_m|^2),
\end{eqnarray*}
where $\gamma$ is a smooth function, $c>0$ is a positive constant, and $f_1,\ldots ,f_m$ are holomorphic functions. We will need the following theorem, see \cite{demailly}. 

\begin{theorem}
Let $(X,\omega )$ be a compact K\"ahler manifold. Let $T=\Omega +dd^cu$ be a positive closed $(1,1)$ current on $X$.

1) There are a sequence of smooth quasi-psh functions $u_n$ decreasing to $u$, and a constant $A>0$ (depending on $T$) so that $dd^cu_n\geq -A\omega $ for all $n$.   

2) There are a sequence of quasi-psh functions $u_n$ with {\bf analytic singularities} decreasing to $u$, and a sequence of positive numbers $\epsilon _n$ decreasing to $0$, so that the following 2 conditions are satisfied: i) $\Omega + dd^cu_n\geq -\epsilon _n\omega $ for all $n$; and ii) $\nu (u_n,x)$ increases to $\nu (u,x)$  uniformly with respect to $x\in X$. 
 
\label{TheoremDemaillyRegularisation}\end{theorem}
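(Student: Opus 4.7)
The plan is to prove both parts by Demailly's regularisation technique, using local convolution for Part 1 and the Bergman kernel together with the Ohsawa--Takegoshi extension theorem for Part 2.

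For Part 1, the natural idea is local convolution: if $u$ were defined on an open set of $\C^n$, convolving with a standard radial smoothing kernel $\rho_\epsilon$ would give smooth quasi-psh functions $u*\rho_\epsilon$ decreasing to $u$ as $\epsilon\downarrow 0$, with $dd^c(u*\rho_\epsilon)\geq -A_0\omega_{\rm flat}$ inherited from $u$. On a compact K\"ahler manifold one cannot convolve directly, so I would replace the convolution by an average along geodesic segments. Concretely, fix an auxiliary Hermitian connection on $TX$ and set
\begin{equation*}
u_\epsilon(z) := \int_{T_zX} u(\exp_z(\zeta))\,\rho(|\zeta|^2/\epsilon^2)\,\frac{d\lambda(\zeta)}{c_n\,\epsilon^{2n}},
\end{equation*}
where $\exp_z$ is the associated exponential map and $\rho$ is a smooth cutoff. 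The $u_\epsilon$ are smooth, and a direct curvature computation (exploiting that $\exp_z$ is holomorphic to order two at $z$, so $\exp_z^*\omega$ differs from the flat metric by $O(|\zeta|^2)$) gives $dd^c u_\epsilon \geq -(A_0 + C\epsilon^2)\omega$, with $C$ controlled by the curvature of the auxiliary metric. The monotonicity $u_\epsilon\searrow u$ fails strictly because $\exp_z$ is not holomorphic, but the defect is $O(\epsilon^2)$; after subtracting a smooth correction of that order and restricting to a sequence $\epsilon_n = 2^{-n}$, one obtains a genuinely decreasing smooth sequence and may take $A := A_0 + C$.

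For Part 2, I would fix an ample line bundle $(L,h_0)$ on $X$ whose Chern curvature form $\omega_0$ is positive, and for each $m\geq 1$ form the Hilbert space $\sH_m$ of holomorphic sections of $mL$ with norm
\begin{equation*}
\|\sigma\|_m^2 := \int_X |\sigma|_{h_0^m}^2 \, e^{-2mu} \, dV_\omega.
\end{equation*}
Fixing an orthonormal basis $\{\sigma_j^{(m)}\}_j$ of $\sH_m$, define
\begin{equation*}
u_m(z) := \frac{1}{2m}\log \sum_j |\sigma_j^{(m)}(z)|_{h_0^m}^2.
\end{equation*}
In any local trivialisation of $L$ this is $(1/2m)\log$ of a finite sum of squares of local holomorphic functions plus a smooth term, so $u_m$ has analytic singularities, and from the fixed curvature form of $h_0$ one reads off $\Omega + dd^c u_m \geq -\epsilon_m\omega$ with $\epsilon_m = O(1/m)$. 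The Ohsawa--Takegoshi extension theorem, applied to extend a constant value at a single point $z$ to a global section with controlled $L^2$ norm, yields the upper bound $u_m(z) \leq u(z) + O(\log m)/m$ uniformly in $z$. The extremal characterisation of the Bergman kernel combined with the sub-mean value property of plurisubharmonic functions gives the Lelong number comparison $\nu(u,x) - n/m \leq \nu(u_m,x) \leq \nu(u,x)$, so $\nu(u_m,x)\nearrow \nu(u,x)$ uniformly on $X$ by compactness.

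The main obstacle is twofold. In Part 1, the delicate step is justifying the curvature inequality for $u_\epsilon$ and the quasi-monotonicity of the geodesic average; both rely on arranging $\exp_z$ to be holomorphic to second order at its centre so that the non-holomorphic correction is absorbed into the $O(\epsilon^2)$ error. In Part 2, the Ohsawa--Takegoshi upper bound is the technical heart, and monotonicity of the sequence $u_m$ is not automatic: one must either replace $u_m$ by $\max(u_m, u_{m+1}, \ldots) + C_m$ with $C_m \downarrow 0$, or pass to a subsequence after an additive smooth correction, to guarantee the decreasing property. For the applications in this paper it suffices to invoke Demailly's original formulation of these statements.
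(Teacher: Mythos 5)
This theorem is not proved in the paper at all: it is quoted verbatim from Demailly's regularisation papers (the text says ``We will need the following theorem, see \cite{demailly}''), so the honest comparison is between your sketch and Demailly's actual arguments. Your outline of Part 1 is essentially the right one (Demailly's convolution along the flow of a Chern connection), and you correctly identify the delicate point: the raw average $u_\epsilon$ is not monotone in $\epsilon$, and one needs Demailly's lemma that $\epsilon\mapsto u_\epsilon+K\epsilon^2$ is nondecreasing for a suitable $K$ controlled by the curvature of $(TX,\omega)$; granting that lemma, your argument goes through and yields the fixed loss $A$.

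Part 2 as written has a genuine gap. With a fixed ample $(L,h_0)$ and the space of sections of $mL$ weighted by $e^{-2mu}$, the curvature inequality you can extract is $dd^c u_m\geq -\tfrac{1}{2}\,\Theta(L,h_0)$ (the factor $m$ in $h_0^m$ cancels against the $\tfrac{1}{2m}$ in front of the $\log$), which is a \emph{fixed} negative lower bound, not one of size $\epsilon_m=O(1/m)$; so this construction cannot produce $\Omega+dd^cu_m\geq-\epsilon_m\omega$. Nor can you repair it by folding $\Omega$ into the metric of a line bundle, since $\Omega$ is an arbitrary smooth closed real $(1,1)$-form whose class need not be integral. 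Demailly's actual proof is local--global: one applies the Ohsawa--Takegoshi extension theorem to Bergman spaces $L^2(e^{-2m(u+\varphi_j)})$ on coordinate balls $U_j$, where $\varphi_j$ is a local potential of $\Omega$, obtains the two-sided pointwise and Lelong-number estimates there, and then glues the local approximants with the regularised maximum function; it is the gluing step that produces the $O(1/m)$ loss of positivity and hence $\epsilon_m\downarrow 0$. Your Lelong-number bounds $\nu(u,x)-n/m\leq\nu(u_m,x)\leq\nu(u,x)$ and the need for a corrective modification to force monotone decrease are stated correctly, and for the purposes of this paper it is indeed legitimate to simply cite Demailly, as the author does.
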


\begin{definition} {\bf Classical wedge intersection.} Let $(X,\omega )$ be a K\"ahler manifold and  $u_1,\ldots ,u_p$ quasi-psh functions on $X$. Assume that for every open subset $U\subset X$, the restrictions $u_i|_{U}$ ($i=1,\ldots ,p$) and $R|_U$ on the K\"ahler manifold $(U,\omega _U=\omega |_U)$ satisfy the monotone convergence property.  Then for every smooth closed $(1,1)$ forms $\Omega _1, \Omega _2,\ldots ,\Omega _p$ on $X$ and any open set $U\subset X$, the wedge intersection $(\Omega _1+dd^cu_1)|_U\wedge (\Omega _2+dd^cu_2)|_U\wedge \ldots (\Omega _p+dd^cu_p)|_U\wedge R|_U$ is well-defined by a corresponding monotone convergence. We say that in this case the intersection $(\Omega _1+dd^cu_1)\wedge (\Omega _2+dd^cu_2)\wedge \ldots (\Omega _p+dd^cu_p)\wedge R$ is {\bf classically defined}. 
\end{definition}

\begin{remark}
 1) By the seminal work \cite{bedford-taylor} classical wedge intersection is satisfied when $u_1,\ldots ,u_p$ are locally bounded. It is also true when the singularities of $u_1,\ldots ,u_p$ are "small" in a certain sense, see \cite{demailly, fornaess-sibony}.    
 
 2) If the above monotone convergence holds (for smooth regularisations $u_i^{(n)}$), then it also holds when we use more generally  {\bf bounded regularisations} $u_i^{(n)}$. See Lemma \ref{LemmaGoodMonotoneConvergence} for more details. 

3) In the approach using residue currents \cite{anderson, anderson-wulcan, anderson-blocki-wulcan}, for positive closed $(1,1)$ currents of analytic singularities, it was shown that while the monotone convergence does not hold for {\bf all bounded regularisations}, it does hold for special bounded regularisations such as $u_i^{(n)}=\max \{u_i,-n\}$. We note that this special class of regularisations is used in the non-pluripolar approach \cite{bedford-taylor2, guedj-zeriahi, boucksom-eyssidieux-guedj-zeriahi}. However, both these two approaches does not preserve cohomology classes, since they give the answer $0$ to Example 3 below. 
\end{remark}
 
The following notion will be frequently used in the remaining of the paper. 
\begin{definition} {\bf Good monotone approximation.} Let $(X,\omega)$ be a K\"ahler manifold, and $u$ a quasi-psh function on $X$. A sequence $\{u_n\}_{n=1,2,\ldots }$ of quasi-psh functions on $X$ is a good monotone approximation of $u$ if it satisfies the following two properties: 

i) There exists  $A>0$ so that $dd^cu_n\geq -A\omega $ for all $n$,

and 

ii) $u_n$ decreases to $u$. 
\end{definition}
 
 \begin{lemma}
 Let $u_1,\ldots ,u_p$ be quasi-psh functions on a K\"ahler manifold $(X,\omega )$, and $R$ a positive closed $(k-p,k-p)$ current on $X$. For each $i$, let $\{u_{i}^{(n)}\}$ be a good monotone  approximation for $u_i$. Assume that for each $n$, the intersection $dd^cu_1^{(n)}\wedge \ldots \wedge dd^cu_p^{(n)}\wedge R$ is classically defined. Then we can write $dd^cu_1^{(n)}\wedge \ldots \wedge dd^cu_p^{(n)}\wedge R=\mu _n^+-\mu _n^-$, where $\mu _n^{\pm}$ are positive measures on $X$ so that for any compact set $K\subset X$:
 \begin{eqnarray*}
 \sup _n||\mu _n^{\pm}||_K<\infty .
 \end{eqnarray*}
\label{LemmaSignedMeasuresBoundedMasses}\end{lemma}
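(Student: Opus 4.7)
The strategy is algebraic plus one standard analytic estimate. Using the uniform lower bound $dd^c u_i^{(n)} \geq -A\omega$, set $S_i^{(n)} := dd^c u_i^{(n)} + A\omega$, a positive closed $(1,1)$-current. Substituting $dd^c u_i^{(n)} = S_i^{(n)} - A\omega$ and expanding the wedge product yields
\[
dd^c u_1^{(n)}\wedge\cdots\wedge dd^c u_p^{(n)}\wedge R = \sum_{I\subset\{1,\ldots,p\}} (-A)^{|I^c|}\, \Big(\bigwedge_{i\in I} S_i^{(n)}\Big)\wedge \omega^{|I^c|}\wedge R.
\]
Since the left-hand side is classically defined and $A\omega$ is smooth, multilinearity of classical wedge intersection forces each summand on the right to be a classically defined positive measure. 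The natural decomposition is then
\[
\mu_n^{\pm} := \sum_{|I^c|\ \text{even/odd}} A^{|I^c|} \Big(\bigwedge_{i\in I} S_i^{(n)}\Big)\wedge \omega^{|I^c|}\wedge R,
\]
which by construction satisfies $dd^c u_1^{(n)}\wedge\cdots\wedge dd^c u_p^{(n)}\wedge R = \mu_n^+ - \mu_n^-$.

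Next I would control the mass of each summand on a compact $K\subset X$ uniformly in $n$, by a Chern--Levine--Nirenberg type estimate applied by induction on $|I|$. For $|I|=0$ the summand is $A^p\omega^p\wedge R$, with obvious $n$-independent mass. For the inductive step, fix $i_0\in I$, put $\Theta_n := \big(\bigwedge_{i\in I\setminus\{i_0\}} S_i^{(n)}\big)\wedge \omega^{|I^c|}\wedge R$ (a positive current whose mass on a compact neighborhood of $K$ is controlled by induction), and choose a cutoff $\chi\in C_c^\infty(X)$ with $\chi\equiv 1$ on $K$ and support in a compact $K'$. Then
\[
\int_K \Big(\bigwedge_{i\in I} S_i^{(n)}\Big)\wedge \omega^{|I^c|}\wedge R \;\leq\; \int \chi\, S_{i_0}^{(n)}\wedge \Theta_n \;=\; \int \chi\, dd^c u_{i_0}^{(n)}\wedge \Theta_n + A\int \chi\,\omega\wedge \Theta_n,
\]
and an integration by parts converts the first integral into $\int u_{i_0}^{(n)}\, dd^c\chi\wedge \Theta_n$.

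The quantitative input is the uniform $L^1_{loc}$ control of the potentials: since $u_i\leq u_i^{(n)}\leq u_i^{(1)}$, both the minorant $u_i$ and the majorant $u_i^{(1)}$ being quasi-psh and hence in $L^1_{loc}(X)$, the sequence $u_i^{(n)}$ is bounded in $L^1(K'')$ uniformly in $n$ for every compact $K''\subset X$. Writing $dd^c\chi$ as a difference of two smooth $(1,1)$-forms dominated by a constant multiple of $\omega$, combining the uniform $L^1$ bound of $u_{i_0}^{(n)}$ on $\mathrm{supp}\,\chi$ with the inductive mass bound of $\Theta_n$ closes the induction and produces the required uniform bound on $\|\mu_n^{\pm}\|_K$.

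The main obstacle I anticipate is the rigorous justification of the integration by parts at the level of the classically defined intersection rather than of its smooth regularisations: this requires approximating $u_{i_0}^{(n)}$ by a decreasing sequence of smooth quasi-psh functions, performing the integration by parts on each approximation (where it is standard), and passing to the limit using the monotone convergence that underlies the classical definition of the wedge product, together with care in bounding $|u_{i_0}^{(n)}|\, |dd^c\chi|\wedge\Theta_n$ by a positive quantity to which the uniform $L^1$ estimate directly applies.
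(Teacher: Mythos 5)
Your construction of $\mu_n^{\pm}$ is exactly the paper's: for $p=1$ the paper writes $dd^cu_1^{(n)}\wedge R=(A\omega+dd^cu_1^{(n)})\wedge R-A\omega\wedge R$ and declares the case $p>1$ ``similar'', which is precisely your multinomial expansion of $\bigwedge_i(S_i^{(n)}-A\omega)\wedge R$ grouped by the parity of $|I^c|$. That part, together with the observation that each summand is a positive measure, is correct and matches the intended argument.

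The gap is in the uniform mass bound, and it sits exactly where you place the quantitative input. After integrating by parts you must control $\int u_{i_0}^{(n)}\,dd^c\chi\wedge\Theta_n$, i.e.\ bound $\int_{K'}\bigl(M-u_{i_0}^{(n)}\bigr)\,\omega\wedge\Theta_n$ with $M=\sup_{K'}u_{i_0}^{(1)}$. The measure here is $\omega\wedge\Theta_n$ (the trace measure of a varying positive current), not Lebesgue measure; the uniform $L^1_{\mathrm{loc}}$ bound you extract from $u_{i_0}\le u_{i_0}^{(n)}\le u_{i_0}^{(1)}$ is with respect to Lebesgue measure and gives no control of integrals against $\omega\wedge\Theta_n$, which may concentrate mass where $u_{i_0}$ is very negative. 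This is not merely the issue of justifying Stokes that you flag at the end: Kiselman-type examples (a psh $u$ on a ball in $\mathbb{C}^2$ whose bounded truncations $\max(u,-n)$ form a good monotone approximation with locally unbounded Monge--Amp\`ere masses) show that the desired uniform bound can actually fail on a non-compact $X$ under the stated hypotheses, so no Chern--Levine--Nirenberg induction can close without an extra assumption such as uniform local integrability of $u_{i_0}$ against the trace measures of the partial products. To be fair, the paper's own proof is equally terse at this step (it simply asserts that the bound on $||\mu_n^{+}||_K$ ``follows from the assumption that \dots is classically defined''), and in the only setting where the lemma is applied, namely $X$ compact, the bound is immediate with no analysis at all: each summand is a positive measure whose total mass is the cohomological quantity $A^{|I|}\int_X\omega^{p}\wedge R$, independent of $n$, since $\{dd^cu_i^{(n)}\}=0$. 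I would either restrict to compact $X$ and invoke Stokes, or state explicitly the integrability hypothesis your induction needs.
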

\begin{proof}
For simplicity, we prove only for the case $p=1$. The case where $p>1$ is similar. 

By definition, there is $A>0$ so that $A\omega + dd^cu_1^{(n)}$ is a positive closed current on $X$ and $\mu _n^+=(A\omega +dd^cu_1^{(n)})\wedge R$ is a positive measure. Hence $dd^cu_1^{(n)}\wedge R=\mu _n^+-\mu _n^-$ where $\mu _n^-=A\omega \wedge R$ is also a positive measure on $X$. Note that $\mu _n^-$ is independent of $n$, and hence for any compact $K\subset X$: $\sup _n||\mu _n^-||_K<\infty$. The claim for $||\mu _n^+||_K$ follows from the assumption that $(A\omega +dd^cu_1^{(n)})\wedge R$ is classically defined.  
\end{proof} 
 
 Let $(X,\omega )$ be a compact K\"ahler manifold, and $R$ a positive closed $(k-p,k-p)$ current on $X$. We denote by $\mathcal{E}(R)$ the set of all tuples $(u_1,\ldots ,u_p)$ where $u_i$'s ($i=1,\ldots ,p$) are quasi-psh functions so that the intersection $dd^cu_1\wedge \ldots \wedge dd^cu_p\wedge R$ is classically defined. Thus $\mathcal{E}(R)$ is the largest class of tuples where Bedford - Taylor monotone convergence is still valid.  

Assume that $(u_1,\ldots ,u_p)\in \mathcal{E}(R)$. Then by definition, for any smooth closed $(1,1)$ forms $\Omega _1,\ldots ,\Omega _p$ on $X$, any open set $U\subset X$ and sequences of smooth functions $\{u_i^{(n)}\}$ defined on $U$ which are good monotone approximations of $u_i|_U$, we have 
\begin{eqnarray*}
&&\lim _{n\rightarrow\infty}(\Omega _1|_U+dd^cu_1^{(n)})\wedge (\Omega _2|_U+dd^cu_2^{(n)})\wedge \ldots \wedge (\Omega _p|_U+dd^cu_p^{(n)})\wedge R|_U\\
&=& (\Omega _1|_U+dd^cu_1|_U)\wedge (\Omega _2|_U+dd^cu_2|_U)\wedge \ldots \wedge (\Omega _p|_U+dd^cu_p|_U)\wedge R|_U.
\end{eqnarray*}
 
If in the above, we choose more generally $(u_1^{(n)},\ldots ,u_p^{(n)})$ not smooth, but only in $\mathcal{E}(R)$ for each $n$, then each term in the LHS of the above equality is still defined and the same equality occurs. That is, using more general good monotone approximations does not expand the set $\mathcal{E}(R)$. 

\begin{lemma} Let $(X,\omega )$ be a {\bf compact} K\"ahler manifold and $R$ a positive closed $(k-p,k-p)$ current on $X$. Let $u_1,\ldots ,u_p$ be quasi-psh functions on $X$ and $\Omega _1,\ldots ,\Omega _p$ smooth closed $(1,1)$ forms on $X$. For each $i$, let $\{u_i^{(n)}\}_n$ be a good monotone approximation (not necessarily smooth) of $u_i$. Assume that for each $n$, the tuple $(u_1^{(n)},\ldots ,u_p^{(n)})$ is in $\mathcal{E}(R)$, and the limit (in the weak convergence of signed measures) $\mu =\lim _{n\rightarrow\infty}(\Omega _1+dd^cu_1^{(n)})\wedge \ldots \wedge (\Omega _p+dd^cu_p^{(n)})\wedge R$ exists. Then, for each $i$ there is a {\bf smooth} good monotone approximation $\{v_i^{(n)}\}_n$ of $u_i$ and so that 
\begin{eqnarray*}
\lim _{n\rightarrow\infty}(\Omega _1+dd^cv_1^{(n)})\wedge \ldots \wedge (\Omega _p+dd^cv_p^{(n)})\wedge R=\mu .
\end{eqnarray*}
  
In particular, if $(u_1,\ldots ,u_p)\in \mathcal{E}(R)$ then for all open set $U\subset X$
\begin{eqnarray*}
&&\lim _{n\rightarrow\infty}(\Omega _1|_U+dd^cu_1^{(n)|_U})\wedge \ldots \wedge (\Omega _p|_U+dd^cu_p^{(n)}|_U)\wedge R|_U\\
&=&(\Omega _1|_U+dd^cu_1|_U)\wedge \ldots \wedge (\Omega _p|_U+dd^cu_p|_U)\wedge R|_U. 
\end{eqnarray*}

\label{LemmaGoodMonotoneConvergence}\end{lemma}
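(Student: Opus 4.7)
The plan is to construct the smooth good monotone approximation $\{v_i^{(n)}\}_n$ of $u_i$ by applying Demailly's regularisation (Theorem~\ref{TheoremDemaillyRegularisation}(1)) to each $u_i^{(n)}$ separately and then extracting a diagonal subsequence with care. For each fixed $n$ and $i$, choose smooth quasi-psh functions $w_i^{(n,m)}\searrow u_i^{(n)}$ as $m\to\infty$, with $dd^c w_i^{(n,m)}\geq -A'\omega$ for a constant $A'$ independent of $n$ and $m$; this is possible because $dd^c u_i^{(n)}\geq -A\omega$ uniformly in $n$ and Demailly's construction only loses an arbitrarily small amount in the $dd^c$ bound. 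Since $(u_1^{(n)},\ldots,u_p^{(n)})\in\mathcal{E}(R)$ by hypothesis, the classical monotone convergence in $\mathcal{E}(R)$ gives, for each fixed $n$,
\[
\bigwedge_{i=1}^{p}\bigl(\Omega_i+dd^c w_i^{(n,m)}\bigr)\wedge R \ \rightharpoonup \ \bigwedge_{i=1}^{p}\bigl(\Omega_i+dd^c u_i^{(n)}\bigr)\wedge R \qquad\text{as } m\to\infty.
\]

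The key subtlety is producing a diagonal sequence that is monotone decreasing in $n$. Fix a countable dense set $\{\varphi_k\}_k\subset C^0(X)$ and choose $m(n)$ inductively so that for every $i$ we have: \textbf{(i)} $w_i^{(n,m(n))}\leq u_i^{(n)}+1/n$ on $X$; \textbf{(ii)} $w_i^{(n,m(n))}\leq w_i^{(n-1,m(n-1))}+1/\bigl(n(n-1)\bigr)$ on $X$ when $n\geq 2$; and \textbf{(iii)} $\bigl|\langle\bigwedge_i(\Omega_i+dd^c w_i^{(n,m(n))})\wedge R-\bigwedge_i(\Omega_i+dd^c u_i^{(n)})\wedge R,\varphi_k\rangle\bigr|<1/n$ for $k=1,\ldots,n$. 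Conditions (i) and (ii) are supplied by the Dini-type fact that a sequence of continuous functions decreasing pointwise to an upper-semicontinuous limit on a compact space converges uniformly from above (the upper level sets $\{w_i^{(n,m)}-u_i^{(n)}\geq\eta\}$ form a decreasing sequence of closed subsets with empty intersection), while condition (iii) is the weak convergence displayed above. Define $v_i^{(n)}:=w_i^{(n,m(n))}+1/n$. Then (ii) yields $v_i^{(n+1)}\leq v_i^{(n)}$, (i) together with $w_i^{(n,m)}\geq u_i^{(n)}$ pins $v_i^{(n)}$ between $u_i^{(n)}+1/n$ and $u_i^{(n)}+2/n$ so that $v_i^{(n)}\searrow u_i$, and $dd^c v_i^{(n)}=dd^c w_i^{(n,m(n))}\geq -A'\omega$ uniformly; thus $\{v_i^{(n)}\}_n$ is a smooth good monotone approximation of $u_i$.

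The wedge convergence to $\mu$ now follows: since $dd^c v_i^{(n)}=dd^c w_i^{(n,m(n))}$, condition (iii) combined with the hypothesis $\bigwedge_i(\Omega_i+dd^c u_i^{(n)})\wedge R\rightharpoonup\mu$ gives $\langle\bigwedge_i(\Omega_i+dd^c v_i^{(n)})\wedge R,\varphi_k\rangle\to\langle\mu,\varphi_k\rangle$ for each $k$, and density of $\{\varphi_k\}$ combined with the uniformly bounded masses from Lemma~\ref{LemmaSignedMeasuresBoundedMasses} (applicable to the good monotone approximation $\{v_i^{(n)}\}$) promotes this to full weak convergence of signed measures. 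For the \emph{in particular} statement, Lemma~\ref{LemmaSignedMeasuresBoundedMasses} again bounds the masses of $\bigwedge_i(\Omega_i+dd^c u_i^{(n)})\wedge R$ uniformly, so every subsequence admits a weakly convergent sub-subsequence; applying the main statement to such a sub-subsequence produces a smooth good monotone approximation of $u_i$ whose wedge has the same sub-subsequential limit, and by the hypothesis $(u_1,\ldots,u_p)\in\mathcal{E}(R)$ this limit must equal the classical wedge $\bigwedge_i(\Omega_i+dd^c u_i)\wedge R$ irrespective of the smooth approximation. Uniqueness of subsequential limits therefore upgrades to full weak convergence on $X$, and restriction to any open $U\subset X$ is automatic since a continuous function on $U$ with compact support extends by zero to $X$. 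The principal obstacle is the simultaneous Dini-plus-additive-constants bookkeeping in the second paragraph, which is what delivers monotonicity in $n$, the correct pointwise limit, and the uniform $dd^c$ bound all at once.
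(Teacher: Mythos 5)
Your overall strategy---regularise each $u_i^{(n)}$ by smooth quasi-psh functions, then extract a diagonal sequence that is simultaneously monotone in $n$, converges to $u_i$, and tracks the wedge products on a countable dense set of test functions---is exactly the strategy of the paper's proof, and your conditions (ii) and (iii) are arranged just as there. Condition (ii) in particular is legitimate: the comparison function $w_i^{(n-1,m(n-1))}+1/\bigl(n(n-1)\bigr)$ is \emph{continuous} and strictly dominates the pointwise limit $u_i^{(n)}$, so the sets $\{w_i^{(n,m)}\geq w_i^{(n-1,m(n-1))}+1/(n(n-1))\}$ really are closed, decreasing in $m$, with empty intersection, and compactness finishes it (this is the Hartogs-type step the paper also uses).

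Condition (i), however, rests on a false statement. A sequence of continuous functions decreasing pointwise to an upper-semicontinuous limit on a compact space does \emph{not} in general converge uniformly from above: $w_i^{(n,m)}-u_i^{(n)}$ is only \emph{lower} semicontinuous, so the sets $\{w_i^{(n,m)}-u_i^{(n)}\geq\eta\}$ need not be closed and the Dini argument breaks. Concretely, take a bounded discontinuous quasi-psh function such as $u=\max\bigl(\sum_k 2^{-k}\log|z-1/k|,\,-10\bigr)$ near $0$: one has $u\equiv -10$ on a neighbourhood of each $1/k$ while $u(0)>-10$, so any continuous $w\geq u$ with $w(0)$ close to $u(0)$ satisfies $\sup (w-u)\geq u(0)+10$; hence no tail of a regularising sequence can lie below $u+1/n$. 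Since (i) is precisely what pins $v_i^{(n)}$ between $u_i^{(n)}+1/n$ and $u_i^{(n)}+2/n$ and thereby forces $v_i^{(n)}\searrow u_i$, without it you only obtain a decreasing sequence whose limit dominates $u_i$ and could be strictly larger on a small set, i.e.\ not a good monotone approximation of $u_i$. The paper repairs exactly this point by replacing the uniform bound with an $L^1$ bound $\|w_i^{(l)}-u_i^{(n_l)}\|_{L^1(X)}\leq 1/l^2$ (which monotone convergence does provide) and then identifying the decreasing limit with $u_i$: that limit is quasi-psh, dominates $u_i$, and equals $u_i$ almost everywhere, and two quasi-psh functions equal a.e.\ are equal everywhere. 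With that substitution your argument goes through; the remaining steps (transferring (iii) to the $v_i^{(n)}$, the mass bounds from Lemma~\ref{LemmaSignedMeasuresBoundedMasses}, and the subsequence argument for the ``in particular'' clause) are sound.
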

\begin{proof} For simplicity, we may assume that $\Omega _1=\ldots =\Omega _p=0$. 

By Theorem \ref{TheoremDemaillyRegularisation}, for each $i$ and $n$, there is a good monotone approximation $\{\Phi _m(u_i^{(n)})\}_m$ of $u_i^{(n)}$. 

Since $X$ is a compact metric space, the space $C^0(X)$ is separable. Therefore, there is a dense countable set $\mathcal{F}\subset C^0(X)$. We enumerate the elements in $\mathcal{F}$ as $\varphi _1,\varphi _2,\ldots $.

Since $\mu =\lim _{n\rightarrow\infty}dd^cu_1^{(n)}\wedge \ldots \wedge dd^cu_p^{(n)}\wedge R$, for each $l$ there is a number $n_l$ so that for all $\varphi \in \{\varphi _1,\ldots ,\varphi _l\}$ and for all $n\geq n_l$:
\begin{equation}
|\mu (\varphi )-dd^cu_1^{(n)}\wedge \ldots \wedge dd^cu_p^{(n)}\wedge R(\varphi )|\leq 1/l. 
\label{Equation1}\end{equation}
We can assume that $n_1<n_2<n_3\ldots $ Then for each $i$, the sequence $\{u_i^{(n_l)}\}_l$ is a good monotone approximation of $u_i$. Therefore, we can assume that $||u_i^{(n_l)}-u_i||_{L^1(X)}\leq 1/l^2$ for all $l$ and $i$. 
 
Since for each $l$ the tuple $(u_1^{(n_l)},\ldots ,u_p^{(n_l)})\in \mathcal{E}(R)$, and $\{\Phi _m(u_i^{(n_l)})\}_m$ is a smooth good monotone approximation of $u_i^{(n_l)}$, there is $m_l$ so that for all $m\geq m_l$ we have for all $\varphi \in \{\varphi _1,\ldots ,\varphi _m\}$:
\begin{equation}
|dd^c\Phi _m(u_1^{(n_l)})\wedge \ldots \wedge dd^c\Phi _m(u_i^{(n_l)}) (\varphi )-dd^cu_1^{(n_l)}\wedge \ldots \wedge dd^cu_p^{(n_l)}\wedge R(\varphi )|\leq 1/l. 
\label{Equation2}\end{equation}

Now we are ready to choose the sequence $\{v_i^{(l)}\}$. We first choose an intermediate sequence $w_i^{(l)}$.

Choose $w_i^{(1)}=\Phi _{m_1}(u_i^{n_1})$. We can also arrange so that $||w_i^{(1)}-u_i^{(n_1)}||_{L^1(X)}\leq 1$. 

Since $w_i^{(1)}$ is smooth, $w_i^{(1)}\geq u_i^{(n_1)}\geq u_i^{(n_2)}$, and $\{\Phi _m(u_i^{(n_2)})\}$ is a good monotone approximation of $u_i^{(n_2)}$, by Hartogs' lemma (see \cite{sadullaev}), for $m$ large enough we have $w_i^{(1)}+1\geq \Phi _m(u_i^{(n_2)})$ and (\ref{Equation2}) is satisfied. We choose $w_i^{(2)}=\Phi _m(u_i^{(n_2)})$ for one such $m$. 

Constructing inductively, we can find a sequence $w_i^{(l)}=\Phi _m(u_i^{(n_l)})$ for large enough $m$, so that (\ref{Equation2}) is satisfied and: i) $||w_i^{(l)}-u_i^{(n_l)}||_{L^1}\leq 1/l^2$, and ii) $w_i^{(l)}\leq w_i^{(l-1)}+1/l^2$.  

Now we define $$v_i^{(l)}=w_i^{(l)}+\sum _{h\geq l+1}\frac{1}{h^2}.$$

Then $v_i^{(l)}-v_i^{(l-1)}=w_i^{(l)} -w_i^{(l)}-1/l^2\leq 0$ by the choice of $w_i^{(l)}$. Moreover, $dd^cv_i^{(l)}=dd^cw_i^{(l)}$ and it is easy to check that $v_i^{(l)}$ decreases to $u_i$. Hence $\{v_i^{(l)}\}_l$ is a smooth good monotone approximation of $u_i$. 

Since $dd^cv_i^{(l)}\geq -A\omega $ for all $i$ and $l$, by Lemma \ref{LemmaSignedMeasuresBoundedMasses} it follows that after working with a subsequence of $\{v_i^{(l)}\}$ if needed, there is a signed measure $\mu '$ so that
\begin{eqnarray*}
\lim _{n\rightarrow\infty}dd^cv_1^{(n)}\wedge \ldots \wedge dd^cv_p^{(n)}\wedge R=\mu '.
\end{eqnarray*} 

Then from (\ref{Equation2}) we have that $\mu '(\varphi )=\mu (\varphi )$ for all $\varphi \in \mathcal{F}$. Since $\mathcal{F}$ is dense in $C^0(X)$, we have the desired conclusion. 
\end{proof}

Now we turn our attention to a general tuple $(u_1,\ldots ,u_p)$ of quasi-psh functions on a compact K\"ahler manifold $(X,\omega )$. By the very definition, if $(u_1,\ldots ,u_p)\notin \mathcal{E}(R)$, then it is not guaranteed that we can assign a unique (signed) measure to the (not yet defined) intersection $(\Omega _1+dd^cu_1)\wedge \ldots \wedge (\Omega _p+dd^cu_p)\wedge R$ using good monotone approximations. However, we can assign to the tuple $(\Omega _1+dd^c u_1,\ldots ,\Omega _p+dd^cu_p,R)$ a collection $\mathcal{G}$ of signed measures in the following manner. Let $\{u_i^{(n)}\}_n$ (for $i=1,\ldots ,p$) be a good monotone approximation of $u_i$. Assume also that all $u_i^{(n)}$ are smooth functions. Then by Lemma \ref{LemmaSignedMeasuresBoundedMasses}, we can write for each $n$
\begin{eqnarray*}
(\Omega _1+dd^cu_1^{(n)})\wedge \ldots \wedge (\Omega _p+dd^cu_p^{(n)})\wedge R=\mu _n^+-\mu _n^{-},
\end{eqnarray*}
where $\sup _{n}||\mu _n^{\pm}||<\infty$. Therefore, there is a subsequence $\{n(k)\}_k$ so that $\mu _{n(k)}^{\pm}$ weakly converges to positive measures $\mu ^{\pm}$, and the signed measure $\mu =\mu ^+-\mu ^-$ is one element in the collection $\mathcal{G}$. Below is the precise definition. 
\begin{definition}
Let $(X,\omega )$ be a compact K\"ahler manifold, $u_1,\ldots ,u_p$ quasi-psh functions, $\Omega _1,\ldots ,\Omega _p$ are smooth closed $(1,1)$ forms, and $R$ a positive closed $(k-p,k-p)$ current. Let $\mathcal{G}(u_1,\ldots ,u_p, \Omega _1,\ldots ,\Omega _p, R)$ be the set of signed measures $\mu$ of the form: 
\begin{eqnarray*}
\mu = \lim _{n\rightarrow\infty}(\Omega _1+dd^cu_1^{(n)})\wedge \ldots \wedge (\Omega _p+dd^cu_p^{(n)})\wedge R,
\end{eqnarray*}
where $\{u_i^{(n)}\}_n$ ($i=1,\ldots ,p$) is a good monotone approximation of $u_i$ by smooth quasi-psh functions. 
\label{DefinitionCollectionG}\end{definition}
Here are some properties of this collection. 
\begin{proposition} Let $(X,\omega )$ be a compact K\"ahler manifold. Let $u_1,\ldots ,u_p$ be quasi-psh functions, $\Omega _1,\ldots ,\Omega _p$ smooth closed $(1,1)$ forms and $R$ a positive closed $(k-p,k-p)$ current.  

1) Let  $u_1',\ldots ,u_p'$ be quasi-psh functions and $\Omega _1',\ldots ,\Omega _p'$ be smooth closed $(1,1)$ forms so that $\Omega _i+dd^cu_i=\Omega _i'+dd^cu_i'$ for all $i=1,\ldots ,p$. Then $$\mathcal{G}(u_1,\ldots ,u_p,\Omega _1,\ldots ,\Omega _p,R)=\mathcal{G}(u_1',\ldots ,u_p',\Omega _1',\ldots ,\Omega _p',R).$$ 

2) Let $\{u_i^{(n)}\}_{n}$ ($i=1,\ldots ,p$) be a good monotone approximation of $u_i$. Assume that for each $n$, the tuple $(u_1^{(n)},\ldots ,u_p^{(n)})$ belongs to $\mathcal{E}(R)$, and that there is a signed measure $\mu $ so that
\begin{eqnarray*}
\lim _{n\rightarrow\infty}(\Omega _1+dd^cu_1^{(n)})\wedge \ldots \wedge (\Omega _p+dd^cu_p^{(n)})\wedge R=\mu .
\end{eqnarray*}
Then $\mu \in \mathcal{G}(u_1,\ldots ,u_p, \Omega _1,\ldots ,\Omega _p,R)$.
\label{PropositionPropertyCollectionG}\end{proposition}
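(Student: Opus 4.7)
The plan is to handle the two parts separately, treating part 2) as a direct invocation of Lemma \ref{LemmaGoodMonotoneConvergence} and part 1) via an explicit correspondence between smooth good monotone approximations for the two parametrizations of the same current.

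For part 1), the key observation is that the hypothesis $\Omega _i+dd^cu_i=\Omega _i'+dd^cu_i'$ forces $dd^c(u_i-u_i')=\Omega _i'-\Omega _i$ to be a \emph{smooth} closed $(1,1)$ form on $X$. By elliptic regularity for the $dd^c$ operator acting on distributions (the mixed derivatives $\partial ^2/\partial z_j\partial \overline{z}_k$ of the $L^1_{loc}$ difference $w_i:=u_i-u_i'$ are smooth in every local chart, which implies $w_i$ itself is smooth), the function $w_i$ is smooth on $X$. Given any $\mu \in \mathcal{G}(u_1,\ldots ,u_p,\Omega _1,\ldots ,\Omega _p,R)$ realized as the limit arising from a smooth good monotone approximation $\{u_i^{(n)}\}$ of $u_i$, I would set $(u_i')^{(n)}:=u_i^{(n)}-w_i$. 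These are smooth functions decreasing to $u_i'$, and $dd^c(u_i')^{(n)}=dd^cu_i^{(n)}+(\Omega _i-\Omega _i')$ is bounded below by $-A'\omega $ for some $A'$ since $\Omega _i-\Omega _i'$ is smooth. Hence $\{(u_i')^{(n)}\}$ is a smooth good monotone approximation of $u_i'$. A direct algebraic check gives $\Omega _i'+dd^c(u_i')^{(n)}=\Omega _i+dd^cu_i^{(n)}$, so the wedge products coincide level-by-level, and the same limit $\mu $ witnesses $\mu \in \mathcal{G}(u_1',\ldots ,u_p',\Omega _1',\ldots ,\Omega _p',R)$. The reverse inclusion is identical by symmetry.

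For part 2), the hypothesis is precisely the input to Lemma \ref{LemmaGoodMonotoneConvergence}: a good (but possibly non-smooth) monotone approximation $\{u_i^{(n)}\}$ of $u_i$ with each tuple $(u_1^{(n)},\ldots ,u_p^{(n)})\in \mathcal{E}(R)$ and with convergence of the wedge intersections to $\mu $. The lemma (whose proof is written for $\Omega _i=0$ but extends to general smooth $\Omega _i$ by the same diagonal-plus-Hartogs argument, or alternatively by absorbing the $\Omega _i$ as in part 1) produces a \emph{smooth} good monotone approximation $\{v_i^{(n)}\}$ of $u_i$ with $\lim _{n\to\infty }(\Omega _1+dd^cv_1^{(n)})\wedge \ldots \wedge (\Omega _p+dd^cv_p^{(n)})\wedge R=\mu $. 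Definition \ref{DefinitionCollectionG} then yields $\mu \in \mathcal{G}(u_1,\ldots ,u_p,\Omega _1,\ldots ,\Omega _p,R)$ directly.

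The main technical obstacle is really in part 1), namely justifying that $u_i-u_i'$ is smooth from smoothness of $dd^c(u_i-u_i')$. Since $u_i$ and $u_i'$ are only quasi-psh (hence $L^1_{loc}$) and their difference need not be upper-semicontinuous in either direction, one must argue at the level of distributions: in local holomorphic coordinates, smoothness of all mixed second derivatives $\partial ^2/\partial z_j\partial \overline{z}_k$ forces smoothness of the underlying distribution, for instance via local solvability of $\overline{\partial }$ with smooth data or by ellipticity of the complex Laplacian on each bidegree type. Once this regularity step is granted, the remainder of both parts is routine manipulation.
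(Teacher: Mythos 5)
Your proof is correct and follows essentially the same route as the paper: part 1) via the subtraction trick $\phi_i=u_i-u_i'$ (whose smoothness the paper asserts without the elliptic-regularity justification you supply) and the level-by-level identity $\Omega_i+dd^cu_i^{(n)}=\Omega_i'+dd^c(u_i^{(n)}-\phi_i)$, and part 2) by reduction to Lemma \ref{LemmaGoodMonotoneConvergence}, which the paper likewise invokes (saying the proof is "similar to" that of the lemma, whereas you correctly observe it is a direct application). No gaps.
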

\begin{proof}
1) If we put $\phi _i=u_i-u_i'$, then $dd^c\phi _i=\Omega _i'-\Omega _i$, and hence $\phi _i$ is a smooth function. Whenever $\{u_i^{(n)}\}_n$ is a good monotone approximation of $u_i$ by smooth quasi-psh functions, then $\{u_i'^{(n)}\}_n=\{u_i^{(n)}-\phi _i\}_n$ is a good monotone approximation of $u_i'$ by smooth quasi-psh functions and vice versa. Moreover, for all $n$ and $i$
\begin{eqnarray*}
\Omega _i+dd^cu_i^{(n)}=\Omega _i'+dd^cu_i'^{(n)}. 
\end{eqnarray*}
Hence for all $n$, we have
\begin{eqnarray*}
(\Omega _1+dd^cu_1^{(n)})\wedge \ldots \wedge (\Omega _p+dd^cu_p^{(n)})\wedge R=(\Omega _1'+dd^cu_1'^{(n)})\wedge \ldots \wedge (\Omega _p'+dd^cu_p'^{(n)})\wedge R.
\end{eqnarray*}
From this, we obtain the conclusion. 

2) The proof is similar to that of Lemma \ref{LemmaGoodMonotoneConvergence}. 
\end{proof} 

From part 1) of Proposition \ref{PropositionPropertyCollectionG}, it follows that for any tuple of almost positive closed $(1,1)$ currents $(T_1,\ldots T_p)$ and a positive closed $(k-p,k-p)$ current $R$, there is a well-defined collection $\mathcal{G}(T_1,\ldots ,T_p,R)$ of signed measures, which reduces to one element in the case when the intersection $T_1\wedge \ldots \wedge T_p\wedge R$ is classically defined. Our main idea is to obtain one single object from this large collection, which is as close to positive measures as possible. An idea would be to define a strong submeasure using (\ref{EquationExampleSubmeasure}). There are some technical difficulties, since apparently this collection $\mathcal{G}(T_1,\ldots ,T_p,R)$ is a priori not bounded in mass, and hence the resulting strong submeasure might be $\infty$ identically. Also, this collection contains many signed measures of different natures, and so we need to have a criterion to say among two signed measures, which is closer to be a positive measure. We elaborate on this in the next paragraph, after an example illustrating these points.

{\bf Example 3.} Let $X$ be a compact K\"ahler surface and $D\subset X$ an irreducible curve. Let $T_1=T_2=[D]$, where $[D]$ is the current of integration over $D$. Then, the collection $\mathcal{G}(T_1,T_2)$ contains all signed measures of the form $\Omega \wedge [D]$, where $\Omega$ is an arbitrary smooth closed $(1,1)$ form cohomolgous to $[D]$. 
\begin{proof}
Let $\Omega $ be a smooth closed $(1,1)$ form cohomologous to $[D]$. Then we can write $T_1=T_2=\Omega +dd^cu$ for some quasi-psh function $u$. From part 2) of Proposition \ref{PropositionPropertyCollectionG}, we only need to construct two good monotone approximations $\{u_1^{(n)}\}_n$ and $\{u_2^{(n)}\}_n$ of $u$ by bounded quasi-psh functions so that 
\begin{eqnarray*}
\lim _{n\rightarrow\infty}(\Omega +dd^cu_1^{(n)})\wedge (\Omega +dd^cu_2^{(n)})=\Omega \wedge [D].
\end{eqnarray*}
We choose $u_1^{(n)}=\max \{u,-n\}$. Since $(\Omega +dd^cu_1^{(n)})\wedge [D]$ is classically defined for all $n$, we can find a good monotone approximation $\{u_2^{(n)}\}_n$ of $u$ by smooth quasi-psh functions so that
\begin{eqnarray*}
\lim _{n\rightarrow\infty}(\Omega +dd^cu_1^{(n)})\wedge (\Omega +dd^cu_2^{(n)})=\lim _{n\rightarrow\infty} (\Omega +dd^cu_1^{(n)})\wedge [D].
\end{eqnarray*}

Since $[D]$ is smooth outside of $D$, it follows that $u$ is smooth outside of $D$.  Moreover, since $[D]$ has Lelong number $1$ along $D$, it follows that 
$\lim _{x\rightarrow D}u(x)=-\infty$. Therefore, $u_1^{(n)}$ is $-n$ in an open neighborhood of $D$, and hence $dd^cu_1^{(n)}=0$ in a neighborhood of $D$. Therefore, for all $n$ we have $ (\Omega +dd^cu_1^{(n)})\wedge [D]=\Omega \wedge [D]$, and this proves the conclusion. 
\end{proof}

We first define an order on strong submeasures. If $\mu $ is a (positive) measure, then we define as usual its norm: $||\mu ||=\mu (1)$. If $\mu $ is a signed measure, we define its norm as follows: 

$||\mu ||=\inf\{||\mu ^+||+||\mu ^-||:~\mu =\mu ^+-\mu ^-$ where $\mu ^{\pm}$ are positive measures$\}$.

We then define a negative-part norm of $\mu$ as follows:

$||\mu ||_{neg}=\inf\{||\mu ^-||:~\mu =\mu ^+-\mu ^-$ where $\mu ^{\pm}$ are positive measures$\}$.

It is easy to check that $||\mu ||_{neg}=0$ iff $\mu $ is a positive measure. Hence, it is reasonable to say that a signed measure $\mu$ with $||\mu ||$ fixed is closer to be a (positive) measure if $||\mu ||_{neg}$ is smaller.  

\begin{definition} {\bf Partial order on strong submeasures.} Let $\mu _1=\sup _{\chi _1\in \mathcal{G}_1}\chi _1$ and $\mu _2=\sup _{\chi _2\in \mathcal{G}_2}\chi _2$ be two strong submeasures, where $\mathcal{G}_1$ and $\mathcal{G}_2$ be two collections of signed measures on $X$ so that $\sup _{\chi _1\in \mathcal{G}_1}||\chi _1||, \sup _{\chi _2\in \mathcal{G}_2}||\chi _2||<\infty$. We write $\mu _1\succ \mu _2$ if either 

i) $\inf _{\chi _1\in \mathcal{G}_1}||\chi _1||_{neg}<\inf _{\chi _2\in \mathcal{G}_2}||\chi _2||_{neg}$,

or

ii) $\inf _{\chi _1\in \mathcal{G}_1}||\chi _1||_{neg}=\inf _{\chi _2\in \mathcal{G}_2}||\chi _2||_{neg}$, and $\mu _1(\varphi )\geq \mu _2(\varphi )$ for all $\varphi \in C^0(X)$. 
 \label{DefinitionOrderOnMeasures}\end{definition}

We note that if $\mu =\mu ^+-\mu ^-\in \mathcal{G}(T_1,\ldots ,T_p,R)$, where $\mu ^{\pm}$ are positive measures then $\mu ^{+}(1)-\mu ^{-}(1)=\{T_1\}\cdots \{T_p\}\cdot \{R\}$, where the right hand side is the intersection in cohomology, is a {\bf constant}. Therefore, from the above arguments, we see that the rough idea to proceed is to use the construction (\ref{EquationExampleSubmeasure}) to signed measures in $\mathcal{G}(T_1,\ldots ,T_p,R)$, whose negative parts have mass of the smallest value. The only problem is that there may be no signed measure in $\mathcal{G}(T_1,\ldots ,T_p,R)$ whose negative part has the smallest value, and even so, there may be other signed measures whose negative parts have masses converging to the smallest value. Such limit signed measures should be considered as having the same roles as the signed measures in $\mathcal{G}(T_1,T_2,\ldots ,T_p,R)$ whose negative parts have masses of the smallest value. To cope with these issues, we proceed as follows. 

Let  the assumptions be as above. We define
\begin{equation}
\kappa (T_1,\ldots ,T_p,R)=\inf_{\mu \in \mathcal{G}(T_1,\ldots ,T_p,R)} ||\mu ||_{neg}.
\label{EquationNegativeNormOfG}\end{equation}
   
We also define $\mathcal{G}^*(T_1,\ldots ,T_p,R)$ to be the closure of $\mathcal{G}(T_1,\ldots ,T_p,R)$ with respect to the weak convergence of signed measures. More precisely, 

$\mathcal{G}^*(T_1,\ldots ,T_p,R)=\{\mu :$ there exists $\mu _n\in \mathcal{G}(T_1,\ldots ,T_p,R)$ weakly converging to $\mu \}$. 

Finally, the least negative intersection $\Lambda (T_1,\ldots ,T_p,R)$ is the strong submeasure whose action on $\varphi \in C^0(X)$ is given by
\begin{eqnarray*}
\Lambda (T_1,\ldots ,T_p,R)(\varphi )=\sup _{\mu \in \mathcal{G}^*(T_1,\ldots ,T_p,R),~||\mu ||_{neg}=\kappa (T_1,\ldots ,T_p,R)}\mu (\varphi ). 
\end{eqnarray*}
 
Now we state and prove the main results concerning the least negative intersection.   
\begin{theorem}
 Let $X$ be a compact K\"ahler manifold of dimension $k$, $T_1,\ldots ,T_p$ positive closed $(1,1)$ currents on $X$ and $R$ a positive closed $(k-p,k-p)$ current on $X$. Then there is a strong submeasure $\Lambda (T_1,\ldots ,T_p,R)$ with the following properties: 
 
 1) {\bf Symmetry.} $\Lambda (T_1,\ldots ,T_p,R)$ is symmetric in $T_1,\ldots ,T_p$. 
 
 2) {\bf Compatibility with cohomology.} The mass $\Lambda (T_1,\ldots ,T_p, R)(1)$ is the intersection number of cohomology classes $\{T_1\}\cdot \{T_2\}\cdots \{T_p\}\cdot \{R\}$. 
 
 3) {\bf Compatibility with classical wedge intersection.} If $U\subset X$ is an open set on which the wedge intersection $T_1|_U\wedge \ldots \wedge T_p|_U\wedge R|_U$ is classically defined (that is, Bedford-Taylor's monotone convergence is satisfied, see Section \ref{SectionIntersection} for more detail), then 
 \begin{eqnarray*}
 \Lambda (T_1,\ldots ,T_p,R)|_U=T_1|_U\wedge \ldots \wedge T_p|_U\wedge R|_U.
 \end{eqnarray*}
 
 4) For all constants $B$: $$\Lambda (T_1,\ldots ,T_p,R)(\varphi +B)=\Lambda (T_1,\ldots ,T_p,R)(\varphi )+B\{T_1\}\cdots \{T_p\}\cdot \{R\}.$$ 
 
 5) ({\bf Maximalty.}) $\Lambda (T_1,\ldots ,T_p,R)$ is the supremum of $\mathcal{G}^*(T_1,\ldots, T_p,R)$ with respect to the partial order in Definition \ref{DefinitionOrderOnMeasures}. 
 
 6) Let $T_1'$ be another positive closed $(1,1)$ current on $X$. Assume that $\kappa _{T_1,T_2,\ldots ,T_p,R}$, $\kappa _{T_1,T_2,\ldots ,T_p,R}>0$. Then $\Lambda (T_1+T_1',T_2,\ldots ,T_p,R)\geq \Lambda (T_1,T_2,\ldots ,T_p,R)+\Lambda (T_1',T_2,\ldots ,T_p,R)$.
\label{TheoremLeastNegativeIntersection}\end{theorem}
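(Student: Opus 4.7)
The plan is to verify the six properties in order, with Parts 1--5 following fairly directly from the construction and Part 6 requiring the real technical work.

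For Part 1 (symmetry) and Part 2 (cohomology), I would use the following simple observations at the level of smooth approximations. If $\{u_i^{(n)}\}$ is a smooth good monotone approximation of a quasi-psh potential of $T_i$, then $(\Omega_1 + dd^c u_1^{(n)}) \wedge \cdots \wedge (\Omega_p + dd^c u_p^{(n)}) \wedge R$ is a symmetric wedge product of smooth closed forms. Symmetry descends to the defining collection $\mathcal{G}(T_1,\ldots,T_p,R)$, to its weak closure $\mathcal{G}^*$, and hence to $\Lambda$. For Part 2, the pairing of each such wedge product with the constant $1$ equals the cohomological intersection number $\{T_1\}\cdots\{T_p\}\cdot\{R\}$, which is a constant; weak limits preserve this value, so $\mu(1) = \{T_1\}\cdots\{T_p\}\cdot\{R\}$ for every $\mu \in \mathcal{G}^*$, and taking supremum yields Part 2. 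Part 4 is then immediate, since $\mu(\varphi+B) = \mu(\varphi) + B\mu(1)$ and $\mu(1)$ is constant on $\mathcal{G}^*$.

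For Part 3, I would invoke Lemma \ref{LemmaGoodMonotoneConvergence}: when the wedge intersection is classically defined on an open set $U$, every good monotone approximation on $U$ produces the same positive measure $T_1|_U \wedge \cdots \wedge T_p|_U \wedge R|_U$, and this is stable under weak convergence. Hence every $\mu \in \mathcal{G}^*(T_1,\ldots,T_p,R)$ restricts on $U$ to the classical intersection, so does $\Lambda(T_1,\ldots,T_p,R)|_U$. Part 5 (maximality) is essentially tautological from Definition \ref{DefinitionOrderOnMeasures}: by construction $\Lambda$ is obtained by restricting to those $\mu \in \mathcal{G}^*$ that minimize $\|\cdot\|_{neg}$ and taking pointwise supremum, which is exactly the supremum of $\mathcal{G}^*$ in the partial order $\succ$.

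Part 6 is the main obstacle. The key algebraic identity is that if $\{u_1^{(n)}\}$ and $\{u_1'^{(n)}\}$ are smooth good monotone approximations of potentials of $T_1$ and $T_1'$, then $\{u_1^{(n)} + u_1'^{(n)}\}$ is a smooth good monotone approximation of a potential of $T_1 + T_1'$, and the wedge distributes:
\begin{equation*}
\bigl((\Omega_1+\Omega_1') + dd^c(u_1^{(n)}+u_1'^{(n)})\bigr) \wedge \prod_{i=2}^p (\Omega_i + dd^c u_i^{(n)}) \wedge R = A_n + A'_n,
\end{equation*}
where $A_n$ and $A'_n$ are the respective wedge products built from $T_1$ and from $T_1'$ with the \emph{same} approximations of $T_2,\ldots,T_p$. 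So if $\chi = \lim_n A_n \in \mathcal{G}(T_1,\ldots,T_p,R)$ and $\chi' = \lim_n A'_n \in \mathcal{G}(T_1',T_2,\ldots,T_p,R)$, then $\chi + \chi' \in \mathcal{G}(T_1+T_1',T_2,\ldots,T_p,R)$. Fixing $\varphi \in C^0(X)$ and $\epsilon > 0$, I would use a diagonal/re-approximation argument in the spirit of Lemma \ref{LemmaGoodMonotoneConvergence} to choose $\chi$ and $\chi'$ achieving $\chi(\varphi) \geq \Lambda(T_1,\ldots)(\varphi) - \epsilon/2$, $\chi'(\varphi) \geq \Lambda(T_1',\ldots)(\varphi) - \epsilon/2$, with $\|\chi\|_{neg} = \kappa_{T_1,\ldots}$ and $\|\chi'\|_{neg} = \kappa_{T_1',\ldots}$, \emph{while sharing} the approximations of $T_2,\ldots,T_p,R$. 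By subadditivity of $\|\cdot\|_{neg}$, one has $\|\chi+\chi'\|_{neg} \leq \kappa_{T_1,\ldots} + \kappa_{T_1',\ldots}$. The hypothesis on the $\kappa$'s (which forces $\kappa_{T_1+T_1',\ldots} = \kappa_{T_1,\ldots} + \kappa_{T_1',\ldots}$, i.e., subadditivity of the infimum is an equality here) ensures $\chi + \chi'$ is a minimum-negative-norm element of $\mathcal{G}^*(T_1+T_1',T_2,\ldots,T_p,R)$, whence $\Lambda(T_1+T_1',T_2,\ldots,T_p,R)(\varphi) \geq (\chi+\chi')(\varphi) \geq \Lambda(T_1,\ldots)(\varphi) + \Lambda(T_1',\ldots)(\varphi) - \epsilon$. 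Sending $\epsilon \to 0$ concludes.

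The hard part is precisely the simultaneous approximation step in Part 6: one must arrange that the near-optimal approximants for $\chi$ and $\chi'$ can be chosen with \emph{matching} good monotone approximations of $T_2,\ldots,T_p$, so that the wedge products add termwise. This is where a careful diagonal extraction modeled on the proof of Lemma \ref{LemmaGoodMonotoneConvergence} is essential, together with the hypothesis controlling $\kappa_{T_1+T_1',\ldots}$ relative to $\kappa_{T_1,\ldots}$ and $\kappa_{T_1',\ldots}$ (without which the sum $\chi+\chi'$ need not lie in the minimum-negative-norm subcollection of $\mathcal{G}^*(T_1+T_1',T_2,\ldots,T_p,R)$).
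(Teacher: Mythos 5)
Your proposal follows essentially the same route as the paper: parts 1, 2, 4 and 5 are handled identically, part 3 via Lemma \ref{LemmaGoodMonotoneConvergence} as in the text, and part 6 via the inclusion $\mathcal{G}^*(T_1,\ldots,T_p,R)+\mathcal{G}^*(T_1',\ldots,T_p,R)\subset \mathcal{G}^*(T_1+T_1',\ldots,T_p,R)$ together with subadditivity of $\|\cdot\|_{neg}$, which is exactly the paper's (one-sentence) argument. Two remarks. First, you skip the preliminary step of the paper's proof: before verifying 1)--6) one must check that $\Lambda(T_1,\ldots,T_p,R)$ is a strong submeasure at all, i.e.\ that $\sup\|\mu\|<\infty$ over the minimizing subcollection of $\mathcal{G}^*$; this follows from $\mu^+(1)-\mu^-(1)=\{T_1\}\cdots\{T_p\}\cdot\{R\}$ and $\mu^-(1)=\kappa$, and should be stated. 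Second, the ``simultaneous approximation'' issue you flag in part 6 --- that a near-optimal $\chi$ for $T_1$ and a near-optimal $\chi'$ for $T_1'$ arise from possibly different good monotone approximations of $u_2,\ldots,u_p$, so that $\chi+\chi'$ is not obviously a limit of wedge products for $T_1+T_1'$ --- is a genuine point which the paper's proof silently elides; your diagonal-extraction plan in the spirit of Lemma \ref{LemmaGoodMonotoneConvergence} is the right way to address it, but you leave it unexecuted, so as written your part 6 is no more complete than the paper's, only more candid about where the work lies. Your reading of the $\kappa$ hypothesis (that it must force $\kappa_{T_1+T_1',\ldots}=\kappa_{T_1,\ldots}+\kappa_{T_1',\ldots}$, as happens when both vanish) is the correct interpretation of the garbled condition in the statement.
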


\begin{proof}[Proof of Theorem \ref{TheoremLeastNegativeIntersection}] To show that $\Lambda (T_1,\ldots ,T_p,R)$ is a strong submeasure, by part 3) of Theorem \ref{TheoremSubmeasureBasicProperty}, it suffices to show that $$\sup _{\mu \in \mathcal{G}^*(T_1,\ldots ,T_p,R),~||\mu ||_{neg}=\kappa (T_1,\ldots ,T_p,R)}||\mu ||<\infty .$$
But this is clear, since if we write $\mu =\mu ^+-\mu ^-$ with $||\mu ^{-}||=\mu ^{-}(1)=\kappa (T_1,\ldots ,T_p,R)$, then from $$||\mu ^{+}||=\mu ^{+}(1)=\mu (1)+\mu ^{-}(1)=\{T_1\}\cdot \{T_p\}\cdot \{R\}+\kappa (T_1,\ldots ,T_p,R),$$
we have
$$||\mu ||\leq ||\mu ^+||+||\mu ^-||=\{T_1\}\cdot \{T_p\}\cdot \{R\}+2\kappa (T_1,\ldots ,T_p,R)$$
for all such $\mu $.

1) The symmetry between $T_1,\ldots ,T_p$ is clear since in the definition of $\Lambda (T_1,\ldots ,T_p,R)$ we use all possible good monotone approximations by smooth quasi-psh functions of the quasi-potentials of all $T_1,\ldots ,T_p$. 

2) If $\mu \in \mathcal{G}(T_1,\ldots ,T_p,R)$, then it is a weak convergence limit 
\begin{eqnarray*}
\mu =\lim _{n\rightarrow\infty}(\Omega _1+dd^cu_1^{(n)})\wedge \ldots \wedge (\Omega _p+dd^cu_p^{(n)})\wedge R.
\end{eqnarray*}
Therefore, since $(\Omega _1+dd^cu_1^{(n)})\wedge \ldots \wedge (\Omega _p+dd^cu_p^{(n)})\wedge R(1)$ is the cohomology intersection $\{\Omega _1+dd^cu_1^{(n)}\}\cdots \{\Omega _1+dd^cu_p^{(n)}\}\cdot \{R\} $, which is the same as $\{T_1\}\cdots \{T_p\}\cdot \{R\} $ for all $n$, we obtain the conclusion.  

3) This follows from the definition of classically defined wedge intersections.

4) This follows from the fact that for any $\mu \in \mathcal{G}(T_1,\ldots ,T_p,R)$, a function $\varphi \in C^0(X)$ and a constant $B$, we have
\begin{eqnarray*}
\mu (\varphi +B)=\mu (\varphi )+B\mu (1)=\mu (\varphi )+B\{T_1\}\cdots \{T_p\}\cdot \{R\}. 
\end{eqnarray*} 

5) This is obvious. 

6) This follows readily from the fact that under this assumption, we have $\mathcal{G}^*(T_1,T_2,\ldots ,T_p,R)+\mathcal{G}^*(T_1',T_2,\ldots ,T_p,R)\subset \mathcal{G}^*(T_1+T_1',T_2,\ldots ,T_p,R)$. 
\end{proof}
      
\begin{proof}[Proof of Proposition \ref{PropositionIntersectionProjectiveSpace}]
1) It is known that if $T_i=\Omega _i+dd^cu_i$ is a positive closed $(1,1)$ current in $\mathbb{P}^k$, then there is a good monotone approximation $\{u_i^{(n)}\}_n$ of $u_i$ by smooth quasi-psh functions so that $\Omega _i+dd^cu_i^{(n)}$ is a positive form for all $n$. Therefore, for all $n$, the wedge intersection $(\Omega _i+dd^cu_i^{(n)})\wedge \ldots \wedge (\Omega _p+dd^cu_i^{(n)})\wedge R$ is a positive measure. Any subsequence limit of this sequence will be also a positive measure, and hence the number $\kappa (T_1,\ldots ,T_p,R)$ is $0$. From this and definition, we see that $\Lambda (T_1,\ldots ,T_p,R)$ is in $SM^+(X)$. 

2) Since the wedge intersection $[D]\wedge [D]$ is classically defined {\bf outside of} $D$, and there the resulting is $0$, it follows that $\Lambda ([D],[D])$ has support in $D$. By 1), $\Lambda ([D],[D])$ is in $SM^+(X)$ and its mass is $\Lambda ([D],[D])(1)=\{D\}.\{D\}=1$. Therefore, for all $\varphi \in C^0(X)$, by the positivity of $\Lambda ([D],[D])$ and the fact that it has support in $D$ we have
\begin{eqnarray*}
\Lambda ([D],[D])(\varphi )\leq \Lambda ([D],[D])(\sup _D\varphi ) =\sup _D\varphi . 
\end{eqnarray*}
Now we prove the reverse inequality. Fix a smooth closed $(1,1)$ form $\Omega$ having the same cohomology class as that of $[D]$. From Example 3, we see that $(\Omega +dd^cv)\wedge [D]$ belongs to $\mathcal{G}([D],[D])$ for all smooth function $v$ on $X$. For any point $p\in D$, we choose an other line $D_1\subset X=\mathbb{P}^2$, so that $D\cap D_1=\{p\}$. Then $[D_1]$ has the same cohomology class as $[D]$, and hence we can write $[D_1]=\Omega +dd^cu_1$ for some quasi-psh function $u_1$. By the argument in the proof of 1), we can find a good monotone approximation $\{v_n\}$ of $u_1$ by smooth quasi-psh functions and so that $\Omega +dd^cv_n\geq 0$ for all $n$. Note that the intersection $[D_1]\wedge [D]$ is classically defined, and hence we have
\begin{eqnarray*}
\lim _{n\rightarrow\infty}(\Omega +dd^cv_n)\wedge [D]=\delta _p,
\end{eqnarray*}
the Dirac measure at $p$. Therefore, $\delta _p\in \mathcal{G}^*([D],[D])$ with $||\delta _p||_{neg}=0=\kappa ([D],[D])$. Hence, by definition of $\Lambda ([D],[D])$ we have
\begin{eqnarray*}
\Lambda ([D],[D])(\varphi )\geq \delta _p(\varphi )=\varphi (p),
\end{eqnarray*}
for all $p\in D$. Thus 
\begin{eqnarray*}
\Lambda ([D],[D])(\varphi )\geq \sup _D\varphi,
\end{eqnarray*}
as wanted. 

3) It is well-known that $\{E\}.\{E\}=-1$ in cohomology. We denote by $\pi :X\rightarrow \mathbb{P}^2$ the blowup. Now we fix a point $x_0\in E$ and a line $\Sigma _0$ in $\mathbb{P}^2$ so that the intersection between the strict transform $\widetilde{\Sigma _0}$ and $E$ is $x_0$. If we write $\widetilde{\Sigma _0}=\Omega _0+dd^c v_0$, then there is a good monotone approximation $\{v_n\}$ of $v_0$ so that $\lim _{n\rightarrow\infty}(\Omega _0+dd^cv_n)\wedge [E]=\delta _{x_0}$. Now we use that in $\mathbb{P}^2$ there is a sequence of smooth positive closed $(1,1)$ forms $\omega _n$, in the same cohomology class as $[\Sigma _0]$, so that $\lim _{n\rightarrow \infty}\omega _n=[\Sigma _0]$. Then $\Omega _n=\pi ^*(\omega _n)$ is a sequence of positive closed smooth $(1,1)$ forms on $X$ weakly converging to $[\pi ^*(\Sigma _0)]$. Since in cohomology $\{\pi ^*(\omega _n)\}.\{E\}=0$, it follows that $\pi ^*(\omega _n)\wedge [E]=0$. Since $\pi ^*(\omega _n)-\Omega -dd^c v_n$ is a smooth closed $(1,1)$ form having the same cohomology class as $[E]$, it follows by Example 3 that 
\begin{eqnarray*}
(-\Omega -dd^c v_n)\wedge [E]=(\pi ^*(\omega _n)-\Omega -dd^c v_n)\wedge [E]\in \mathcal{G}([E],[E])
\end{eqnarray*}
for all $n=1,2,\ldots $. Therefore, by taking limit when $n\rightarrow\infty$, we obtain that $-\delta _{x_0}\in \mathcal{G}^*([E],[E])$. Since $-\delta _{x_0}$ is a negative measure and has mass $-1=\{E\}.\{E\}$ negative, it follows that $\kappa _{[E],[E]}=-1$. Therefore, if $\mu\in \mathcal{G}^*([E],[E])$ is so that $||\mu ||_{neg}=\kappa _{[E],[E]}$, it follows that $\mu $ is a negative measure with mass $-1$. Hence for all continuous functions $\varphi$
\begin{eqnarray*}
\Lambda ([E],[E])(\varphi )\geq \sup _{x_0\in E}(-\delta _{x_0}(\varphi ))=  \sup _{x_0\in E}(-\varphi (x_0)).
\end{eqnarray*}
On the other hand, $\Lambda ([E],[E])$ has support in $E$ and so
\begin{eqnarray*}
\Lambda ([E],[E])(\varphi )&=&\sup _{\mu \in  \mathcal{G}^*([E],[E]),||\mu ||_{neg}=-1}\mu (\varphi )\\
&=& \sup _{\mu \in  \mathcal{G}^*([E],[E]),||\mu ||_{neg}=-1}(-\mu ) (-\varphi )\\
&\leq&\sup _{\mu \in  \mathcal{G}^*([E],[E]),||\mu ||_{neg}=-1}\sup _{x_0\in E}(-\varphi (x_0))\\
&=&\sup _{x_0\in E}(-\varphi (x_0)).
\end{eqnarray*}
Here we use that for $\mu$ in the supremum, $-\mu$ is a positive measure of mass $1$. 
\end{proof}      
      
The above arguments and results in Chapter 3 in \cite{demailly1}  yield the following result. 
\begin{theorem}
Let $X$ be a compact K\"ahler manifold, $T_1,\ldots ,T_p$ positive closed $(1,1)$ currents and $R$ a positive closed $(k-p,k-p)$ current. 

1) If $\kappa (T_1,\ldots ,T_p,R)=0$, then the least negative intersection  $\Lambda (T_1,\ldots , T_p,R)$ is in $SM^+(X)$.

2) Let $E_i=\{x\in X:~\nu (T_i,x)>0\}$. Assume that for any $1\leq i_1< i_2< \ldots < i_q\leq p$, the $2p-2q+1$ Hausdorff dimension of $E_{i_1}\cap \ldots \cap E_{i_q}\cap sup (R)$ is $0$. Then $\kappa (T_1,\ldots ,T_p,R)=0$. 

3) Assume that $R=[W]$ is the current of integration on a subvariety $W\subset X$. Assume that for any $1< i_1< i_2< \ldots < i_q\leq p$, the intersection $E_{i_1}\cap \ldots \cap E_{i_q}\cap W$ does not contain any variety of dimension $>p-q$. Then $\kappa (T_1,\ldots ,T_p,R)=0$. 

4) Assume that $\kappa (T_1,\ldots ,T_p,R)=0$. Assume also that there is a Zariski open set $U\subset X$ on which the intersection $T_1|_U\wedge \ldots \wedge T_p|_U\wedge R|_U$ is classically defined, and whose mass is exactly $\{T_1\}\cdots \{T_p\}\cdot \{R\}$. Then $\Lambda (T_1,\ldots, T_p,R)$ is the extension by $0$ of the positive measure $T_1|_U\wedge \ldots \wedge T_p|_U\wedge R|_U$.
\label{TheoremPositiveWedgeIntersection}\end{theorem}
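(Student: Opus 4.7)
The plan is to address the four parts in order, with parts (2) and (3) being restatements of classical Demailly-type theorems and parts (1) and (4) following from the definition of $\Lambda$ and weak compactness of measures.

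For part (1), the key point is that $\kappa(T_1,\ldots,T_p,R)=0$ means there exist $\mu_n\in\mathcal{G}(T_1,\ldots,T_p,R)$ with $\|\mu_n\|_{neg}\to 0$. Writing $\mu_n=\mu_n^+-\mu_n^-$ with $\mu_n^-(1)=\|\mu_n\|_{neg}$, the constraint that each $\mu_n$ has net mass $\{T_1\}\cdots\{T_p\}\cdot\{R\}=:C$ forces $\mu_n^\pm(1)$ to be uniformly bounded. Passing to a subsequence, $\mu_n^\pm$ weakly converge to positive measures $\mu^\pm$; since $\mu_n^-(1)\to 0$, we obtain $\mu^-=0$, so $\mu^*:=\mu^+\in\mathcal{G}^*(T_1,\ldots,T_p,R)$ is a positive measure with $\|\mu^*\|_{neg}=0=\kappa$. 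The family of positive measures $\{\nu\in\mathcal{G}^*:\|\nu\|_{neg}=0\}$ is thus non-empty, and every such $\nu$ has mass $C$. By Theorem \ref{TheoremHahnBanach}(2), $\Lambda(T_1,\ldots,T_p,R)$ is a positive strong submeasure.

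For part (2), I would invoke the theorem of Demailly (Chapter 3 of \cite{demailly1}), which asserts that precisely under the given Hausdorff-dimension hypothesis on $E_{i_1}\cap\ldots\cap E_{i_q}\cap\mathrm{supp}(R)$, the intersection $T_1\wedge\ldots\wedge T_p\wedge R$ is classically defined by Bedford--Taylor-type monotone convergence, and since all inputs are positive the limit is a positive measure. Thus $(u_1,\ldots,u_p)\in\mathcal{E}(R)$ for the quasi-potentials; by Lemma \ref{LemmaGoodMonotoneConvergence}, every good monotone approximation yields the same positive-measure limit, so $\mathcal{G}(T_1,\ldots,T_p,R)$ is a singleton containing this classical intersection and $\kappa=0$. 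Part (3) then follows from (2) via Siu's theorem: each $E_{i,c}=\{\nu(T_i,\cdot)\geq c\}$ is analytic for $c>0$, so $E_i=\bigcup_{n\geq 1} E_{i,1/n}$ is a countable union of analytic varieties. The hypothesis of (3) forces every irreducible component of each $E_{i_1,c_1}\cap\ldots\cap E_{i_q,c_q}\cap W$ to have complex dimension $\leq p-q$, hence Hausdorff dimension $\leq 2(p-q)$; a countable union preserves the $\mathcal{H}^{2(p-q)+1}$-null property, verifying the hypothesis of (2).

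For part (4), apply part (1) to obtain a positive measure $\nu\in\mathcal{G}^*(T_1,\ldots,T_p,R)$ with $\|\nu\|_{neg}=0$, realized as a weak limit $\nu=\lim_k \mu_{n_k}$ of measures arising from smooth good monotone approximations. Restricting to $U$, the classical Bedford--Taylor convergence gives $\mu_{n_k}|_U\to T_1|_U\wedge\ldots\wedge T_p|_U\wedge R|_U=:\mu|_U$, so $\nu|_U=\mu|_U$. Since $\nu$ is a positive measure with total mass $\nu(1)=\lim\mu_{n_k}(1)=C$, and $\mu|_U$ already carries full mass $C$ on $U$ by assumption, positivity forces $\nu(X\setminus U)=0$, so $\nu$ is the extension by zero of $\mu|_U$. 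This identifies $\nu$ uniquely among positive measures in $\mathcal{G}^*$, hence the supremum defining $\Lambda(T_1,\ldots,T_p,R)$ collapses to this single positive measure, proving the claim.

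The main technical obstacle is part (2): I need Demailly's Hausdorff-dimension criterion in the form involving the extra positive closed current $R$ of bidimension $(p,p)$, whereas the original statement is for intersections against the ambient manifold. The adaptation amounts to replacing the ambient complex dimension $k$ by the effective dimension $p$ (the bidimension of $R$), i.e.\ slicing along $\mathrm{supp}(R)$; this is standard but should be cited precisely, since it is the single analytic input feeding into parts (3) and (4).
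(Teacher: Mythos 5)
Your parts (1) and (4) are correct and essentially the paper's own argument: in (1) you extract a positive measure in $\mathcal{G}^*(T_1,\ldots,T_p,R)$ as a weak limit of the positive parts when $\|\mu_n\|_{neg}\to 0$ and then apply Theorem \ref{TheoremHahnBanach}, and in (4) you combine the restriction-to-$U$ identity with the mass count $\nu(1)=\{T_1\}\cdots\{T_p\}\cdot\{R\}=(T_1|_U\wedge\ldots\wedge T_p|_U\wedge R|_U)(1)$ to force $\nu(X\setminus U)=0$ for every positive measure $\nu\in\mathcal{G}^*$, exactly as in the paper. Your reduction of (3) to (2) via Siu's decomposition and a Hausdorff-dimension count is a legitimate variant of the paper's direct treatment of (3), and would be fine once (2) is secured.

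The genuine gap is in part (2). Demailly's criterion in Chapter III of \cite{demailly1} (which is already stated for wedge products against a positive closed current of bidimension $(p,p)$, so the adaptation involving $R$ that you flag is not the real issue) requires the Hausdorff-dimension condition on the \emph{unbounded loci} $L(u_i)$ of the quasi-potentials, not on the positive-Lelong-number sets $E_i=\{x:\nu(T_i,x)>0\}$. One has $E_i\subset L(u_i)$ and the inclusion can be strict: a quasi-psh function can be unbounded on a large set while all its Lelong numbers vanish. Hence the hypothesis of part (2) does not place you in the scope of Demailly's theorem, and your claim that $T_1\wedge\ldots\wedge T_p\wedge R$ is classically defined (so that $\mathcal{G}$ is a singleton) does not follow. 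Only the weaker conclusion $\kappa(T_1,\ldots,T_p,R)=0$ is needed, and the paper obtains it by a different route: regularize each $T_i$ by currents $T_i^{(n)}$ with analytic singularities as in part 2) of Theorem \ref{TheoremDemaillyRegularisation}, so that $T_i^{(n)}+\epsilon_n\omega\geq 0$ with $\epsilon_n\to 0$ and the unbounded locus of $T_i^{(n)}$ (which for analytic singularities coincides with its positive-Lelong-number set) is contained in $E_i$; Demailly's criterion then applies to each approximating tuple, any cluster point of $T_1^{(n)}\wedge\ldots\wedge T_p^{(n)}\wedge R$ is a positive measure because $\epsilon_n\to 0$, and such cluster points lie in $\mathcal{G}(T_1,\ldots,T_p,R)$ by part 2) of Proposition \ref{PropositionPropertyCollectionG}. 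Substituting this argument for your direct appeal to classical definedness repairs part (2) and, with it, your derivation of part (3).
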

\begin{proof}
We prove for example 3) and 4). 

3) By part 2) of Theorem \ref{TheoremDemaillyRegularisation}, we have good monotone approximations $\{T_i^{(n)}\}$ of $T_i$ by currents with analytic singularities with the following properties: i) $T_i^{(n)}+\epsilon _n\omega \geq 0$ for all $n$ where $\epsilon _n\rightarrow 0$, and ii) $\{x\in X:~\nu (T_i^{(n)})>0\}\subset E_i$ for all $n$ and $i$. Then from  Chapter 3 in \cite{demailly1}, it follows from i) that for each $n$ the intersection $T_1^{(n)}\wedge \ldots \wedge T_p^{(n)}\wedge R$ is classically defined, and it follows from ii) that any cluster point of $T_1^{(n)}\wedge \ldots \wedge T_p^{(n)}\wedge R$ is a positive measure.  By part 2) of Proposition \ref{PropositionPropertyCollectionG}, any such limit measure is in $\mathcal{G}(T_1,\ldots , T_p,R)$. Thus $\kappa (T_1,\ldots ,T_p,R)=0$. 

4) Let $\mu $ be any signed measure in $\mathcal{G}(T_1,\ldots ,T_p,R)$. From the definition, we have that $\mu |_U=T_1|_U\wedge \ldots \wedge T_p|_U\wedge R|_U$ . It follows that the same is true also for $\mu \in \mathcal{G}^*(T_1,\ldots ,T_p,R)$. Since $\kappa (T_1,\ldots ,T_p,R)=0$, it follows that 
\begin{eqnarray*}
\Lambda (T_1,\ldots ,T_p,R)(\varphi )=\sup _{\mu \mbox{ positive measure }\in \mathcal{G}^*(T_1,\ldots ,T_p,R)}\mu (\varphi ). 
\end{eqnarray*}
Hence the claim follows if we can show that $\mathcal{G}^*(T_1,\ldots ,T_p,R)$ has only one measure. In fact, let $\mu \in \mathcal{G}^*(T_1,\ldots ,T_p,R)$ be a positive measure. Then $\mu$ has mass $\{T_1\}\cdots \{T_p\}\cdot \{R\}$, which is the same as that of $T_1|_U\wedge \ldots \wedge T_p|_U\wedge R|_U$, and by the above arguments $\mu |_U=T_1|_U\wedge \ldots \wedge T_p|_U\wedge R|_U$. Therefore, $\mu$ must be the extension by $0$ of $T_1|_U\wedge \ldots \wedge T_p|_U\wedge R|_U$.  
\end{proof} 

\subsection{The range of the Monge-Ampere operator $\Lambda (T,T,\ldots ,T)$}

Let $X$ be a compact K\"ahler manifold of dimension $k$. Here we discuss about the range of the Monge-Ampere operator $T\mapsto \Lambda (T,T,\ldots ,T)$, where $T$ is a positive closed $(1,1)$ current on $X$. We will give some evidence to that the positive strong submeasures in the range, while can be singular, cannot be too singular. 

First, we consider the case where $k=1$. In this case, the range is exactly the set of positive closed $(1,1)$ currents on $X$, and hence is exactly the set of measures on $X$. In particular, there is no positive closed $(1,1)$ current $T$ on $X$ so that $T=\sup _{x\in X}\delta _x$. On the other hand, any proper subvariety of $X$ is a finite number of points, say $Z=\{p_1,\ldots ,p_m\}$, and the positive closed $(1,1)$ current $T=\delta _{p_1}+\ldots +\delta _{p_m}$ satisfies $T\geq \sup _{x\in Z}\delta _x$.

When we move to dimension, we see already in Proposition \ref{PropositionIntersectionProjectiveSpace} that the range of the Monge-Ampere operators can be much more than just measures. In fact, the same proof as that of part 2 of  Proposition \ref{PropositionIntersectionProjectiveSpace} shows the following: 
\begin{lemma}
Let $D$ be an irreducible curve in $\mathbb{P}^2$, with degree $deg(D)$. Then $$\deg (D)\sup _{x\in D}\delta _x\leq \Lambda ([D],[D])\geq \deg (D)^2\sup _{x\in D}\delta _x.$$ 
\label{LemmaSelfIntersectionCurves}\end{lemma}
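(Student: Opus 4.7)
The plan is to treat the two inequalities separately, following the template of Proposition \ref{PropositionIntersectionProjectiveSpace}(2) but replacing the auxiliary line by a degree-$d$ divisor, where $d = \deg(D)$. Since $X = \mathbb{P}^2$, part 1 of Proposition \ref{PropositionIntersectionProjectiveSpace} gives $\kappa([D],[D]) = 0$, so $\Lambda([D],[D])$ is the supremum over positive measures $\mu \in \mathcal{G}^*([D],[D])$; each such $\mu$ has total mass $\{D\}\cdot\{D\} = d^2$ by Theorem \ref{TheoremLeastNegativeIntersection}(2), and is supported on $D$ by Theorem \ref{TheoremLeastNegativeIntersection}(3) because $[D] \wedge [D]$ vanishes classically on $X \setminus D$. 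For any $\varphi \in C^0(X)$ and any such $\mu$, positivity yields $\mu(\varphi) \leq d^2 \sup_D \varphi$, whence the upper bound $\Lambda([D],[D]) \leq d^2 \sup_{x \in D} \delta_x$.

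For the lower bound, fix $p \in D$ and choose a line $L \subset \mathbb{P}^2$ through $p$ with $L \neq D$ (the case $d=1$ is already covered by Proposition \ref{PropositionIntersectionProjectiveSpace}(2)). The effective divisor $dL$ is cohomologous to $D$, so one can write $d[L] = \Omega + dd^c u$ and $[D] = \Omega + dd^c w$ with a common smooth representative $\Omega$. Using Theorem \ref{TheoremDemaillyRegularisation} together with the $\mathbb{P}^2$-specific fact (invoked in the proof of Proposition \ref{PropositionIntersectionProjectiveSpace}(1)) that quasi-psh potentials of positive closed currents on $\mathbb{P}^2$ admit smooth good monotone approximations $\{v_n\}$ with $\Omega + dd^c v_n \geq 0$, pick such a sequence approximating $u$. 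Since $L \not\subset D$, the wedge $d[L] \wedge [D]$ is classically defined and equals $\sum_{q \in L \cap D} d\, m_q \delta_q$, where the $m_q \geq 1$ are the local intersection multiplicities and $\sum m_q = d$ by Bezout. By the definition of classical wedge intersection the smooth positive forms $(\Omega + dd^c v_n) \wedge [D]$ converge weakly to this measure, which therefore lies in $\mathcal{G}^*([D],[D])$ by Proposition \ref{PropositionPropertyCollectionG}(2). It dominates $d\delta_p$, giving $\Lambda([D],[D])(\varphi) \geq d\varphi(p)$ for every $\varphi \in C^0(X)$; taking the supremum over $p \in D$ yields $\Lambda([D],[D]) \geq d \sup_{x \in D} \delta_x$.

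The main technical point, exactly as in Proposition \ref{PropositionIntersectionProjectiveSpace}(2), is justifying $(\Omega + dd^c v_n) \wedge [D] \to d[L] \wedge [D]$ weakly. Away from $L$ this is immediate from the smoothness of $d[L]$ there; near a smooth point of $L \cap D$ it is the classical Bedford--Taylor convergence for currents with locally bounded potentials off a curve disjoint from $D$. Minor care is needed when $p$ is a singular point of $D$: either one chooses $L$ so that $L \cap \mathrm{Sing}(D) \subseteq \{p\}$ and computes the local intersection in coordinates, or one deduces the bound at such $p$ by continuity from nearby smooth points of $D$, using that the singular locus is nowhere dense in $D$ and that $\sup_D \varphi = \sup_{D_{\mathrm{sm}}} \varphi$ for any $\varphi \in C^0(X)$. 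No ingredient beyond Demailly's regularization and the formalism of Section \ref{SectionIntersection} is required.
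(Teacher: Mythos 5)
Your upper bound is fine and is exactly the paper's argument for a line, transported to degree $d$: on $\mathbb{P}^2$ one has $\kappa([D],[D])=0$, every competitor $\mu$ in the supremum defining $\Lambda([D],[D])$ is a positive measure of total mass $\{D\}\cdot\{D\}=d^2$ supported on $D$, hence $\mu (\varphi )\leq d^2\sup _D\varphi$. The construction in your lower bound is also the natural generalisation of the paper's (auxiliary line replaced by $dL$), and it does legitimately place $d[L]\wedge [D]=\sum _{q\in L\cap D}d\,m_q\delta _q$ in $\mathcal{G}^*([D],[D])$ --- although the correct reference is Example 3 (which puts $(\Omega +dd^cv_n)\wedge [D]$ in $\mathcal{G}([D],[D])$ for every smooth $v_n$) followed by passage to the weak closure, rather than Proposition \ref{PropositionPropertyCollectionG}(2): your $v_n$ are monotone approximations of the potential of $d[L]$, not of $[D]$, so that proposition does not apply as stated.

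The genuine gap is the final step ``it dominates $d\delta _p$, giving $\Lambda ([D],[D])(\varphi )\geq d\varphi (p)$ for every $\varphi \in C^0(X)$.'' The measure $\mu _L:=d[L]\wedge [D]$ does satisfy $\mu _L\geq d\delta _p$ as positive measures, but the residual $\mu _L-d\delta _p$ has total mass $d^2-d>0$ when $d\geq 2$, concentrated at the other points of $L\cap D$; consequently $\mu _L(\varphi )\geq d\varphi (p)$ only when $\varphi$ is nonnegative at those points, not for arbitrary continuous $\varphi$. This cannot be repaired by a cleverer auxiliary current: every positive measure in $\mathcal{G}^*([D],[D])$ has mass exactly $d^2$ and support in $D$, so taking $\varphi \equiv -1$ and using part 4) of Theorem \ref{TheoremLeastNegativeIntersection} gives $\Lambda ([D],[D])(-1)=-d^2<-d=d\sup _{x\in D}\delta _x(-1)$ for $d\geq 2$. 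Hence, with the paper's convention that $\mu _1\leq \mu _2$ means $\mu _1(\varphi )\leq \mu _2(\varphi )$ for all $\varphi \in C^0(X)$, the inequality $d\sup _{x\in D}\delta _x\leq \Lambda ([D],[D])$ you set out to prove is actually false for $d\geq 2$ (the displayed statement of the lemma is itself garbled, reading ``$\leq \ \geq$''). What your construction genuinely establishes is the pair of correct, weaker assertions: $\Lambda ([D],[D])\geq d[L]\wedge [D]$ for every line $L$, and $\Lambda ([D],[D])(\varphi )\geq d\sup _D\varphi$ for every $\varphi \geq 0$; any salvage of the lemma must be restated in one of these forms (or the comparison must be restricted to nonnegative test functions).
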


Now let $T$ be any positive closed $(1,1)$ current on $\mathbb{P}^2$. By Siu's decomposition theorem, we can write it uniquely as
\begin{eqnarray*}
T=T_0+\sum _na_n[D_n],
\end{eqnarray*}
where $T_0$ is a positive closed $(1,1)$ current with no mass on any curve, and $D_n$ are curves, so that $\sum _na_n \deg (D_n)<\infty$. The proof of Proposition \ref{PropositionIntersectionProjectiveSpace} shows that for any two positive closed $(1,1)$ currents $T_1,T_2$ on $\mathbb{P}^2$ we have $\kappa _{T_1,T_2}\geq 0$. Therefore, by Theorem \ref{TheoremLeastNegativeIntersectionShortVersion}, we have
\begin{eqnarray*}
\Lambda (T,T)&\geq& \Lambda (T_0,T_0)+\sum _{n}a_n^2\Lambda ([D_n],[D_n])+  2\sum _{n}a_n \Lambda (T_0,[D_n])+\sum _{n\not =m}a_na_m\Lambda ([D_n],[D_m])\\
&\geq&\sum _{n}a_n^2\deg (D_n)\sup _{x\in D_n}\delta _x.
\end{eqnarray*}
If we choose the curves $\{D_n\}$ so that $\cup _nD_n$ is dense in $\mathbb{P}^2$, we see that the Monge-Ampere $\Lambda (T,T)$ can be very singular. However, the above estimate cannot imply that $\Lambda (T,T)\geq \sup _{x\in A}\delta _x$ for some subset $A$ not contained in any proper subvariety of $\mathbb{P}^2$.  In connection to the case of dimension $1$ above, we conjecture that in fact this speculation is true. 

We can work similarly on higher dimensional manifolds $X$. This leads us to state the following conjecture. The analysis in the case of dimension 1, and also that on $\mathbb{P}^2$ and $\mathbb{P}^k$, shows that this conjecture is sharp. 

{\bf Conjecture A.} Let $X$ be a compact K\"ahler manifold of dimension $k$. If there is a positive closed $(1,1)$ current $T$ on $X$ and a subset $A\subset X$ so that $\Lambda (T_1=T,T_2=T,\ldots ,T_k=T)\geq \sup _{x\in A}\delta _x$, then there is a proper subvariety $Z\subset X$ containing $A$. In particular, there is no positive closed $(1,1)$ current $T$ on $X$  so that $\Lambda (T_1=T,T_2=T,\ldots ,T_k=T)\geq \sup _{x\in X}\delta _x$. 

We also state the following conjecture. Conjecture B is trivial in the case $\mu _1$ is a positive measure, since in that case we have $\mu _1=\mu _2$.  In particular, it is true in dimension $1$. The analysis in the case of dimension $1$ shows that we cannot get rid of the assumption that $||\mu _1||=||\mu _2||$ in the Conjecture B. 

{\bf Conjecture B.} Let $X$ be a compact K\"ahler manifold, and let $\mu _1,\mu _2$ be two positive strong submeasures on $X$ so that $||\mu _1||=||\mu _2||$ and $\mu _1\geq \mu _2$. Assume that there is a positive closed $(1,1)$ current $T_1$ on $X$ so that $\Lambda (T_1,T_1,\ldots ,T_1)=\mu _1$. Then there is a positive closed $(1,1)$ current $T_2$ on $X$ so that $\Lambda (T_2,T_2,\ldots ,T_2)=\mu _2$.  

A corollary of Conjecture B is the following. Let $\mu$ be a positive strong submeasure on $X$ so that there exists a positive closed $(1,1)$ current $T$ for which $\Lambda (T,T,\ldots ,T)=\mu$. Let $\nu$ be a positive measure so that $\mu \geq \nu $ and $|||\nu ||=||\mu ||$. Then there exists a positive closed $(1,1)$ current $T'$ so that $\Lambda (T',T',\ldots ,T')=\nu$. 

\section{Invariant positive strong submeasures and Entropy}\label{SectionVariationalPrinciple}

In this section we explore invariant positive strong submeasures of meromorphic maps and their entropies. We start with the case of continuous maps of compact Hausdorff spaces to provide some intuitions and ideas. 

\subsection{The case of continuous maps on compact Hausdorff spaces}
 
 We first recall relevant definitions about entropy of an invariant measure and the Variational Principle, see \cite{goodwyn, goodman}. Let $X$ be a compact Hausdorff space and $f:X\rightarrow X$ a continuous map. Let $\mu$ be a probability Borel-measure on $X$ which is invariant by $f$, that is $f_*(\mu )=\mu$. We next define the entropy $h_{\mu}(f)$ of $f$ with respect to $\mu$. We say that a finite collection of Borel sets $\alpha $ is a $\mu$-partition if $\mu (\bigcup _{A\in \alpha }A)=1$ and $\mu (A\cap B)=0$ whenever $A,B\in \alpha$ and $A\not= B$. Given a $\mu$-partition $\alpha$, we define
 \begin{eqnarray*}
 H_{\mu}(\alpha )=-\sum _{A\in \alpha}\mu (A)\log \mu (A). 
 \end{eqnarray*}
 If $\alpha$ and $\beta $ are $\mu$-partitions, then $\alpha V \beta :=\{A\cap B:$ $A\in \alpha$ and $B\in \beta \}$ is also a $\mu$-partition. Similarly, for all $n$ the collection $V_{i=0}^{n-1}f^{-(i)}(\alpha )$ $:=\{A_0\cap A_1\cap \ldots \cap A_{n-1}:$ $A_0\in \alpha ,$ $A_1\in f^{-1}(\alpha ),$ $\ldots ,$ $A_{n-1}\in f^{-(n-1)}(\alpha )\}$ is also a $\mu $-partition. We define: 
 \begin{equation}
 h_{\mu }(f,\alpha )=\lim _{n\rightarrow\infty}\frac{1}{n}H_{\mu}(V_{i=0}^{n-1}f^{-(i)}(\alpha ))=\inf _n\frac{1}{n}H_{\mu}(V_{i=0}^{n-1}f^{-(i)}(\alpha )),
 \label{EquationMeasureEntropy}\end{equation} 
(the above limit always exists) and 

$h_{\mu}(f):=$ $\sup \{h_{\mu (\alpha )}:$ $\alpha$ runs all over $\mu$-partitions$\}$. 

Recall (see \cite{rudin}) that a Borel-measure $\mu$ of finite mass is regular if for every Borel set $E$: i) $\mu (E)=\inf \{\mu (V):$ $V$ open, $E\subset V\}$, and ii) $\mu (E)=\sup \{\mu (K):$ $K$ compact, $K\subset E\}$. Note that if $X$ is a compact metric space, then any Borel measure of finite mass is regular.   

The Variational Principle, an important result on dynamics of continuous maps, is as follows \cite{goodwyn, goodman}.
\begin{theorem}
Let $X$ be a compact Hausdorff space and $f:X\rightarrow X$ a continuous map. Then 

$h_{top}(f)=$ $\sup \{h_{\mu }(f):$ $\mu$ runs all over probability regular Borel measures $\mu$ invariant by $f\}$. 
\label{TheoremVariationalPrincipleMeasure}\end{theorem}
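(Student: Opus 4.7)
The plan is to prove the two inequalities separately, following the classical route due to Goodwyn (for one direction) and Goodman/Misiurewicz (for the other).

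For the easy direction, $h_{\mu}(f) \leq h_{top}(f)$ for every invariant regular probability measure $\mu$, I would fix such a $\mu$ together with a finite Borel $\mu$-partition $\alpha$. Using outer regularity I can approximate each atom $A\in\alpha$ from outside by open sets, and by inner regularity approximate from inside by compact sets, obtaining a new partition $\beta$ of compacts whose boundary has arbitrarily small $\mu$-measure and with $H_{\mu}(\alpha \mid \beta)$ as small as desired. The Rokhlin-type inequality $h_{\mu}(f,\alpha) \leq h_{\mu}(f,\beta) + H_{\mu}(\alpha \mid \beta)$ then reduces the task to estimating $h_{\mu}(f,\beta)$ for partitions by compact sets. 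Since the elements of $\bigvee_{i=0}^{n-1} f^{-i}\beta$ having positive $\mu$-measure all intersect a fixed $(n,\epsilon)$-spanning set (for $\epsilon$ smaller than a Lebesgue number of $\beta$), the number of such elements is at most the spanning number, and applying $H_{\mu}(\gamma) \leq \log |\gamma|$ gives the bound $h_{\mu}(f,\beta)\leq h_{top}(f)$; taking the supremum in $\alpha$ gives Goodwyn's inequality.

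For the harder direction, I would use the Misiurewicz construction. Fix $\epsilon>0$ and, for each $n$, choose a maximal $(n,\epsilon)$-separated set $E_n$ with cardinality $s_n(\epsilon)$ approximating the topological entropy: $\limsup_n \tfrac{1}{n}\log s_n(\epsilon)\geq h_{top}(f)-o(1)$ as $\epsilon\to 0$. Define the atomic measure $\sigma_n = \frac{1}{s_n(\epsilon)} \sum_{x\in E_n}\delta_x$ and its Cesaro average $\mu_n = \frac{1}{n}\sum_{i=0}^{n-1} f^i_* \sigma_n$. After extracting a weakly convergent subsequence, the limit $\mu$ is an invariant regular probability measure. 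Now choose a finite partition $\alpha$ of diameter less than $\epsilon/2$ whose atoms have $\mu$-measure-zero boundary (possible by regularity plus a covering-and-shrinking argument analogous to the one at the end of the proof of Theorem~\ref{TheoremSubmeasureUpperSemicontinuous}). Separation of $E_n$ by the dynamics implies that distinct points of $E_n$ lie in distinct atoms of $\bigvee_{i=0}^{n-1}f^{-i}\alpha$, so $H_{\sigma_n}\!\bigl(\bigvee_{i=0}^{n-1}f^{-i}\alpha\bigr) = \log s_n(\epsilon)$. A standard concavity/averaging argument converts this estimate on $\sigma_n$ into a lower bound for $\frac{1}{q} H_{\mu_n}\!\bigl(\bigvee_{j=0}^{q-1}f^{-j}\alpha\bigr)$ up to error $O(q/n)$, and passing $n\to\infty$ (using the $\mu$-null boundary to get continuity of $\mu\mapsto H_\mu(\alpha)$) then $q\to\infty$ yields $h_{\mu}(f,\alpha)\geq h_{top}(f) - o(1)$ as $\epsilon\to 0$.

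The main obstacle is the second step, specifically the selection of a partition whose boundary is $\mu$-null for the measure $\mu$ built from the limiting procedure: one does not know $\mu$ in advance, so the partition must be chosen together with the subsequence. The standard fix is to first pick a countable collection of candidate partitions (e.g.\ obtained by varying radii of a countable dense set of balls), diagonalize to produce a subsequence along which all the boundary measures converge, and then for each $\epsilon$ select a partition whose limit-boundary-measure is zero. This combinatorial/measure-theoretic refinement is the only subtle point; everything else is bookkeeping with the definitions recalled in \eqref{EquationMeasureEntropy} and the properties of topological entropy.
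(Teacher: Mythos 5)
The paper does not prove this statement at all: Theorem \ref{TheoremVariationalPrincipleMeasure} is quoted as a known classical result and attributed to \cite{goodwyn, goodman}, so there is no internal argument to compare yours against. Your proposal is the standard modern proof (Goodwyn's inequality for $h_\mu(f)\le h_{top}(f)$, Misiurewicz's construction for the reverse), and as a proof for \emph{compact metric} spaces it is correct in all essentials, including your identification of the genuinely delicate point --- choosing the partition with $\mu$-null boundaries before $\mu$ is known, resolved by diagonalizing over a countable family of candidate partitions.

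The one real gap is that the theorem is stated for compact \emph{Hausdorff} spaces, and every key device in your argument is metric-dependent: $(n,\epsilon)$-spanning and $(n,\epsilon)$-separated sets, Lebesgue numbers, partitions of small diameter, and a countable dense family of balls all presuppose a metric (and in the last case separability). In the Hausdorff setting the topological entropy must be taken in the open-cover sense of Adler--Konheim--McAndrew (which is the definition the paper cites), and the Bowen--Dinaburg quantities you use are not even defined; one must either run the original Goodwyn/Goodman open-cover arguments or reduce to metrizable factors of $(X,f)$. Since every space to which the paper actually applies this theorem ($\Gamma_{f,\infty}\subset X^{\mathbb N}$ for $X$ a compact complex variety) is a compact metric space, this does not affect the paper's use of the result, but your proof as written does not establish the statement in the generality in which it is asserted.
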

It is known, however, that the supremum in the theorem may not be attained by any such invariant measure $\mu$, even if $X$ is a compact metric space. In contrast, here, we show that with an appropriate definition of entropy, positive strong submeasures also fit naturally with the Variational Principle and provide the desired maximum. 

First, we note that even though we defined in previous sections strong submeasures only for compact metric spaces, this definition extends easily to the case of compact Hausdorff spaces. We still have the Hahn-Banach theorem on compact Hausdorff spaces, and hence we can define a strong submeasure $\mu$ by one of the following two equivalent definitions: i) $\mu$ is a bounded and sublinear operator on $C^0(X)$, or ii) $\mu =\sup _{\nu \in \mathcal{G}}\nu$, where $\mathcal{G}$ is a non-empty collection of signed regular Borel-measures on $X$ whose norms are uniformly bounded from above. Such a strong submeasure $\mu$ is positive if moreover $\mathcal{G}$ in ii) can be chosen to consist of only positive measures. 

As in the previous sections, we can then define the pushforward of $\mu$ by a continuous map $f:X\rightarrow X$. We have the following property, which is stronger than 7) of Theorem \ref{TheoremSubmeasurePushforwardMeromorphic}. The proof of the result is similar to, and simpler than that of Theorem \ref{TheoremSubmeasurePushforwardMeromorphic}, and hence is omitted. 
\begin{lemma}
Let $X$ be a compact Hausdorff space and $f:X\rightarrow X$ a continuous map. Let $\mu$ be a positive strong submeasure on $X$, and assume that $\mathcal{G}$ is any non-empty collection of positive measures on $X$ such that  $\mu =\sup _{\nu \in \mathcal{G}}\nu$. Then
\begin{eqnarray*}
f_*(\mu )=\sup _{\nu \in \mathcal{G}}f_*(\nu ). 
\end{eqnarray*}
\label{LemmaPushforwardStrongSubmeasure}\end{lemma}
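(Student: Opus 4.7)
The plan is to reduce the identity to testing on continuous functions and to exploit the key fact that, because $f$ is continuous, the pullback $f^{*}\varphi = \varphi\circ f$ of $\varphi\in C^{0}(X)$ is already in $C^{0}(X)$; no upper-semicontinuous extension (and hence no infimum over continuous majorants) is needed in the definition of $f_{*}$. Concretely, for any positive strong submeasure $\nu$ on $X$ and any $\varphi\in C^{0}(X)$ the definition specialises to
\begin{equation*}
f_{*}(\nu)(\varphi) = \nu(\varphi\circ f),
\end{equation*}
which is simultaneously valid for measures $\nu$ and for the submeasure $\mu$.

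Given this, I would simply apply the defining equality $\mu = \sup_{\nu\in\mathcal{G}}\nu$ (as functionals on $C^{0}(X)$) to the particular continuous test function $\psi = \varphi\circ f$. This yields
\begin{equation*}
f_{*}(\mu)(\varphi) \;=\; \mu(\varphi\circ f) \;=\; \sup_{\nu\in\mathcal{G}}\nu(\varphi\circ f) \;=\; \sup_{\nu\in\mathcal{G}}f_{*}(\nu)(\varphi).
\end{equation*}
Since this holds for every $\varphi\in C^{0}(X)$ and both sides are strong submeasures of uniformly bounded norm (each $f_{*}(\nu)$ has mass $\nu(1)\le \mu(1) = \|\mu\|$, so the supremum on the right is a well-defined positive strong submeasure by part~1 of Theorem~\ref{TheoremSubmeasureBasicProperty}), the equality of functionals gives the claimed equality of strong submeasures.

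There is essentially no technical obstacle; the main conceptual point is to contrast this with the meromorphic setting, where the analogous statement fails for a general $\mathcal{G}$ (cf.\ Example~2). The failure there comes precisely from the fact that the upper-semicontinuous pullback $(\pi_{X,f})_{*}\pi_{Y,f}^{*}(\varphi)$ must be approximated from above by continuous functions via an infimum, and such an infimum does not commute with the supremum over $\mathcal{G}$ in general; this is why part~6 of Theorem~\ref{TheoremSubmeasurePushforwardMeromorphic} only gives the result for the maximal collection $\mathcal{G}(\mu)$. The continuity of $f$ eliminates this asymmetry entirely, which is precisely why the stronger statement holds here.
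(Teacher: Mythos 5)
Your proof is correct: for a continuous $f$ the pullback $\varphi\circ f$ is itself continuous, so by positivity of $\mu$ the infimum in the definition of $f_*$ collapses to $f_*(\mu)(\varphi)=\mu(\varphi\circ f)$, and evaluating $\mu=\sup_{\nu\in\mathcal{G}}\nu$ at the test function $\varphi\circ f$ gives the claim. The paper omits its proof, describing it only as a simpler version of the argument for Theorem \ref{TheoremSubmeasurePushforwardMeromorphic}, and your argument is exactly that intended simplification, including the correct diagnosis of why the infimum over continuous majorants is the obstruction in the meromorphic case.
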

 
Now we define an appropriate notion of entropy for a positive strong submeasure $\mu$ which is invariant by $f$. A first try would be to naively  adapt (\ref{EquationMeasureEntropy}) to the more general case of positive strong submeasures, and then proceed as before. However, this is not appropriate, as the readers can readily check with the simplest case of identity maps on spaces with infinitely many points. In this case, there are many positive strong submeasures with mass $1$ and invariant by $f$, whose entropy, according to the above definition, can be as large as desired and even can be infinity. On the other hand, recall that the topological entropy of the identity map is $0$. (A refinement of this naive version turns out to be appropriate, see the remarks at the end of this section.)  

We instead proceed as follows. Given $\mu$ a positive strong submeasure invariant by $f$ and any regular measure $\nu \leq \mu$, there is a regular measure $\nu '$ so that $\nu '$ is invariant by $f$, $\nu '\leq \mu$ and $\nu '$ has the same mass as $\nu$. Such a measure $\nu '$ can be constructed as any cluster point of the sequence 
\begin{eqnarray*}
\frac{1}{n}\sum _{i=0}^{n-1}f^i_*(\nu ). 
\end{eqnarray*}
We define $\mathcal{G}(f,\mu )=\{\nu :$ $\nu $ is a regular Borel-measure invariant by $f$, and $\nu\leq \mu\}$. Finally, we define the desired entropy as follows:
\begin{equation}
h_{\mu}(f):=\sup _{\nu \in \mathcal{G}(f,\mu )}h_{\nu}(f).
\label{EquationEntropySubmeasure}\end{equation}

We now can prove the Variational Principle for positive strong submeasures. We first show that if $\mu$ is any positive strong submeasure of mass $1$ and invariant by $f$, then $h_{\mu }(f)\leq h_{top}(f)$. To this end, we need only to observe that for any $\nu \in \mathcal{G}(f,\mu )$, then the mass of $\nu $ is $\leq 1$, and hence $h_{\nu }(f)\leq h_{top}(f)$ by Theorem \ref{TheoremVariationalPrincipleMeasure}. 

We finish the proof by showing that there is a positive strong submeasure $\mu$ of mass $\leq 1$ and invariant by $f$ so that $h_{\mu }(f)=h_{top}(f)$. To this end, we let $\mathcal{G}=\{\nu :$ $\nu$ is a regular measure of mass $\leq 1$ and is invariant by $f\}$. This set is non-empty (it contains at least the measure zero.)  We define $\mu =\sup _{\nu \in \mathcal{G}}\nu$. By Lemma \ref{LemmaPushforwardStrongSubmeasure}, we have that 
\begin{eqnarray*}
f_*(\mu )=\sup _{\nu \in \mathcal{G}}f_*(\nu )=\sup _{\nu}\nu =\mu.
\end{eqnarray*}
Hence, $\mu$ is invariant by $f$. Moreover, from the definition of $\mathcal{G}$, it follows that $\mu$ has mass $\leq 1$. Finally, by (\ref{EquationEntropySubmeasure}) and Theorem \ref{TheoremVariationalPrincipleMeasure} we have that $h_{\mu }(f)=h_{top}(f)$. 
 
{\bf Remarks.} Besides fitting naturally with the Variational Principle, definition (\ref{EquationEntropySubmeasure}) is also compatible with the philosophy that a property of a positive strong submeasure $\mu$ should be related to the supremum of the same property of measures $\nu \leq \mu$. We have seen some instances of this philosophy above: the definition of $\mu$ itself is such a supremum, and Lemma \ref{LemmaPushforwardStrongSubmeasure}. Since entropy is related to invariant measures of $f$, in definition (\ref{EquationEntropySubmeasure}) we have restricted to only $\mathcal{G}(f,\mu )$.

While we always have that for a positive strong submeasure $\mu$ invariant by a continuous map $f$ then $\mu \geq \sup _{\nu \in \mathcal{G}(f,\mu)}\nu$, it is not always true that $\mu =\sup _{\nu \in \mathcal{G}(f,\mu )}\nu$. In fact, assume that $f$ is not the identity map and $X$ has at least $2$ elements. Let $\mu =\sup _{x\in X}\delta _x$. Then it is easy to check that $\mu$ has mass $1$ and is $f$-invariant. But it is not true that $\mu =\sup _{\nu \in \mathcal{G}(f,\mu )}\nu$. In fact, assume otherwise that $\mu =\sup _{\nu \in \mathcal{G}(f,\mu )}\nu$. Let $x_0\in X$ be such that $f(x_0)\not= x_0$.  Then there is a sequence $\nu _n\in \mathcal{G}(f,\mu )$ so that $\nu _n(x_0)\rightarrow 1$. Hence, since $\nu _n$ all has mass $\leq 1$, it follows that $\nu _n$ converges to $\delta _{x_0}$. Since $f_*(\nu _n)=\nu _n$ for all $n$, it follows also that $f_*(\delta _{x_0})=\delta _0$. This, in turn, means that $f(x_0)=x_0$, which is  a contradiction.  

While the naive version discussed in the paragraph after Lemma \ref{LemmaPushforwardStrongSubmeasure} of the entropy $h_{\mu}(f,\alpha )$ is not a good one, here we present a refinement which is compatible to  (\ref{EquationEntropySubmeasure}) and hence the Variational Principle. Define $\mathcal{P}$ the set of all finite collections $\alpha$ of Borel sets of $X$. For $\nu \in \mathcal{G}(f,\mu )$, if $\alpha$ is a $\nu$-partition, then we define $\widetilde{h}_{\nu }(f,\alpha )=h_{\nu}(f,\alpha )$ as in (\ref{EquationMeasureEntropy}), otherwise we define $\widetilde{h}_{\nu }(f,\alpha )=0$. By (\ref{EquationEntropySubmeasure}), we have
\begin{eqnarray*}
h_{\mu}(f)&=&\sup _{\nu \in \mathcal{G}(f,\mu )}h_{\nu }(f)\\
&=&\sup _{\nu \in \mathcal{G}(f,\mu )}\sup _{\alpha \in \mathcal{P}}\widetilde{h}_{\nu }(f,\alpha )\\
&=&\sup _{\alpha \in \mathcal{P}}[\sup _{\nu \in \mathcal{G}(f,\mu )}\widetilde{h}_{\nu }(f,\alpha )].
\end{eqnarray*}
This suggests us, in parallel with (\ref{EquationMeasureEntropy}), to define for any finite collection of Borel sets $\alpha$, the quantity 
\begin{eqnarray*}
h_{\mu}(f,\alpha )=\sup _{\nu \in \mathcal{G}(f,\mu )}\widetilde{h}_{\nu }(f,\alpha ).
\end{eqnarray*}
Then, we have, as in the classical case: $h_{\mu}(f)=\sup _{\alpha \in \mathcal{P}}h_{\mu}(f,\alpha )$. 

\subsection{The case of meromorphic maps on compact complex varieties}
We now prove results concerning invariant positive strong submeasures of meromorphic maps $f:X\dashrightarrow X$  of compact complex varieties and their entropies. 

\begin{proof}[Proof of Proposition \ref{PropositionKeyEntropy}] By part ii of Theorem \ref{TheoremSubmeasurePushforwardMeromorphicShortVersion} and the fact that $\phi _f$ and $\pi _1$ are continuous maps, we need to consider only the case where $\hat{\mu}$ is a positive measure. We need to show that for all $\varphi \in C^0(X)$ 
\begin{eqnarray*}
f_* (\pi _1)_*(\hat{\mu })(\varphi )\geq (\pi _1)_*(\phi _f)_*(\hat{\mu} )(\varphi ). 
\end{eqnarray*}
By definition 
\begin{eqnarray*}
f_* (\pi _1)_*(\hat{\mu} )(\varphi )=\inf _{\psi \in C^0(X,\geq f^*(\varphi ))}(\pi _1)_*(\hat{\mu} )(\psi ) = \inf _{\psi \in C^0(X,\geq f^*(\varphi ))} \hat{\mu} (\pi _1^*(\psi )). 
\end{eqnarray*}
Let $U=X\backslash I(f)$. Then $U$ is an open dense subset of $X$ and $\pi _1^{-1}(U)$ is an open dense subset of $\Gamma _{f,\infty}$. Note that $\pi _1^*\circ f^*(\varphi )=\phi _f^*\pi _1^*(\varphi )$ on $\pi _1^{-1}(U)$. Moreover, note that the function $\phi _f^*\pi _1^*(\varphi )$ is continuous on the whole $\Gamma _{f,\infty}$. Therefore, for every $\psi \in C^0(X,\geq f^*(\varphi ))$, we have that $\pi _1^*(\psi )\geq \phi _f^*\pi _1^*(\varphi )$. This implies that
\begin{eqnarray*}
f_* (\pi _1)_*(\hat{\mu} )(\varphi )\geq  \hat{\mu} (\phi _f^*\pi _1^*(\varphi ))=(\pi _1)_*(\phi _f)_*(\hat{\mu} )(\varphi ). 
\end{eqnarray*}

If we choose an example $f:X\dashrightarrow X$ having a point $x\in I(f)$ such that $f_*(\delta _x)$ is not a measure, then for any $\hat{x}\in \Gamma _{f,\infty}$  so that $\pi _1(\hat{x})=x$ and $\hat{\mu}=\delta _{\hat{x}}$, we have $f_* (\pi _1)_*(\hat{\mu })\not= (\pi _1)_*(\phi _f)_*(\hat{\mu} )$, since the LHS is not a measure while the RHS is a measure. 
\end{proof}

\begin{theorem} Let $f:X\dashrightarrow X$ be a dominant meromorphic map of a compact complex variety. Let $0\not= \mu _0\in SM^+(X)$.

1) Let $\hat{\mu}$ be a $\phi _f$-invariant positive strong submeasure. Then $\mu =(\pi _1)_*(\hat{\mu})$ satisfies: $f_*(\mu )\geq \mu$.
 
2) Let $\mu _0$ be a positive strong submeasure on $X$, and $\mu$ is a cluster point of Cesaro's averages $\frac{1}{n}\sum _{j=0}^n(f_*)^j(\mu _0)$. Then $f_*(\mu )\geq \mu$.

3) If $f_*(\mu _0)=\mu _0$, then there exists a non-zero measure $\hat{\mu _0}$ on $\Gamma _{f,\infty}$ so that $(\phi _f)_*(\hat{\mu _0})=\hat{\mu _0}$ and $(\pi _1)_*(\hat{\mu _0})\leq \mu _0$. Moreover, the set $\{\hat{\mu} \in SM^+(\Gamma _{f,\infty}):~(\pi _1)_*(\hat{\mu})\leq \mu ,~(\phi _f)_*(\hat{\mu} )= \hat{\mu}\}$ has a maximum, denoted by $Inv(\pi _1, \mu)$. 

4) If $f_*(\mu _0)\leq \mu _0$, then the set  $\{\mu \in SM^+(X):~ \mu \leq \mu _0,~f_*(\mu )= \mu \}$ is non-empty and has a largest element, denoted by $Inv(\leq \mu _0)$. Moreover, $Inv(\leq \mu _0)=\lim _{n\rightarrow\infty}(f_*)^n(\mu _0)$. 

5) If $f_*(\mu _0)\geq \mu _0$, then the set $\{\mu \in SM^+(X):~ \mu \geq \mu _0,~f_*(\mu )= \mu \}$ is non-empty and has a smallest element, denoted by $Inv(\geq \mu _0)$. 
\label{TheoremInvariantMeasures}\end{theorem}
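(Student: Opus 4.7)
The plan is to handle the five parts roughly in order of increasing difficulty, with Proposition \ref{PropositionKeyEntropy}, the monotonicity and sub-linearity of $f_*$ on $SM^+(X)$, and the semi-continuity from part 4 of Theorem \ref{TheoremSubmeasurePushforwardMeromorphic} serving as the workhorses throughout. Part 1 is immediate from Proposition \ref{PropositionKeyEntropy}: if $(\phi_f)_*\hat{\mu} = \hat{\mu}$, then $f_*(\pi_1)_*\hat{\mu} \geq (\pi_1)_*(\phi_f)_*\hat{\mu} = (\pi_1)_*\hat{\mu}$. Part 2 is a short superadditivity calculation on the Cesaro averages $A_n = \frac{1}{n}\sum_{j=0}^{n-1}(f_*)^j\mu_0$: superadditivity of $f_*$ (part 7 of Theorem \ref{TheoremSubmeasurePushforwardMeromorphic}) gives, pointwise on $C^0(X)$, the inequality $f_*(A_n)(\varphi) \geq A_n(\varphi) + \frac{1}{n}((f_*)^n\mu_0(\varphi) - \mu_0(\varphi))$, and the remainder is $O(1/n)$ uniformly in $\|\varphi\|_{L^\infty}$ since $\|(f_*)^n\mu_0\| = \|\mu_0\|$. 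Picking a weak cluster point $A_{n_k} \rightharpoonup \mu$ and, after extracting further, $f_*(A_{n_k}) \rightharpoonup \nu$ (weak compactness, Theorem \ref{TheoremSubmeasureBasicProperty}), the inequality passes to the limit to give $\nu \geq \mu$, while part 4 of Theorem \ref{TheoremSubmeasurePushforwardMeromorphic} gives $\nu \leq f_*(\mu)$, yielding $f_*(\mu) \geq \mu$.

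Part 3 is the main technical step. First fix a non-zero positive measure $\chi_0 \leq \mu_0$, which exists by Theorem \ref{TheoremHahnBanach} since $\mu_0 \neq 0$. Because $\pi_1 : \Gamma_{f,\infty} \to X$ is a continuous surjection of compact metric spaces, a standard Hahn--Banach extension argument lifts $\chi_0$ to a positive measure $\hat{\chi}_0$ on $\Gamma_{f,\infty}$ with $(\pi_1)_*\hat{\chi}_0 = \chi_0$. Induction on $j$, using Proposition \ref{PropositionKeyEntropy}, monotonicity of $f_*$, and $f_*\mu_0 = \mu_0$, then yields $(\pi_1)_*(\phi_f)^j_*\hat{\chi}_0 \leq \mu_0$ for every $j$. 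The Cesaro averages $\hat{A}_n = \frac{1}{n}\sum_{j=0}^{n-1}(\phi_f)^j_*\hat{\chi}_0$ are positive measures on the compact metric space $\Gamma_{f,\infty}$ of constant mass $\chi_0(1) > 0$, and averaging the bounds gives $(\pi_1)_*\hat{A}_n \leq \mu_0$. Any weak-$*$ cluster point $\hat{\mu}_0$ is then a non-zero $\phi_f$-invariant measure (by the standard Krylov--Bogolyubov telescoping) with $(\pi_1)_*\hat{\mu}_0 \leq \mu_0$. For the maximum, the set $S$ in the statement has uniformly bounded norms ($\hat{\mu}(1) \leq \mu_0(1)$), so its pointwise supremum is a positive strong submeasure by Theorem \ref{TheoremSubmeasureBasicProperty}; since $\pi_1$ and $\phi_f$ are continuous, pushforward commutes with arbitrary suprema of positive measures (Lemma \ref{LemmaPushforwardStrongSubmeasure}), so this supremum lies in $S$ and is the maximum.

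Part 4 is straightforward: monotonicity of $f_*$ gives that $(f_*)^n\mu_0$ is a decreasing sequence in $SM^+(X)$, its pointwise limit $\mathrm{Inv}(\leq \mu_0)$ is a positive strong submeasure (sub-linearity passes through decreasing limits), and swapping the two infima in $f_*(\mathrm{Inv}(\leq \mu_0))(\varphi) = \inf_{\psi \geq f^*\varphi}\inf_n (f_*)^n\mu_0(\psi)$ gives $f_*\mathrm{Inv}(\leq \mu_0) = \inf_n (f_*)^{n+1}\mu_0 = \mathrm{Inv}(\leq \mu_0)$; maximality is immediate from monotonicity. Part 5 is the dual statement but substantially harder, because the increasing sequence $(f_*)^n\mu_0$ produces a pointwise supremum $\mu_\infty$ satisfying only $f_*\mu_\infty \geq \mu_\infty$ (the semi-continuity in part 4 of Theorem \ref{TheoremSubmeasurePushforwardMeromorphic} is the wrong direction to give equality). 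I plan to resolve this by transfinite iteration: set $\mu'_0 = \mu_0$, $\mu'_{\alpha+1} = f_*\mu'_\alpha$, and take pointwise suprema at limit ordinals. Using separability of $C^0(X)$, Lipschitz continuity of strong submeasures, and the uniform norm bound $\|\mu'_\alpha\| \leq \|\mu_0\|$, this non-decreasing transfinite chain must stabilize at some countable ordinal $\alpha^*$, at which $f_*\mu'_{\alpha^*} = \mu'_{\alpha^*}$. Minimality follows by transfinite induction: any invariant $\nu \geq \mu_0$ dominates every $\mu'_\alpha$, hence $\mu'_{\alpha^*}$.

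The principal obstacle throughout is the failure of $f_*$ to be additive or weakly continuous on $SM^+(X)$ for meromorphic $f$: the classical equalities of continuous dynamics become only inequalities, and the main work is converting these into identities by extracting maximal or minimal elements, culminating in part 5 with the need for a transfinite rather than sequential iteration.
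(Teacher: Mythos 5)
Your parts 1)--4) track the paper's own proof quite closely: 1) is the immediate consequence of Proposition \ref{PropositionKeyEntropy}; 2) is the same superadditivity estimate on the Cesaro averages combined with part 4) of Theorem \ref{TheoremSubmeasurePushforwardMeromorphic}; 3) is the same Krylov--Bogolyubov construction on $\Gamma _{f,\infty}$ (you make explicit, via a Hahn--Banach lift of a measure $\chi _0\leq \mu _0$, the existence of the starting measure $\nu _0$ with $(\pi _1)_*\nu _0\leq \mu _0$, which the paper takes for granted); and in 4) your exchange of the two infima in $\inf _{\psi \geq f^*\varphi }\inf _n(f_*)^n\mu _0(\psi )$ is a slicker but equivalent version of the paper's two-sided inequality argument. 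The genuine divergence is in 5). The paper first observes that $\sup _{x\in X}\delta _x$ is an invariant majorant of $\mu _0$ (after normalising the mass), so the set $\mathcal{G}$ of invariant majorants is non-empty; it then defines $Inv(\geq \mu _0)$ as the supremum of all positive measures lying below every element of $\mathcal{G}$, uses part 6) of Theorem \ref{TheoremSubmeasurePushforwardMeromorphic} to get $f_*(Inv(\geq \mu _0))\leq Inv(\geq \mu _0)$, and feeds this into part 4) of the present theorem: the decreasing iteration started at $Inv(\geq \mu _0)$ converges to an invariant limit squeezed between $Inv(\geq \mu _0)$ and $\mathcal{G}$, forcing it to equal $Inv(\geq \mu _0)$ and to be the minimum. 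You instead iterate $f_*$ transfinitely from $\mu _0$, taking pointwise suprema at limit ordinals; the chain is non-decreasing, preserves the property $f_*\mu \geq \mu$ (at a limit $\lambda$ because $f_*\mu '_{\lambda}\geq f_*\mu '_{\alpha}=\mu '_{\alpha +1}\geq \mu '_{\alpha}$ for all $\alpha <\lambda$), keeps norm equal to $||\mu _0||$, and stabilises at a countable ordinal by separability of $C^0(X)$ together with the equi-Lipschitz bound; minimality is then a transfinite induction. Both routes are correct, and both correctly identify the obstruction that the $\omega$-step limit only satisfies $f_*\mu _{\infty}\geq \mu _{\infty}$ because $f_*$ need not commute with suprema for meromorphic $f$. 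Your argument is constructive from $\mu _0$ and does not rely on verifying that $\sup _{x\in X}\delta _x$ is $f_*$-invariant; the paper's stays within sequential limits and recycles its own part 4), at the cost of the non-emptiness witness and of part 6) of Theorem \ref{TheoremSubmeasurePushforwardMeromorphic}.
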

\begin{proof}
1) This follows immediately from Proposition \ref{PropositionKeyEntropy}. 

2) We note that for $\mu _n=\frac{1}{n}\sum _{j=0}^n(f_*)^j(\mu _0)$, by part 7 of Theorem \ref{TheoremSubmeasurePushforwardMeromorphic} we have
\begin{eqnarray*}
f_*(\mu _n)-\mu _n&=&f_*(\frac{1}{n}\sum _{j=0}^n(f_*)^j(\mu _0)) -  \frac{1}{n}\sum _{j=0}^n(f_*)^j(\mu _0)\\
&\geq& \frac{1}{n}\sum _{j=0}^n(f_*)^{j+1}(\mu _0)-\frac{1}{n}\sum _{j=0}^n(f_*)^j(\mu _0)\\
&=&\frac{1}{n}(f_*)^{n+1}(\mu _0)-\frac{1}{n}\mu _0,
\end{eqnarray*}
and the latter converges to $0$ in $SM(X)$. Therefore, if $\mu =\lim _{j\rightarrow\infty}\mu _{n_j}$, then any cluster point of $f_*(\mu _{n,j})$ is $\geq \mu$.  Hence, by part 4 of Theorem \ref{TheoremSubmeasurePushforwardMeromorphic} we have $f_*(\mu)\geq \mu$. 

3) Choose any non-zero measure $\nu _0$ on $\Gamma _{f,\infty }$ so that $(\pi _1)_*(\nu _0)\leq \mu$. Then  by Proposition  \ref{PropositionKeyEntropy} we have $$(\pi _1)_*(\phi _f)_*(\nu _0)\leq f_*(\pi _1)_*(\nu _0)\leq f_*(\mu) =\mu.$$
Hence any cluster point $\nu$ of the Cesaro's averages $\frac{1}{n}\sum _{j=0}^n(\phi _f)_*(\nu _0)$ will satisfy $(\pi _1)_*(\nu )\leq \mu$. Since $\phi _f$ is continuous, it follows that $(\phi _f)_*(\nu )=\nu$. If we define $Inv(\pi _1,\mu)=\sup \{\hat{\mu} \in SM^+(\Gamma _{f,\infty}):~(\pi _1)_*(\hat{\mu})\leq \mu ,~(\phi _f)_*(\hat{\mu} )= \hat{\mu}\}$, then it is also an element of $SM^+(\Gamma _{f,\infty})$. Moreover, since $\phi _f$ is continuous, we can check easily that $Inv(\pi _1,\mu)$  is $\phi _f$-invariant. 

4) Since $\mu _n=(f_*)^n(\mu _0)$ is a decreasing sequence, it has a limit which we denote by $Inv(\geq \mu _0)$, which is an element of $SM^+(X)$. Moreover, the sequence $f_*(\mu _n)=\mu _{n+1}$  also converges to $Inv(\geq \mu _0)$. We then have that $f_*(Inv(\geq \mu _0))\geq Inv(\geq \mu _0)$ by part 4 of Theorem \ref{TheoremSubmeasurePushforwardMeromorphic}. On the other hand, since $Inv (\geq \mu _0)\leq \mu _n$ for all $n$, it follows that $f_*Inv (\geq \mu _0)\leq f_*(\mu _n)$ for all $n$, and hence $f_*Inv (\geq \mu _0)\leq \lim _n f_*(\mu _n)=Inv (\geq \mu _0)$. Combining all inequalities we obtain $f_*(Inv (\geq \mu _0))=Inv (\geq \mu _0)$. 

To finish the proof, we will show that if $\mu \leq \mu _0$ and $f_*(\mu )=\mu$, then $\mu \leq Inv (\geq \mu _0)$. In fact, under the assumptions about $\mu$ we have
\begin{eqnarray*}
\mu =(f_*)^n(\mu) \leq (f_*)^n(\mu _0)
\end{eqnarray*}
for all positive integers $n$. Hence, by taking the limit we obtain that $\mu \leq Inv (\geq \mu _0)$.

 5) We can assume that the mass of $\mu _0$ is $1$. The set $\mathcal{G}:=\{\mu \in SM^+(X):~ \mu \geq \mu _0,~f_*(\mu )= \mu \}$ is non-empty because $\sup _{x\in X}\delta _x$ is one of its elements.

Now let $\mathcal{H}=\{\nu\in M^+(X):~\nu \leq \mu, ~\forall \mu \in \mathcal{G}\}$. Note that $\mathcal{H}$  is non-empty because if $\nu \leq \mu _0$, then $\nu \in \mathcal{H}$. We define $Inv(\geq \mu _0)=\sup _{\nu \in \mathcal{H}}\nu$. Then $Inv(\geq \mu _0)$ is in $SM^+(X)$, and it is the largest element of $SM^+(X)$ which is smaller than $\mu$ for all $\mu \in \mathcal{G}$. Moreover, by construction we see easily that $Inv(\geq \mu _0)\geq \mu _0$. 

We now finish the proof by showing that $f_*(Inv(\geq \mu _0))=Inv(\geq \mu _0)$. First, we show that $Inv(\geq \mu _0)\geq f_*(Inv(\geq \mu _0))$. In fact, it is easy to check that if $\nu \in M^+(X)$ is so that $\nu \leq Inv(\geq \mu _0)$, then $\nu \in \mathcal{H}$. Hence, by part 6 of Theorem \ref{TheoremSubmeasurePushforwardMeromorphic} we have that for all $\mu \in \mathcal{G}$
\begin{eqnarray*}
f_*(Inv(\geq \mu _0))=\sup _{\nu \in \mathcal{H}}f_*(\nu )\leq f_*(\mu)=\mu . 
\end{eqnarray*}
Therefore, by the definition of $Inv(\geq \mu _0)$, we get that $f_*(Inv(\geq \mu _0))\leq Inv(\geq \mu _0)$. Therefore, by part 4 above, we have that if $\mu _{\infty}$ is the limit point of $\mu _n=(f_*)^n(Inv(\geq \mu _0))$, then $f_*(\mu _{\infty})=\mu _{\infty}$. Moreover $Inv(\geq \mu _0)\geq \mu _{\infty}$.

On the other hand, from the fact that $Inv(\geq \mu _0)\geq \mu _0$ and $f_*(\mu _0)\geq \mu _0$ we have
\begin{eqnarray*}
\mu _n =(f_*)^n(Inv(\geq \mu _0))\geq (f_*)^n(\mu _0)\geq \mu _0,
\end{eqnarray*}
for all $n$. Taking the limit we obtain $\mu _{\infty}\geq \mu _0$, that is $\mu _{\infty}\in \mathcal{G}$. Hence, by definition $\mu _{\infty}\geq Inv(\geq \mu _0)$. 

From the above two inequalities, we deduce that $ Inv(\geq \mu _0) =\mu _{\infty}$, which implies that $ Inv(\geq \mu _0)$ is the smallest element of $\mathcal{G}$.
\end{proof}

\begin{proof}[Proof of Proposition \ref{PropositionMotivationMeasureEntropy}] Let $\hat{\mu}\in SM^+(\Gamma _{f,\infty})$ such that $(\pi _1)_*(\hat{\mu})={\mu}$.  Since $\mu $ has no mass on $I_{\infty}(f)$, it follows that $\hat{\mu}$ has no mass on $\pi _1^{-1}(I_{\infty}(f))$. Therefore, the mass of $\hat{\mu}$ is concentrated on the set $\{(x,f(x),f^2(x),\ldots ):~x\in X\backslash I_{\infty}(f)\}$. Since $\pi _1$ is an isomorphism on $X\backslash I_{\infty}(f)$, it follows that $\hat{\mu}$ is $\phi _f$-invariant and 
\begin{eqnarray*}
h_{\mu}(f)=h_{\hat{\mu}}(\phi _f). 
\end{eqnarray*} 

Now if $\nu$ is $\phi _f$-invariant and $(\pi _1)_*(\nu )\leq \mu$, then $(\pi _1)_*(\nu)$ has no mass on $I_{\infty}(f)$ and hence the same argument as above shows that $(\pi _1)_*(\nu)$ is $f$-invariant and
\begin{eqnarray*}
h_{\nu}(\phi _f)=h_{(\pi _1)_*(\nu)}(f)\leq h_{\mu}(f).
\end{eqnarray*}
\end{proof}

Now we conclude this subsection with some remarks on the measure entropy $h_{\mu}(f)$, for $f$-invariant positive strong submeasures $\mu$. First, by the Variational Principle for the continuous map $\phi _f$ and the definition of $h_{\mu}(f)$, we have that $h_{\mu}(f)\leq h_{top}(f)$, and the latter is bounded from above by dynamical degrees of $f$ by \cite{dinh-sibony3}. Also, by the properties proven above, it is not hard to see that $\max _{\mu}h_{\mu}(f)=h_{top}(f)$, where $\mu$ runs all over $f$-invariant positive strong submeasures of mass $1$.   

\section{Applications to transcendental maps on $\mathbb{C}$ and $\mathbb{C}^2$}

While ergodic properties (entropy, invariant measures) of meromorphic maps on $\mathbb{C}$ and $\mathbb{C}^2$ are intensively studied, it seems that these properties are very rarely discussed or explicitly computed for transcendental maps (such as $f(x)=ae^x+b$) in the literature. (Other properties, though, such as Fatou and Julia sets of transcendental holomorphic maps on $\mathbb{C}$, are extensively studied in the literature by many researchers, starting from the work Fatou \cite{fatou}.) In this section, we show that the same ideas used in the previous sections can be applied to this. 

\subsection{The case of transcendental maps on $\mathbb{C}$}

Consider the case of a dominant holomorphic map $f:\mathbb{C}\rightarrow \mathbb{C}$. We let $X=\mathbb{P}^1$. While $f$ cannot be extended to a meromorphic map on $X$ if $f$ is transcendental, the set $U=\mathbb{C}$ where $f$ is defined is a dense open set of $X$, and its image $f(U)$ contains an open dense subset of $X$. Therefore, we can use the results in Sections 2 and 4 to define the pushforward of positive strong submeasures on $X$. (Remark: In contrast, it is in general unknown how to pullback positive strong submeasures on $X$, since $f$ does not have finite fibres.)

 We can then define $\Gamma _{f,\infty}\subset X^{\mathbb{N}}$ and an associated continuous map $\phi _f:\Gamma _{f,\infty}\rightarrow \Gamma _{f,\infty}$ as before, and define the topological entropy by the formula
 \begin{eqnarray*}
 h_{top}(f):=h_{top}(\phi _f).
 \end{eqnarray*} 
We conjecture that if $f$ is a generic transcendental map then $h_{top}(f)$ is infinity. This is to conform with the fact that the dynamical degree of $f$, if it can be defined in this case, should be $+\infty$. 

Now we discuss in more detail the pushforward of a positive measure on $X$ by a transcendental map $f:X\rightarrow X$. Let $x_0$ be the point at infinity of $X=\mathbb{P}^1$. Let $\mu$ be a probability measure on $X$. Then we can write $\mu =\mu _0+a \delta _{x_0}$, where $0\leq a\leq 1$. Then $f_*(\mu _0)$ is well-defined as a measure. Since $f$ is transcendental, it follows by Picard's theorem $f(V\cap \mathbb{C})$ is dense in $X$ for all open neighbourhood $V$ of $x_0$. From this we obtain that $f_*(\delta _{x_0})=\mu _X:=\sup _{x\in X}\delta _x$. We obtain the formula: 
\begin{eqnarray*}
f_*(\mu )=f_*(\mu _0)+a \mu _X. 
\end{eqnarray*}  
We note that $f_*(\mu _X)=\mu _X$. 

It follows that if $\mu _{\infty}$ is a cluster point of the Cesaro's averages $\frac{1}{n}\sum _{j=0}^n(f_*)^j(\mu)$ converges, where $\mu$ is a probability measure, then we can write $\mu _{\infty}=\widetilde{\mu _{\infty}}+b\delta _{x_0} +a \mu _X$.  Here $\widetilde{\mu _{\infty}}$ has no mass on $x_0$, $a,b\geq 0$, $a+b\leq 1$. It can be checked easily that $\widetilde{\mu _{\infty}}$ is $f$-invariant. Hence $f_*(\mu _{\infty})=\widetilde{\mu _{\infty}}+(a+b)\mu _X$ is $f$-invariant.  

By Picard's theorem for essential singularities of one complex variable, if $\nu$ is a measure with no mass on $x_0$ and is $f$-invariant, then the support of $\nu$ has at most $2$ points. Therefore $h_{\nu}(f)=0$.  Hence, $f$ has no interesting invariant measure with no mass on $x_0$.  The above calculation then shows that the only interesting invariant positive strong submeasure for $f$ is $\mu _X$. 

\subsection{The case of transcendental maps on $\mathbb{C}^2$}

Now, we can apply the above arguments for the case of a transcendental map $f:\mathbb{C}^2\rightarrow \mathbb{C}^2$. The only caveat is that now we have not only one, but many, compact K\"aler surfaces $X$ containing $\mathbb{C}^2$ as an open dense set. We denote by $\mathcal{K}(\mathbb{C}^2)$ the set of all such compact K\"ahler surfaces $X$. For each $X\in \mathcal{K}(\mathbb{C}^2)$, we denote by $f_X$ the associated one on $X$. Then following the idea in \cite{truong3}, we will define a notion of K\"ahler topological entropy for $f$ as follows: 
\begin{eqnarray*}
h_{top, \mathcal{K}}(f):=\inf _{X\in \mathcal{K}(\mathbb{C}^2)}h_{top}(f_X). 
\end{eqnarray*}
Here $h_{top}(f_X)$ is, as before, defined using the infinity graph $\Gamma _{f_X,\infty}$, which is the closure of $\{(x,f(x),f^(x),\ldots ):~x\in \mathbb{C}^2\}$ in $X^{\mathbb{N}}$, and the shift map $\phi _{f_X}(x_1,x_2,\ldots )=(x_2, x_3,\ldots )$.

We again conjecture that if $f$ is a generic transcendental map then $h_{top, \mathcal{K}}(f)=\infty$. For a generic transcendental map, probably we can show that $f(U)$ is dense in $\mathbb{C}^2$ for all open neighborhood of a point at infinity. In this case, for any $X\in \mathcal{K}(\mathbb{C}^2)$ and  a positive measure $\mu$ on $X$, which is decomposed as $\mu =\mu _0+\mu _1$, where $\mu _0$ has mass only in $\mathbb{C}^2$ and $\mu _1$ has only mass in $X\backslash \mathbb{C}^2$: 
\begin{eqnarray*}
f_*(\mu )=f_*(\mu _0)+a \mu _X,
\end{eqnarray*} 
where $\mu _X=\sup _{x\in X}\delta _x$. Thus we have a similar behaviour of the pushforward of positive measures across all the compactifications $X$. This is in contrast to the case of meromorphic maps in $\mathbb{C}^2$ as we mentioned previously.


\begin{thebibliography}{xx} 

\bibitem{adler-konheim-mcandrew}
R. L. Adler, A. G. Konheim and M. H. McAndrew,   \textit{Topological entropy}, Trans. Amer. Math. Soc. 114 (1965), 309--319.

\bibitem{ahag-cegrell-czyz-pham}{P. Ahag, U. Cegrell, R. Czyz and P. H. Hiep,} \textit{Monge-Ampere measures on pluri-polar sets,} J. Math. Pures Appl. 92 (2009), 613--627.

\bibitem{anderson}{M. Andersson,} \textit{Residues of holomorphic sections and Lelong currents,} Ark. Math. 43 (2005), 201--219.

 \bibitem{anderson-blocki-wulcan}{M. Andersson, Z. Blocki and E. Wulcan,} \textit{On a Monge-Ampere operator for plurisubharmonic functions with analytic singularities,} Indiana Math. J to appear. arXiv: 1705.2907. 

\bibitem{anderson-wulcan}{M. Andersson and E. Wulcan,} \textit{Green functions, Segre numbers, and King's formula,} Ann. Inst. Fourier 64 (2014), 2639--2657. 

\bibitem{baire}{R. Baire,} \textit{Lesons sur les fonctions discontinues, profess\'ees au coll\`ege de France,} Gauthier-Villars (1905).

\bibitem{bedford-taylor2}{E. Bedford and B. A. Taylor,} \textit{Fine topology, Shilov boundary, and $(dd^c)^n$,} J. Func. Anal. 72 (1987), 225--251.

\bibitem{bedford-taylor}{E. Bedford and B. A. Taylor,} \textit{The Dirichlet problem for a complex Monge - Ampere equation,} Invent. Math. 37 (1976), no 1, 1--44.

\bibitem{boucksom-eyssidieux-guedj-zeriahi}{S. Boucksom, P. Eyssidieux, V. Guedj and A. Zeriahi,} \textit{Monge-Ampere equations in big cohomology classes,} Acta Math. 205 (2010), 199--262.

\bibitem{buff}{X. Buff,} \textit{Courants dynamiques pluripolaires,} Ann. Toulouse Math. 20 (2011), no 1, 203--214.

\bibitem{cegrell2}{U. Cegrell,} \textit{The general definition of the complex Monge - Ampere operator,} Ann. Inst. Fourier (Grenoble) 54 (2004), 159--179.

\bibitem{cegrell1}{U. Cegrell,} \textit{Pluricomplex energy,} Acta Math. 180 (1998), 187--217.

\bibitem{celik-poletsky}{H. Celik and E. Poletsky,} \textit{Fundamental solutions of the Monge-Ampere equations,} Ann. Polon. Math. 67 (1997), 103--110.

\bibitem{demailly1}{J.-P. Demailly,} \textit{Complex analytic and differential geometry,} Online book.

\bibitem{demailly}{J.-P. Demailly,} \textit{Regularization of closed positive currents and intersection theory,} J. Algebraic Geom. 1 (1992), 361--409.

\bibitem{demailly2}{J.-P. Demailly,} \textit{Mesures de Monge-Ampere et mesures pluriharmoniques,} Math. Z. 194  (1987), 519--564.

\bibitem{diller-dujardin-guedj3}{J. Diller, R. Dujardin and V. Guedj,} \textit{Dynamics of meromorphic maps with small topological degrees III: geometric currents and ergodic theory,} Ann. Sci. Ecole Norm. Sup. 43 (2010), 235--378.

\bibitem{diller-dujardin-guedj2}{J. Diller, R. Dujardin and V. Guedj,} \textit{Dynamics of meromorphic maps with small topological degrees II: energy and invariant measures,} Comm. Math. Helv. 86 (2011), 277--316.

\bibitem{diller-dujardin-guedj1}{J. Diller, R. Dujardin and V. Guedj,} \textit{Dynamics of meromorphic maps with small topological degrees I: from currents to cohomology,} Indianan U. Math. J. 59 (2010), 521--561.

\bibitem{dinh-nguyen-truong}{T.-C. Dinh, V.-A. Nguyen and T. T. Truong,} \textit{Growth of the number of periodic points of meromorphic maps,} Bull. London Math. Soc. to appear. arXiv: 1601.03910.

\bibitem{dinh-nguyen-truong2}{T.-C. Dinh, V.-A. Nguyen and T. T. Truong,} \textit{Equidistribution for meromorphic maps with dominant topological degree,} Indiana Uni. J. Math. 64 (2015), no 6, 1805--1828.

\bibitem{dinh-sibony2}{T.-C. Dinh and N. Sibony,} \textit{Density of positive closed currents, a theory of non-generic intersection,} J. Alg. Geom. to appear. arXiv: 1203.5810.

\bibitem{dinh-sibony}{T.-C. Dinh and N. Sibony,} \textit{Super-potentials of positive closed currents, intersection theory and dynamics,} Acta Math. 203 (2009), 1--82.

\bibitem{dinh-sibony1}{T.-C. Dinh and N. Sibony,} \textit{Super-potentials for currents on compact K\:ahler manifolds and dynamics of automorphisms,} J. Alg. Geom. 19 (2010), no 3, 473--529.

\bibitem{dinh-sibony3}{T.-C. Dinh and N. Sibony,} \textit{Regularization of currents and entropy,} Ann. Sci. Ecole Norm. Sup. (4), 37 (2004), no 6, 959--971.

\bibitem{doob}{J. L. Doob,} \textit{Measure theory,} Graduate text in mathematics 143, Springer - Verlag, New York 1994.

\bibitem{fatou}{P. Fatou,} \textit{Sur l'iteration des fonctions transcendantes entires,} Acta Math. 47 (1926), vol 4, 337--370.

\bibitem{guedj-zeriahi}{V. Guedj and A. Zeriahi,} \textit{The weighted Monge - Ampere energy of quasi-plurisubharmonic functions,} J. Func. Anal. 250 (2007), 442--482.

\bibitem{fornaess-sibony}{J.-E. Fornaess and N. Sibony,} \textit{Oka's inequality for currents and applications,} Math. Ann.  301 (1995), 399--419.

\bibitem{goodman}{T. N. T. Goodman,} \textit{Relating topological entropy and measure entropy,} Bull. London. Math. Soc. 3 (1971), 176--180.

\bibitem{goodwyn}{L. W. Goodwyn,} \textit{Comparing topological entropy with measure-theoretic entropy,} American Journal of Mathematics, vol 94, no 2 (April 1972), 366--388.

\bibitem{guedj}{V. Guedj,} \textit{Entropie topologique des applications m\'eromorphes,} Ergodic Theory and Dynamical Systems, vol 25, no 6, 1847--1855.

\bibitem{kolodziej}{S. Kolodziej,} \textit{The complex Monge - Ampere equation,} Acta Math. 180 (1998), 69--117.

\bibitem{lelong}{P. Lelong,} \textit{Fonction de Green pluricomplexe et lemmes de Schwartz dans les spaces Banach,} J. Math. Pures Appl. 68 (1989), 319--347.

\bibitem{lempert1}{L. Lempert,} \textit{La metric de Kobayashi et la representation des domaines sur la boulle,} Bull. Soc. Math. France 109 (1981), 427--474.

\bibitem{lempert}{L. Lempert,} \textit{Solving the degenerate Monge-Ampere equation with one concentrated singularity,} Math. Ann. 263 (1983), 515--532.

\bibitem{rudin2}{W. Rudin,} \textit{Functional analysis,} 2nd edition, Mc Graw - Hill, New York 1991.

\bibitem{rudin}{W. Rudin,} \textit{Real and complex analysis,} 3rd edition, Mc Graw - Hill, New York 1987.

\bibitem{sadullaev}{A. Sadullaev,} \textit{Plurisubharmonic functions,} in G. M. Khenkin, A. G. Vitushkin (eds.),  Several complex variables II, Encyclopaedia of mathematical sciences, vol 8, Springer, Berlin Heidelberg (1994).

\bibitem{talagrand}{M. Talagrand,} \textit{Maharam's problem,} Ann. of Math. 168 (2008), 981--1009.

\bibitem{truong3}{T. T. Truong,} \textit{Etale dynamical systems and topological entropy,}  arXiv: 1607.07412. Accepted in Proc. AMS. 

\bibitem{truong}{T. T. Truong,} \textit{Least negative intersections of positive closed currents on compact K\"ahler manifolds,} arXiv: 1404.2875.

\bibitem{truong2}{T. T. Truong,} \textit{Some dynamical properties of pseudo-automorphisms in dimension 3,} Tran. Am. Math. Soc. 368 (2016), no 1, 727--753.

\bibitem{truong1}{T. T. Truong,} \textit{Pullback of currents by meromorphic maps,} Bull. Soc. Math. France 141 (2013), no 4, 517--555.

\bibitem{xing}{Y. Xing,} \textit{Complex Monge-Ampere equations with a countable number of singular points,} Indiana. Uni. Math. J. 48 (1999), 749--765.

\bibitem{zeriahi}{A. Zeriahi,} \textit{Pluricomplex Green functions and the Dirichlet problem for the complex Monge-Ampere operator,} Michigan Math. J. 44 (1997), 579--596.

\end{thebibliography}
\end{document}